\def\XXint#1#2#3{{\setbox0=\hbox{$#1{#2#3}{\int}$ }
\vcenter{\hbox{$#2#3$ }}\kern-.6\wd0}}
\DeclarePairedDelimiter\abs{\lvert}{\rvert}
\def\R{\mathbb R}
\def\H{\mathcal H}
\def\Om{\Omega}
\def\N{\mathbb N}
\def\e{\varepsilon}
\newcommand{\D}{\nabla}
\newcommand{\dd}{\partial}
\newcommand{\norm}[1]{\left\lVert#1\right\rVert}
\newtheorem{theorem}{Theorem}[section]
\newtheorem{lemma}[theorem]{Lemma}
\newtheorem{remark}[theorem]{Remark}
\newtheorem{definition}[theorem]{Definition}
\newtheorem{Notation}[theorem]{Notation}
\begin{document}
\title
[Regularity Results for an Optimal Design Problem \dots]{Regularity Results for an Optimal Design Problem with lower order terms}


\author{L. Esposito}
\address{Dipartimento di Matematica, Via Giovanni Paolo II, 132, 84084 Fisciano (SA)}\email{luesposi@unisa.it}
\author{L. Lamberti}
\email{llamberti@unisa.it}

\begin{abstract}
We study the regularity of the interface for optimal energy configurations of functionals involving  bulk energies with an additional perimeter penalization of the interface. It is allowed the dependence on $(x,u)$ for the bulk energy. For a minimal configuration $(E,u)$, the H\"{o}lder continuity of $u$ is well known. We give an estimate for the singular set of the boundary $\partial E$. Namely we show that the Hausdorff dimension of the singular set is strictly smaller than $n-1$.
\end{abstract}

\maketitle


\bigskip

\subjclass{\noindent { 2010 Mathematics Subject Classification:} 49Q10, 49N60, 49Q20}

\section{Introduction and statements}
In this paper we will deal with energy functionals of the type
\begin{equation}\label{intro0}
{\mathcal F}(E,u;\Omega)=\int_\Om\ \bigl [ F(x,u,\nabla u)+\mathbbm{1}_{E}G(x,u,\nabla u)\bigr ]\,\,dx +P(E,\Om)\,,
\end{equation}
where $u\in H^1(\Omega)$ and $\mathbbm{1}_{E}$ denotes the characteristic function of a set $E\subset \Omega$ with finite perimeter in $\Omega$, denoted as $P(E,\Omega)$.
In mathematical and physical literature, the problem of finding the minimal energy configuration of a mixture of two materials in a bounded connected open set $\Omega \subset \mathbb{R}^n$ has been widely investigated (see for instance \cite{AB}, \cite{AC}, \cite{FFLM}, \cite{Gur}, \cite{Lar}, \cite{Lin}). The energy functional employed in such problems involves both bulk and interface energies in order to describe a large class of phenomenon in many applied sciences, such as non linear
elasticity, material sciences and image segmentations in the computer vision. A relevant case deeply studied by several authors  is the following model functional,
\begin{equation}\label{model}
\int_{\Omega}\sigma_{E}(x)|\nabla u|^2\,dx+ P(E,\Omega),
\end{equation}
where $u=u_0$ is prescribed on $\partial \Omega$ and $\sigma_{E}(x)=\beta \mathbbm{1}_{E}+\alpha \mathbbm{1}_{\Omega\setminus E}$, with $0<\alpha<\beta$ given constant.

In $1993$ L. Ambrosio and G. Buttazzo in \cite{AB} proved that, if $(E,u)$ is a minimizer of the functional $\eqref{model}$, then $u$ is locally H\"{o}lder continuous in $\Omega$ and $E$ is relatively open in $\Omega$. In the same year F.H. Lin  proved the regularity of the interface (see \cite{Lin}), that is $\partial E$ is regular outside a relatively closed set of vanishing $\mathcal{H}^{n-1}$-measure.
To be more precise, we define the set of regular points of $\partial E$ as follows:
\begin{equation}
\mbox{Reg}(E):= \left\{x\in \partial E\cap \Omega:\ \partial E \mbox{ is a }C^{1,\gamma} \mbox{ hypersurface in some } I(x) 
\right\}
\end{equation}
where $I(x)$ denotes a neighborhood of $x$. Accordingly, we define the set of singular points of $\partial E$
\begin{equation}
\Sigma(E) := (\partial E \cap \Omega)\setminus \mbox{Reg}(E).
\end{equation}
In the paper by F.H. Lin (see \cite{Lin}) it was proved that $\mathcal {H}^{n-1}(\Sigma(E))=0$ for minimal configurations of the functional $\eqref{model}$. More recently G. De Philippis and A. Figalli in \cite{DF}, Fusco and Julin in \cite{FJ}, independently and by different approaches, were able to improve this result indeed proving that
\begin{equation}
\label{redu}
dim_{\mathcal{H}}(\Sigma(E))\leq s,
\end{equation}
for some $s<n-1$ depending only on $\alpha, \beta$. Regarding this dependence, it is worth noticing that in \cite{EF} it has been proven, assuming $\alpha=\beta$ and $\beta\leq \gamma$ with $\gamma=\gamma(n)$, that $u\in C^{0,\frac{1}{2}+\varepsilon}$, $\partial^{*} E$ is a $C^{1,\varepsilon}-$hypersurface and $\mathcal{H}^s(\partial E\setminus \partial^* E)=0$ for all $s>n-8$.

In 1999 F.H. Lin and R.V. Kohn in [LK] treated a more general quadratic bulk energy of the type $\eqref{intro0}$ actually proving the same regularity they proved for the model case $\eqref{model}$. As a matter of fact they proved that the singular part $\Sigma(E)$ has vanishing $\mathcal{H}^{n-1}$-measure for minimal configurations $(E,u)$. 

In this paper we address the issue of improving the dimensional estimate for the singular part $\Sigma(E)$ of optimal configurations for the class of functionals treated by Lin and Kohn. As a matter of fact we prove, for a wide class of quadratic functional depending also on $x$ and $u$,
the same kind of regularity proved in the model case $\eqref{model}$ by [DF] and [FJ] namely, 
$ dim_{\mathcal{H}}(\Sigma(E))\leq s$ for some $s<n-1$.

Our path to prove the aforementioned result basically follows the same strategy used in [FJ]. As we will point out later in more detail, our technique relies on the linearity of the Euler equation of the functional $\eqref{intro0}$. For this reason we need a linear structure condition for the bulk energy. In the rest of the paper we will assume that the density energies $F$ and $G$ in $\eqref{intro0}$ satisfy the following assumptions:

\begin{eqnarray}\label{structure1}
&&F(x,s,z)=\sum_{i,j=1}^n a_{ij}(x,s)z_iz_j+\sum_{i=1}^n a_i(x,s)z_i+a(x,s),\\
\label{structure2}
&&G(x,s,z)=\sum_{i,j=1}^n b_{ij}(x,s)z_iz_j+\sum_{i=1}^n b_i(x,s)z_i+b(x,s).
\end{eqnarray}
Concerning the coefficients we assume that
\begin{equation*}
a_{ij},b_{ij},a_i,b_i,a,b\in C^{0,1}(\Omega\times\R).
\end{equation*} 
We will denote by $L_D$ the greatest lipschitz constant of the data 
$a_{ij},b_{ij}, a_i,b_i,a,b$, that is
\begin{equation}\label{Hoelderianity}
|\D a_{ij}|\leq L_D,\;\;|\D b_{ij}|\leq L_D \;\;\mbox{ in }\Omega\times\R,
\end{equation}
and the same holds true for $a_i,b_i,a,b$.\\
Moreover to ensure the existence of minimizers we assume the boundedness of the coefficients and the ellipticity of the matrices $a_{ij}$ and $b_{ij}$,
\begin{eqnarray}\label{ellipticity1}
&&\nu|\xi|^2\leq a_{ij}(x,s)\xi_i\xi_j\leq N |\xi|^2,\;\;\;\nu |\xi|^2\leq b_{ij}(x,s)\xi_i\xi_j\leq  N |\xi|^2\\
\label{ellipticity2}
&& \sum_{i=1}^n |a_i(x,s)|+\sum_{i=1}^n |b_i(x,s)|+|a(x,s)|+|b(x,s)|\leq L
\end{eqnarray}
We are interested in the regularity of minimizers of the following constrained problem.

\begin{definition} In the sequel we shall denote by $\eqref{P_c}$ the constrained problem
\begin{equation}
\label{P_c}
\min_{\substack{E\in \mathcal{A}(\Omega)\\v\in u_0+H_0^1(\Omega)}}
\left\{
\mathcal{F}(E,v;\Omega)\,:\,  |E|=d
\right\}
\tag{$P_c$}
\end{equation}
where $u_0\in H^1(\Omega)$, $0<d<|\Omega|$ are given and $\mathcal{A}(\Omega)$ is the class of all subsets of $\Omega$ with finite perimeter.
\end{definition}
The problem of handling with the constraint $|E|=d$ is overtaken using an argument introduced in \cite{EF}, ensuring that every minimizer of the constrained problem $\eqref{P_c}$ is also a minimizer of a penalized functional of the type
\begin{equation}\label{intro1}
{\mathcal F}_{\Lambda}(E,v;\Omega)=\mathcal{F}(E,v;\Omega)+\Lambda\big||E| -d\big|,
\end{equation}
for some suitable $\Lambda>0$ (see Theorem \ref{Teorema Penalizzazione} below). Therefore we give in addition the following definition.
\begin{definition}

In the sequel we shall denote by $\eqref{P}$ the penalized problem
\begin{equation}
\label{P}
\min_{\substack{E\in \mathcal{A}(\Omega)\\v\in u_0+H_0^1(\Omega)}}
\mathcal{F}_{\Lambda}(E,v;\Omega)
\tag{$P$}
\end{equation}
where $u_0\in H^1(\Omega)$, is given, and $\mathcal{A}(\Omega)$ is the class of all subsets of $\Omega$ with finite perimeter in $\Omega$.
\end{definition}
From the point of view of regularity, the extra term $\Lambda\big||E| -|F|\big|$ is a higher order negligible perturbation.
The main result of the paper is contained in the following theorem.
\begin{theorem}
\label{Teorema principale}
Let $(E,u)$ be a minimizer of either problem $\eqref{P_c}$ or problem $\eqref{P}$, under assumptions $\eqref{structure1}-\eqref{ellipticity2}$. Then
\begin{itemize}
\item[a)] there exists a relatively open set $\Gamma\subset \partial E$ such that $\Gamma$ is a $C^{1,\mu}$ hypersurface for all $0<\mu<\frac{1}{2}$,
\item[b)] there exists $\varepsilon >0$ depending on $\nu,N,L,n$, such that $$\mathcal{H}^{n-1-\varepsilon}((\partial E\setminus \Gamma)\cap \Omega)=0.$$
\end{itemize}
\end{theorem}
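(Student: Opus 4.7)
The plan is to follow the Fusco--Julin strategy of \cite{FJ}, adapted to the variable-coefficient, lower-order setting dictated by the structure conditions \eqref{structure1}--\eqref{structure2}. First, by Theorem \ref{Teorema Penalizzazione}, every minimizer of \eqref{P_c} is a minimizer of \eqref{P} for a suitable $\Lambda$, so we reduce once and for all to the penalized problem and treat the volume term $\Lambda\big||E|-d\big|$ as a higher-order perturbation. Standard arguments based on the quasi-minimality of $u$ for a bounded-measurable-coefficient quadratic functional (as in Ambrosio--Buttazzo \cite{AB} and Lin--Kohn) give local H\"older continuity of $u$, while comparison of $E$ with $E\cup B_r(x)$ and $E\setminus B_r(x)$ yields density lower bounds for $E$ and $\Omega\setminus E$ and the perimeter estimate $\mathcal H^{n-1}(\partial E\cap B_r(x))\ge c r^{n-1}$ at every boundary point.

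Part (a) is then reduced to an excess decay estimate. Define the spherical excess
\[
e(x,r):=\frac{1}{r^{n-1}}\int_{\partial^{*} E\cap B_r(x)}\frac{|\nu_E(y)-(\nu_E)_{x,r}|^2}{2}\,d\mathcal H^{n-1}(y),
\]
together with a Dirichlet-type excess $D(x,r)$ measuring the oscillation of $\nabla u$ on $B_r(x)$. The goal is to show that if $e(x,r)+D(x,r)$ is sufficiently small, then $e(x,\tau r)\le C\tau^{2\mu}(e(x,r)+D(x,r))$ for every $\mu<\tfrac12$ and sufficiently small $\tau$. The natural route is to freeze the coefficients at $(x,u(x))$, using the Lipschitz bound \eqref{Hoelderianity} and the H\"older continuity of $u$ to absorb the resulting defect into an $O(r^{2\mu})$ term, and then invoke the De Giorgi-type flatness improvement for the constant-coefficient model functional $\eqref{model}$. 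The lower-order contributions $a_iz_i+a$ and $b_iz_i+b$ are harmless thanks to \eqref{ellipticity2} and contribute an $O(r)$ error. Iteration then gives that $\partial^{*} E\cap\Omega$ is locally $C^{1,\mu}$ for every $\mu<\tfrac12$, and the density estimates identify it with the relatively open $\Gamma$ of the statement.

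For part (b) one must sharpen the above into a quantitative excess improvement in the spirit of \cite{FJ}. Here the linearity of the Euler--Lagrange equation for $u$, built into \eqref{structure1}--\eqref{structure2}, is essential: when $\partial E$ is $e$-close to a flat interface inside $B_r(x)$, the function $u$ nearly solves a linear constant-coefficient transmission problem across that interface, and the two-phase elliptic regularity for such problems forces $D(x,r)$ to decay faster than the excess itself. Combined with the previous step, this upgrades the iteration to $e(x,\tau r)\le \tau^{1+\delta}e(x,r)$ at scales where the singular set lives, for some $\delta=\delta(\nu,N,L,n)>0$. A Federer-type covering / porosity argument then promotes the Lin--Kohn bound $\mathcal H^{n-1}(\Sigma(E))=0$ to the quantitative estimate $\dim_{\mathcal H}(\Sigma(E))\le n-1-\varepsilon$, where $\varepsilon=\varepsilon(\nu,N,L,n)$.

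The main obstacle is the quantitative improvement underlying part (b): because $a_{ij}$ and $b_{ij}$ depend on $u$ and $u$ is only H\"older continuous, the error produced by freezing the coefficients at $(x,u(x))$ is only $O(r^{2\mu})$ with $\mu<\tfrac12$, so the $\tau^{1+\delta}$ gain coming from the linear transmission problem must uniformly beat this coefficient oscillation at every scale. Making this balance quantitative, through a careful choice of $\tau$ and of the smallness regime on $e+D$, is the core technical point, and it is also what forces the exponent restriction $\mu<\tfrac12$ in part (a). Once this is in place, the remainder of the argument is a routine adaptation of \cite{FJ} to the presence of the lower-order terms in \eqref{structure1}--\eqref{structure2} and of the penalization $\Lambda\big||E|-d\big|$.
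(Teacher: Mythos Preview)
Your overall architecture for part (a) is broadly in line with the paper: reduce to the penalized problem via Theorem~\ref{Teorema Penalizzazione}, establish density bounds, and run an excess-improvement iteration in which the Dirichlet energy $\mathcal{D}(x,r)$ is controlled through the transmission-problem decay of Lemma~\ref{Lemma decadimento 1} (case~(iii)). The paper implements this via a height bound, Lipschitz approximation, reverse Poincar\'e and the weak Euler--Lagrange equation, arriving at the combined decay
\[
\mathbf{e}(x_0,\tau r)+\mathcal{D}(x_0,\tau r)+\tau r\leq C_6\tau\bigl(\mathbf{e}(x_0,r)+\mathcal{D}(x_0,r)+r\bigr),
\]
which iterates to the $C^{1,\mu}$ claim for all $\mu<\tfrac12$.

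There is, however, a genuine gap in your mechanism for part (b). You propose to obtain the dimension drop by upgrading the excess iteration to $\mathbf{e}(x,\tau r)\leq \tau^{1+\delta}\mathbf{e}(x,r)$ and then applying a ``Federer-type covering/porosity'' argument. This is not how the bound $\dim_{\mathcal H}(\Sigma(E))<n-1$ is obtained, either here or in \cite{FJ}. The excess-improvement inequality only applies once $\mathbf{e}+\mathcal{D}+r$ is already small; at a singular point this smallness may fail at every scale, and no amount of improved decay inside the small-excess regime bounds the dimension of the set where you never enter that regime. A porosity argument of the type you describe yields $\mathcal H^{n-1}(\Sigma)=0$ (the Lin--Kohn conclusion) but not a strict dimension drop.

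The missing ingredient is the \emph{higher integrability} of $\nabla u$ (Lemma~\ref{Lemma maggiore sommabilità}): Gehring's lemma gives $\nabla u\in L^{2s}_{loc}(\Omega)$ for some $s=s(\nu,N,L,n)>1$, and a standard measure-theoretic fact then implies
\[
\dim_{\mathcal H}\Bigl\{x\in\Omega:\ \limsup_{r\to 0}\mathcal{D}(x,r)>0\Bigr\}\leq n-s.
\]
The singular set is split accordingly: points where $\mathcal{D}$ does not vanish are handled by this estimate, while at points where $\mathcal{D}(x,r)\to 0$ the blow-ups are perimeter minimizers (the bulk term disappears), and Federer's dimension reduction gives that this residual piece is empty for $n\leq 7$ and of dimension $\leq n-8$ otherwise. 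The exponent $\varepsilon$ in the statement is therefore produced by the higher-integrability gain $s-1$, which is exactly why it depends only on $\nu,N,L,n$. Your proposal does not invoke higher integrability anywhere, and without it the argument for (b) does not close.
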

For reader convenience the paper is structured in sections which reflect the proof strategy. Section $2$ collects known results and preliminary definitions. As in the case of minimizers of the Mumford-Shah functional the proof of regularity is based on the study of interplay between the perimeter and the bulk energy.
We point out that the H\"older exponent $\frac{1}{2}$ is critical for solutions $u$ of either $\eqref{P}$ or $\eqref{P_c}$, in the sense that whenever $u\in C^{0,\frac{1}{2}}$, under appropriate scaling the bulk term  locally have the same dimension $n-1$ as the perimeter term.
In this regard, our starting point is to prove suitable energy decay estimates for the bulk energy. These estimates are contained in section 3. The key point of this approach is contained in Lemma \ref{Lemma decadimento 1}, where it is proved that the bulk energy decay faster than $\rho^{n-1}$, that is, for any $\delta>0$,
\begin{equation}\label{intro2}
\int_{B_{\rho}(x_0)}|\D u|^2\,dx\leq C \rho^{n-\delta},
\end{equation} 
either in the case that 
$$\min\{|E\cap B_{\rho}(x_0)|,|B_{\rho}(x_0)\setminus E|\}<\varepsilon_0|B_{\rho}|,
$$
or in the case that, there exists an half space $H$ such that
$$
|(E\Delta H)\cap B_{\rho}(x_0)|\leq \varepsilon_0|B_{\rho}|.
$$
The latter case is the hardest to handle because it relies on the regularity properties of solutions of a transmission problem which we study in subsection 3.1. 
Let us notice that, for any given $E\subset \Omega$, local minimizers $u$ of the functional
\begin{equation}\label{intro3}
\int_\Om\ \bigl [ F(x,u,\nabla u)+\mathbbm{1}_{E}G(x,u,\nabla u)\bigr ]\,dx
\end{equation}
are H\"older continuous, $u\in C^{0,\alpha}_{loc}(\Omega)$, but the needed bound $\alpha>\frac{1}{2}$ cannot be expected in the general case without any information on the set $E$. In section 3.1 we prove, in the case $E$ is an half space, that
minimizers of the functional \eqref{intro3} are in $C^{0,\alpha}$ for every $\alpha>0$. In this context the linearity of the equation strongly come in to play ensuring that the derivatives of the Euler equation are again solutions of the same equation. For the proof in section $3$ we readapt a technique depicted in the book \cite{AFP} in the context of the Mumford and Shah functional and recently used in a paper by E. Mukoseeva and G. Vescovo, \cite{MV}. 
Once the estimates of section $3$ are obtained, we are ready to prove in section $4$ that, if in a ball $B_{\rho}(x_0)$ the  perimeter of $E$ is sufficiently small, then the total energy 
$$
\int_{B_r(x_0)}|\D u|^2\,dx + P(E,B_{r}(x_0)), \quad \quad 0<r<\rho,
$$
decays as $r^{n}$ (see Lemma \ref{Lemma decadimento 2}). Making use of the latter energy density estimate we are in position to deduce in section $4$ the density lower bound for the perimeter of $E$ as well. In the subsequent sections the proof strategy follows the path traced from the regularity theory for perimeter minimizers. In section 5 it is proved the compactness for sequences of minimizers which more or less follows in a standard way from the density lower bound. Section 6 is devoted to prove some additional consequences of the density lower bound which involve the excess
$$
{\mathbf e}(x,r)=\inf_{\nu\in \mathbb{S}^{n-1}}{\mathbf e}(x,r,\nu):= \inf_{\nu\in \mathbb{S}^{n-1}}\frac{1}{r^{n-1}}\int_{\partial E\cap B_r(x)}\frac{|\nu_E(y)-\nu|^2}{2}d\mathcal H^{n-1}(y).
$$
Actually we prove the height bound Lemma and the Lipschitz approximation Theorem. In this section we also compute the Euler equation for $\mathcal{F}(E,u)$ involving the variation of the set $E$. Section 7 is devoted to prove the excess improvement which follows from the fact that whenever the excess ${\mathbf e}(x,r)$ goes to zero for $r\rightarrow 0$ the Dirichlet integral $\int_{B_{\rho}(x_0)}|\D u|^2\,dx$ decays as in \eqref{intro2}. In section 8 we give the proof of Theorem \ref{Teorema principale} that is a consequence of the excess improvement proved before.
\section{Preliminary notations and definitions}
In the rest of the paper we will write $\langle \xi, \eta \rangle$ for the inner product of vectors $\xi, \eta \in \mathbb{R}^n$, and $|\xi|:=\langle \xi, \xi \rangle^{\frac 12}$ will denote the corresponding Euclidean norm.
If $u$ is integrable in $B_R(x_0)$ we set:
$$
u_R=\frac{1}{\omega_n R^n}\int_{B_R(x_0)} u\,dx = \fint_{B_{R}(x_0)} u\,dx
$$
The following definition is standard.
\begin{definition}
Let $v\in H^1_{loc}(\Omega)$, and assume that $E\subset \Omega$ is fixed. We say that $v$ is a local minimizer of the integral functional $\mathcal{F}$ defined in \eqref{intro0} iff
$$
{\mathcal F}(E,v;B_R(x_0))=\min \left\{{\mathcal F}(E,w;B_R(x_0)):\,\, w\in v+H^1_0(B_R(x_0))
\right\}
$$
for all $B_R(x_0)\subset \subset \Omega$.
\end{definition}
It is clear that any minimizer $u$ of problem $\eqref{P_c}$ is a local minimizer of the functional \eqref{intro0} and therefore satisfies the Euler equation
\begin{eqnarray}\label{E}
&&\sum_{i=1}^{n}
\frac{\partial}{\partial{x_i}}
\Big [ F_{z_i}(x,u,\nabla u)+\mathbbm{1}_{E}G_{z_i}(x,u,\nabla u)\bigr)\Big ]\\
\nonumber && =F_u(x,u,\nabla u)+G_u(x,u,\nabla u)
\end{eqnarray}
\begin{lemma}
\label{Lemma iterativo 1}
Let $Z(t)$ be a bounded non-negative function in the interval $[\rho,R]$ and assume that for $\rho\leq t<s\leq R$ we have
\begin{equation}\label{Z1}
Z(t)\leq \theta Z(s)+\frac{A}{(s-t)^2}+B,
\end{equation}
with $A,B\geq 0$, and $0\leq\theta <1$. Then
\begin{equation}\label{Z2}
Z(\rho)\leq c(\theta)\Bigl[\frac{A}{(R-\rho)^2}+B\Bigr],
\end{equation}
for some $c(\theta)$ depending only on $\theta$.
\end{lemma}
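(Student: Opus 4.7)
The plan is to use a standard hole-filling iteration: build an increasing sequence $\rho = t_0 < t_1 < t_2 < \dots$ converging to $R$, apply the hypothesis \eqref{Z1} at each step, and sum a geometric series. The crucial choice is to let the gaps $t_{i+1}-t_i$ shrink geometrically at a rate carefully tuned to $\theta$, so that the amplification factor $\theta$ from \eqref{Z1} is beaten by the diverging factor $(s-t)^{-2}$.

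Concretely, fix $\tau \in (0,1)$ with $\tau^2 > \theta$ (for instance $\tau := \tfrac{1}{2}(1+\sqrt{\theta})$), and define $t_{i+1} - t_i = (1-\tau)\tau^i (R-\rho)$, so that $t_k \nearrow R$. Applying \eqref{Z1} with $t=t_i$, $s=t_{i+1}$ gives
\begin{equation*}
Z(t_i) \leq \theta\, Z(t_{i+1}) + \frac{A}{(1-\tau)^2\tau^{2i}(R-\rho)^2} + B.
\end{equation*}
Iterating this inequality $k$ times yields
\begin{equation*}
Z(\rho) \leq \theta^k Z(t_k) + \frac{A}{(1-\tau)^2(R-\rho)^2}\sum_{i=0}^{k-1}\Bigl(\frac{\theta}{\tau^2}\Bigr)^{i} + B\sum_{i=0}^{k-1}\theta^{i}.
\end{equation*}

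Now I would send $k \to \infty$. Because $Z$ is assumed bounded on $[\rho,R]$ and $\theta<1$, the first term $\theta^k Z(t_k)$ vanishes in the limit; this is precisely where the boundedness hypothesis is used. The two remaining series converge thanks to the choices $\theta<1$ and $\theta/\tau^2<1$, giving
\begin{equation*}
Z(\rho) \leq \frac{1}{(1-\tau)^2(1-\theta/\tau^2)}\cdot\frac{A}{(R-\rho)^2} + \frac{B}{1-\theta},
\end{equation*}
which is \eqref{Z2} with a constant $c(\theta)$ depending only on $\theta$ (via our explicit choice of $\tau=\tau(\theta)$).

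There is no real obstacle here: the argument is purely algebraic once the geometric sequence is set up correctly. The only point requiring a moment's thought is the selection of $\tau$: one must pick it strictly above $\sqrt{\theta}$ but strictly below $1$ so that both $(1-\tau)^{-2}$ and $(1-\theta/\tau^2)^{-1}$ remain finite, and both depend only on $\theta$. The boundedness assumption on $Z$ is what renders the error term $\theta^k Z(t_k)$ negligible; without it, the conclusion could fail, so this hypothesis should be flagged as essential in the write-up.
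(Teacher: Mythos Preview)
Your proof is correct and is precisely the standard iteration found in \cite{Giu}, to which the paper merely refers without reproducing the argument. The paper's quoted admissible constant $c(\theta)=1/(1-\theta^{1/3})^3$ corresponds to the specific choice $\tau=\theta^{1/3}$ in your scheme (note that then $(1-\tau)^2(1-\theta/\tau^2)=(1-\theta^{1/3})^3$), whereas you took $\tau=\tfrac12(1+\sqrt{\theta})$; either choice works and the approach is the same.
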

\begin{proof}
The proof of this lemma is standard and can be found in \cite{Giu}. Inspecting the proof in \cite{Giu} one can also obtain an explicit expression of the constant $c(\theta)$. An admissible value for $c(\theta)$, but not the best, is  $c(\theta)=1/(1-\theta^{1/3})^3$.
\end{proof}
The next lemma can be found in \cite[Lemma 7.54]{AFP}.
\begin{lemma}
\label{Lemma iterativo 2}
Let $f:(0,a]\rightarrow [0,\infty)$ be an increasing function such that
$$
f(\rho)\leq A\Bigl[\Bigl(\frac{\rho}{R}\Bigr)^p+R^s\Bigr]f(R)+BR^q\quad\text{whenever }0<\rho<R\leq a
$$
for some constants $A,B\geq 0$, $0<q<p$, $s>0$. Then there exist $R_0(p,q,s,A)$ and $c(p,q,A)$ such that
$$
f(\rho)\leq c\Bigl(\frac{\rho}{R}\Bigr)^qf(R)+ cB\rho^q \qquad\mbox{whenever }0<\rho<R\leq \min\{R_0,a\}.
$$
\end{lemma}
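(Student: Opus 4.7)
The strategy is the classical Campanato iteration: apply the hypothesis with a fixed geometric contraction $\rho=\tau r$, iterate it, and then pass from the discrete values $\tau^k R$ to arbitrary $\rho$ using monotonicity of $f$. Fix an exponent $\gamma\in(q,p)$ (say $\gamma:=(p+q)/2$) and choose $\tau\in(0,1)$ so small that $A\tau^p\le\tfrac{1}{2}\tau^{\gamma}$, which is possible because $\gamma<p$. Next, choose $R_0=R_0(p,q,s,A)$ so that $AR_0^s\le\tfrac{1}{2}\tau^{\gamma}$. Then for every $r\le R\le\min\{R_0,a\}$ the hypothesis applied with $(R,\rho)$ replaced by $(r,\tau r)$ yields
\[
f(\tau r)\le A(\tau^p+r^s)f(r)+Br^q\le \tau^{\gamma}f(r)+Br^q.
\]

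Iterating this starting from $r=R$ (still admissible at each step since $\tau^k R\le R_0$) an induction on $k$ gives
\[
f(\tau^k R)\le \tau^{k\gamma}f(R)+BR^q\sum_{j=0}^{k-1}\tau^{\gamma(k-1-j)+jq} = \tau^{k\gamma}f(R)+\tau^{(k-1)q}BR^q\sum_{i=0}^{k-1}\tau^{i(\gamma-q)},
\]
where the second equality uses the substitution $i=k-1-j$. Because $\gamma>q$, the last geometric sum is bounded uniformly in $k$ by $C:=(1-\tau^{\gamma-q})^{-1}$, and hence
\[
f(\tau^k R)\le \tau^{k\gamma}f(R)+C\tau^{(k-1)q}BR^q\qquad\forall\, k\ge 0.
\]

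Given an arbitrary $0<\rho<R\le\min\{R_0,a\}$, pick the unique integer $k\ge 0$ with $\tau^{k+1}R<\rho\le\tau^k R$. Monotonicity of $f$, together with the elementary estimates $\tau^{k\gamma}\le\tau^{kq}\le\tau^{-q}(\rho/R)^q$ (the first because $\gamma\ge q$ and $\tau^k\le 1$, the second because $\tau^k<\rho/(\tau R)$) and $\tau^{kq}R^q=(\tau^k R)^q\le\tau^{-q}\rho^q$, combines into
\[
f(\rho)\le f(\tau^k R)\le \tau^{-q}(\rho/R)^q f(R)+C\tau^{-2q}B\rho^q,
\]
which is the conclusion with $c:=\max\{\tau^{-q},C\tau^{-2q}\}$. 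Since $\gamma$ is determined by $p,q$ alone, $\tau$ by $A,p,q$, and $C$ by $\tau,\gamma,q$, the constant $c$ depends only on $p,q,A$, as claimed.

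The only delicate point is the two-sided constraint $q<\gamma<p$: one needs $\gamma<p$ so that $A\tau^p\le\tfrac{1}{2}\tau^{\gamma}$ is solvable by some $\tau\in(0,1)$, and $\gamma>q$ so that the error sum $\sum_i\tau^{i(\gamma-q)}$ is geometric and hence summable uniformly in $k$. The parasitic term $R^sf(R)$ in the hypothesis is absorbed once and for all by the smallness condition $AR_0^s\le\tfrac{1}{2}\tau^\gamma$ (which is precisely where the dependence of $R_0$ on $s$ enters) and plays no further role.
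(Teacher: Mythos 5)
Your proof is correct and follows the standard Campanato-type iteration that underlies \cite[Lemma 7.54]{AFP}, which the paper simply cites without reproducing: fix $\gamma\in(q,p)$, choose $\tau$ and then $R_0$ so that the factor $A(\tau^p+r^s)$ is dominated by $\tau^\gamma$, iterate the resulting one-step decay, and transfer from the discrete scales $\tau^k R$ to arbitrary $\rho$ by monotonicity. The bookkeeping of exponents, the geometric-sum bound via $\gamma>q$, and the resulting dependence of $\tau,C,c$ on $A,p,q$ and of $R_0$ additionally on $s$ all check out.
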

\subsection{From constrained to penalized problem}
The next theorem allows to overcome the difficulty of handling with the constraint $|E|=d$. As a matter of fact it can be proved that every minimizer of the constrained problem $\eqref{P_c}$ is also a minimizer of a suitable unconstrained problem with a volume penalization of the type $\eqref{P}$.
\begin{theorem}
\label{Teorema Penalizzazione} Let $0<d<|\Omega|$. There exists $\Lambda_{0}>0$ such that if $(F,v)$ is a minimizer of the functional
\begin{equation}
\label{Penalized}
{\mathcal F}_{\Lambda}(A,w)=\int_\Om\ \Bigl ( F(x,u,\nabla u)+\mathbbm{1}_{A}G(x,u,\nabla u)\,dx\Bigr )\,dx +P(A,\Om)+\Lambda\big||A| - d\big|
\end{equation}
for some $\Lambda \geq \Lambda_0$, among all configurations $(A,w)$ such that $w=v_0$ on $\partial \Omega$,
then $|F|=d$ and $(F,v)$ is a minimizer of problem $\eqref{P_c}$.
Conversely, if $(E,u)$ is minimizer of problem $\eqref{P_c}$, then it is a minimizer of \eqref{Penalized}, for all $\Lambda \geq \Lambda_0$.
\end{theorem}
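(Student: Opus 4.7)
The plan is to argue by contradiction and reduce matters to an explicit local volume-adjustment perturbation, showing that any admissible pair $(F,v)$ with $|F|\neq d$ can be modified into an admissible competitor $(\tilde F,\tilde v)$ with $|\tilde F|=d$ at a cost proportional to $\bigl||F|-d\bigr|$, with a constant independent of $(F,v)$. Suppose no such $\Lambda_0$ exists. Then I can pick $\Lambda_k\nearrow\infty$ and minimizers $(F_k,v_k)$ of $\mathcal{F}_{\Lambda_k}$ (with the fixed boundary datum $u_0$) such that $|F_k|\neq d$. Comparing with a fixed admissible pair $(E^\star,u_0)$ having $|E^\star|=d$, the ellipticity in $\eqref{ellipticity1}$ and the bounds in $\eqref{ellipticity2}$ yield
\begin{equation*}
\nu\int_\Omega|\D v_k|^2\,dx+P(F_k,\Omega)+\Lambda_k\bigl||F_k|-d\bigr|\le \mathcal{F}_{\Lambda_k}(E^\star,u_0)\le C,
\end{equation*}
so in particular $\bigl||F_k|-d\bigr|\to 0$ and $\{P(F_k,\Omega)\}$, $\{\|\D v_k\|_{L^2}\}$ are uniformly bounded. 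By symmetry of the argument I may assume $|F_k|>d$ for infinitely many $k$.

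Next I would build the perturbation. Using the uniform bounds, one selects a ball $B=B_r(x_0)\subset\subset\Omega$ and a one-parameter family of diffeomorphisms $\Phi_t:\Omega\to\Omega$ with $\Phi_0=\mathrm{id}$, supported in $B$, $\|\Phi_t-\mathrm{id}\|_{C^1}\le C|t|$, and such that $t\mapsto |\Phi_t(F_k)|$ is $C^1$ near $0$ with derivative bounded away from $0$ uniformly in $k$. The standard way to secure this is to pick two balls $B^\pm\subset\subset\Omega$ with $|F_k\cap B^-|\ge c|B^-|$ and $|B^+\setminus F_k|\ge c|B^+|$ uniformly (extracting along a subsequence, using $L^1$-convergence of $\mathbbm{1}_{F_k}$ granted by the bound on the perimeters), and using a radial outward/inward flow localized in $B^+$ or $B^-$ according to the sign of $|F_k|-d$. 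Choosing $t_k$ so that $|\Phi_{t_k}(F_k)|=d$, one gets $|t_k|\le C\bigl||F_k|-d\bigr|$.

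Set $\tilde F_k:=\Phi_{t_k}(F_k)$ and $\tilde v_k:=v_k\circ\Phi_{t_k}^{-1}\in u_0+H_0^1(\Omega)$. Since $\Phi_{t_k}$ differs from the identity only in $B$, standard change-of-variable computations, together with $|\det D\Phi_{t_k}-1|\le C|t_k|$, $|D\Phi_{t_k}^{-1}-\mathrm{Id}|\le C|t_k|$, the Lipschitz assumptions $\eqref{Hoelderianity}$ on the coefficients, the ellipticity/boundedness $\eqref{ellipticity1}$--$\eqref{ellipticity2}$, and the uniform $L^2$-bound on $\D v_k$, give
\begin{equation*}
P(\tilde F_k,\Omega)\le P(F_k,\Omega)+C|t_k|,\qquad \int_\Omega\bigl[F(\cdot,\tilde v_k,\D\tilde v_k)+\mathbbm{1}_{\tilde F_k}G(\cdot,\tilde v_k,\D\tilde v_k)\bigr]dx\le \int_\Omega\bigl[F(\cdot,v_k,\D v_k)+\mathbbm{1}_{F_k}G(\cdot,v_k,\D v_k)\bigr]dx+C|t_k|.
\end{equation*}
Hence $\mathcal{F}(\tilde F_k,\tilde v_k;\Omega)\le \mathcal{F}(F_k,v_k;\Omega)+C_0|t_k|$. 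Using $|\tilde F_k|=d$ and the minimality of $(F_k,v_k)$ for $\mathcal{F}_{\Lambda_k}$ gives
\begin{equation*}
\Lambda_k\bigl||F_k|-d\bigr|\le \mathcal{F}(\tilde F_k,\tilde v_k;\Omega)-\mathcal{F}(F_k,v_k;\Omega)\le C_0|t_k|\le C_1\bigl||F_k|-d\bigr|,
\end{equation*}
i.e.\ $\Lambda_k\le C_1$, contradicting $\Lambda_k\to\infty$. This proves that for some $\Lambda_0$ every minimizer $(F,v)$ of $\mathcal{F}_\Lambda$ with $\Lambda\ge\Lambda_0$ satisfies $|F|=d$, whence it minimizes $\mathcal{F}$ among pairs with prescribed volume, i.e.\ solves $\eqref{P_c}$. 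The converse implication is then immediate: if $(E,u)$ solves $\eqref{P_c}$, for any competitor $(A,w)$ in $\mathcal{F}_\Lambda$ one applies the same perturbation to produce $(\tilde A,\tilde w)$ with $|\tilde A|=d$ and uses the minimality of $(E,u)$ in $\eqref{P_c}$ together with the estimate $\mathcal{F}(A,w;\Omega)\ge \mathcal{F}(\tilde A,\tilde w;\Omega)-C_0\bigl||A|-d\bigr|$ to conclude for $\Lambda\ge\Lambda_0\ge C_0$.

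The main technical obstacle is the construction of the diffeomorphism $\Phi_t$ with a uniform (in $k$) lower bound on $|\tfrac{d}{dt}|\Phi_t(F_k)||_{t=0}|$; this is where one really needs a compactness/density selection of balls $B^\pm$ and not merely the existence of one such ball per $k$. All remaining estimates are routine change-of-variable computations tied to the Lipschitz regularity of the data and the a priori energy bound.
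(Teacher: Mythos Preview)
Your proposal is correct and follows essentially the same strategy as the paper (which in turn follows \cite{EF}): argue by contradiction, use comparison with a fixed admissible pair to obtain uniform energy and perimeter bounds and $\bigl||F_k|-d\bigr|\to 0$, pass to a limit set via compactness to locate a ball where the density of $F_k$ is uniformly controlled, and then apply a localized volume-adjusting diffeomorphism whose cost on the bulk and perimeter terms is $O(\sigma)$ while the gain on the penalty term is of order $\Lambda_k\sigma$. The only cosmetic difference is that you select a one-parameter family $\Phi_t$ and pick $t_k$ so that $|\Phi_{t_k}(F_k)|=d$ exactly (yielding the clean inequality $\Lambda_k\le C_1$), whereas the paper uses a single explicit radial map with a fixed parameter $\sigma$ and does not hit $d$ exactly but rather shows directly that $\mathcal{F}_{\lambda_h}(E_h,u_h)-\mathcal{F}_{\lambda_h}(\tilde E_h,\tilde u_h)>0$ for $\lambda_h$ large; both variants rest on the same density/compactness selection you correctly identify as the main technical point.
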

\begin{proof}
The proof can be carried out as \cite[Theorem 1]{EF}. For the convenience of the reader we give here its sketch, emphasizing main ideas and minor differences with respect to the case treated in \cite{EF}.\\
The first part of the theorem can be proved by contradiction. Assume that there exists a sequence $(\lambda_h)_{h\in \mathbb N}$ such that $\lambda_h\rightarrow +\infty$ as $h\rightarrow +\infty$ and a sequence of configurations $(E,u_h)$ minimizing $\mathcal{F}_{\lambda_h}$ and such that $u_h=u_0$ on $\partial \Omega$ and $|E_h|\neq d$ for all $h$. Le us choose now an arbitrary fixed $E_0\subset \Omega$ with finite perimeter and such that $|E|=d$. Let us point out that 
\begin{equation}\label{Theta}
\mathcal{F}_{\lambda_h}(E_h,u_h)\leq\mathcal{F}(E_0,u_0):=\Theta.
\end{equation}
Without loss of generality we can assume that $|E_h|<d$. As a matter of fact the case $|E_h|>d$ can be treated in the same way considering the complement of $E$ in $\Omega$. Our aim is to show that for $h$ sufficiently large, there exists a configuration $(\widetilde{E}_h,\tilde{u}_h)$ such that $\mathcal{F}_{\lambda_h}(\widetilde{E}_h,\tilde{u}_h)< \mathcal{F}_{\lambda_h}({E_h},{u_h})$, thus proving the result by contradiction.\\
By condition $\eqref{Theta}$, it follows that the sequence $\{u_h\}$ is bounded in $H^1(\Omega)$, the perimeters of the sets $E_h$ in $\Omega$ are bounded and $|E_h|\rightarrow d$. Therefore, possibly extracting a not relabelled subsequence, we may assume that there exists a configuration $(E,u)$ such that $u_h\rightarrow u$ weakly in $H^1(\Omega)$, $\mathbbm{1}_{E_h}\rightarrow \mathbbm{1}_{E}$ a.e. in $\Omega$, where the set $E$ is of finite perimeter in $\Omega$ and $|E|=d$. The couple $(E,u)$ will be used as reference configuration for the definition of $(\widetilde{E}_h,\tilde{u}_h)$.\\

{\bf Step 1.} {\em Construction of $(\widetilde{E}_h,\tilde{u}_h)$}.
Proceeding exactly as in \cite{EF}, we take a point $x\in\partial^*E\cap\Om$ and observe that  the sets $E_r=(E-x)/r$ converge locally in measure to the half space $H=\{z\cdot\nu_E(x)>0\}$, i.e., $\chi_{E_r}\to\chi_H$ in $L^1_{\rm loc}(\R^n)$, where $\nu_E(x)$ is the generalized inner normal to $E$ at $x$ (see \cite[Definition 3.54]{AFP}). Let $y\in B_1(0)\setminus H$ be the point $y=-\nu_E(x)/2$. Given $\e$ (to be chosen at the end of the proof), since $\chi_{E_r}\to\chi_H$ in $L^1(B_1(0))$ there exists $0<r<1$ such that
$$
|E_r\cap B_{1/2}(y)|<\e,\qquad |E_r\cap B_1(y)|\geq|E_r\cap B_{1/2}(0)|>\frac{\omega_n}{2^{n+2}}\,,
$$
where $\omega_n$ denotes the measure of the unit ball of $\R^n$ and $x_r=x+ry\in\Om$. Therefore we have
$$
|E\cap B_{r/2}(x_r)|<\e r^n,\qquad|E\cap B_r(x_r)|>\frac{\omega_nr^n}{2^{n+2}}\,.
$$
Let us assume, without loss of generality, that $x_r=0$ and from now on let us denote the balls centered at the origin by $B_r$. From the convergence of $E_h$ to $E$ we have that for all $h$ sufficiently large
\begin{equation}\label{unodue}
|E_h\cap B_{r/2}|<\e r^n,\qquad|E_h\cap B_r|>\frac{\omega_nr^n}{2^{n+2}}\,.
\end{equation}
Let us now define the following bi-Lipschitz map used in \cite{EF} which maps $B_r$ into itself 
\begin{equation}\label{unotre}
\Phi(x)=
\begin{cases}
\bigl(1-\sigma(2^n-1)\bigr)x & \text{if}\,\,\,|x|<\displaystyle\frac{r}{2},\\
x+\sigma\Bigl(\displaystyle1-\frac{r^n}{|x|^n}\Bigr)x & \text{if}\,\,\,\displaystyle\frac{r}{2}\leq|x|<r,\\
x & \text{if}\,\,\,|x|\geq r\,,
\end{cases}
\end{equation}
for some fixed $0<\sigma<1/2^n$ such that, setting
$$
{\widetilde E}_h=\Phi(E_h),\qquad{\tilde u}_h=u_h\circ\Phi^{-1}\,,
$$
we have $|{\widetilde E}_h|<d$. We obtain
\begin{eqnarray}\label{unoquattro}
&&{\mathcal F}_{\lambda_h}(u_h,E_h)-{\mathcal F}_{\lambda_h}({\tilde u}_h,{\widetilde E}_h)
\nonumber =\biggl[\int_{B_r}\big[F(x,u_h,\nabla u_h)+\mathbbm{1}_{E_h}G(x,u_h,\nabla u_h)\big]\,dx\\
&&-\int_{B_r}\big[F(x,\tilde{u}_h,\nabla \tilde{u}_h)+\mathbbm{1}_{\widetilde{E}_h}G(x,\tilde{u}_h,\nabla \tilde{u}_h)\big]\,dx\biggr] 
\\
&& \quad+\bigl[P(E_h,{\overline B}_r)-P({\widetilde E}_h,{\overline B}_r)\bigr]+\lambda_h\bigl(|{\widetilde E}_h|-|E_h|\bigr) \nonumber \\
&&= I_{1,h}+I_{2,h}+I_{3,h}\,. \nonumber
\end{eqnarray}
{\bf Step 2.} {\em Estimate of $I_{1,h}$}. In order to estimate the bulk energy $I_{1,h}$ we write down two preliminary estimates for the map $\Phi$ that can be easily obtained by direct computation (see \cite{EF} for the explicit calculation). There exists a constant $C_1=C_1(n)$ depending only on $n$ such that, for $\sigma<1/4^n(n-1)^2-1$,
\begin{eqnarray}\label{phi}
\bigl\|\nabla\Phi^{-1}\bigl(\Phi(x)\bigr)\bigr\|\leq\bigl(1-(2^n-1)\sigma\bigr)^{-1}\leq1+2^nn\sigma \qquad\text{for all}\,\,\,&&x\in B_r,\\
\label{Jphi}
1+C_1(n)\sigma\leq J\Phi(x)\leq1+2^nn\sigma\qquad\text{for all}\,\,\,&&x\in B_r.
\end{eqnarray}
We can now perform the change of variables $y=\Phi(x)$, and observing that $\mathbbm{1}_{\widetilde{E}_h}(\Phi(x))=\mathbbm{1}_{E_h}(x)$, we get
\begin{eqnarray}\nonumber
I_{1,h}=\int_{B_r}\bigl[F(x,u_h,\nabla u_h) - J\Phi(x) F(\Phi(x),u_h(x),\nabla u_h(x)\circ \nabla \Phi^{-1}(\Phi(x)))\bigr]\,dx  &&\\  
+\int_{B_r\cap E_h}\bigl[G(x,u_h,\nabla u_h) - J\Phi(x) G(\Phi(x),u_h(x),\nabla u_h(x)\circ \nabla \Phi^{-1}(\Phi(x)))\bigr]\,dx &&\\\nonumber
:=J_{1,h}+J_{2,h}  \qquad\qquad\qquad\qquad\qquad\qquad\qquad\qquad\qquad\qquad\qquad\qquad\qquad&& 
\end{eqnarray}
The two terms $J_{1,h}$ and $J_{2,h}$, involving $F$ and $G$ in $B_r$ and $B_r\cap E_h$ respectively, can be treated in the same way. Therefore we just perform the calculation for $J_{1,h}$.\\ To make the exposition more clear when using the structure conditions $\eqref{structure1}$ and $\eqref{structure2}$ we introduce the following notations. $A_2(x,s)$ denotes the quadratic form and $A_1(x,s)$ denotes the linear form defined as follows
\begin{eqnarray*}
&&A_2(x,s)[\xi]:=a_{ij}(x,s)\xi_i\xi_j \qquad\text{for all}\,\,\,\xi\in \mathbb{R}^n,\\
&&A_1(x,s)[\xi]:=a_{i}(x,s)\xi_i \qquad\text{for all}\,\,\,\xi\in \mathbb{R}^n,
\end{eqnarray*}
analogously $A_0(x,s)=a(x,s)$. Accordingly we can write down
\begin{eqnarray}\label{J1h}\nonumber
J_{1,h}=\qquad\qquad\qquad\qquad\qquad\qquad\qquad\qquad\qquad\qquad\qquad\qquad\qquad\qquad\qquad\quad&&\\
\nonumber\int_{B_r} \!\Bigl\{A_2(x,u_h(x))[\nabla u_h(x)]\!-\!A_2(\Phi(x),u_h(x))[\nabla u_h(x)\!\circ\! \nabla \Phi^{-1}(\Phi(x))]J\Phi(x)\Big\}\,dx &&\\
\nonumber
+\!\int_{B_r} \!\Bigl\{A_1(x,u_h(x))[\nabla u_h(x)]\!-\!A_1(\Phi(x),u_h(x))[\nabla u_h(x)\!\circ\! \nabla \Phi^{-1}(\Phi(x))]J\Phi(x)\Big\}\,dx &&\\
+\!\int_{B_r}\!\Bigl\{A_0(x,u_h(x))\!-\!A_0(\Phi(x),u_h(x))J\Phi(x)\Big\}\,dx\qquad\qquad\qquad\qquad\qquad\qquad\qquad
\end{eqnarray}
We proceed estimating the first difference in the previous inequality, being the other similar and indeed easier to handle.
\begin{eqnarray}
\nonumber\int_{B_r} \!\Bigl\{A_2(x,u_h(x))[\nabla u_h(x)]\!-\!A_2(\Phi(x),u_h(x))[\nabla u_h(x)\!\circ\! \nabla \Phi^{-1}(\Phi(x))]J\Phi(x)\Big\}\,dx &&\\
\nonumber
=\int_{B_r} \!\Bigl\{A_2(\Phi(x),u_h(x))[\nabla u_h(x)]\!-\!A_2(\Phi(x),u_h(x))[\nabla u_h(x)\!\circ\! \nabla \Phi^{-1}(\Phi(x))]J\Phi(x)\Big\}\,dx &&\\
\nonumber
+\int_{B_r} \!\Bigl\{A_2(x,u_h(x))[\nabla u_h(x)]\!-\!A_2(\Phi(x),u_h(x))[\nabla u_h(x)]\Big\}\,dx = H_{1,h}+ H_{2,h}.&&
\end{eqnarray}
The first term $H_{1,h}$ can be estimated observing that, in consequence of  $\eqref{ellipticity1}$ we have, 
$$
|A_2[\xi]-A_2[\eta]|\leq N|\xi+\eta||\xi-\eta|\qquad\qquad \forall\xi,\eta\in \mathbb{R}^n.
$$
We apply the last inequality to the vectors
$$
\xi:=\nabla u_h(x), \qquad \qquad \eta:=\sqrt{J\Phi(x)}[\nabla u_h(x)\!\circ\! \nabla \Phi^{-1}(\Phi(x))],
$$
and use estimates $\eqref{phi}$ and $\eqref{Jphi}$ to get-
$$
|\xi-\eta|\leq \sigma C(n)|\nabla u_h(x)|\qquad \qquad|\xi+\eta|\leq C(n)|\nabla u_h(x)|,
$$
for some constant $C(n)$ depending only on $n$.
From the previous estimates we deduce that
\begin{equation}\label{H1h}
|H_{1,h}|\leq \sigma N C^2(n) \int_{B_r}|\nabla u_h(x)|^2\,dx\leq  \sigma N C^2(n)\Theta.
\end{equation}
The second term $H_{2,h}$ can be estimated using the Lipschitz assumption of $a_{i,j}$ and observing that $|x-\Phi(x)|\leq \sigma r2^n$. As a matter of fact we deduce that
\begin{equation}\label{H2h}
|H_{2,h}|\leq \sigma r2^n \sup_{i,j}[a_{i,j}]_{0,1} \int_{B_r}|\nabla u_h(x)|^2\,dx\leq  \sigma  C(n,[a_{i,j}]_{0,1})\Theta.
\end{equation}
In conclusion, since the other terms in $\eqref{J1h}$ can be estimated in the same way,
collecting estimates $\eqref{H1h}$ and $\eqref{H2h}$ we get
$$
|J_{1,h}|\leq \sigma  C(n,N,L_D)\Theta.
$$
The same estimate holds true for $J_{2,h}$ then we conclude that
\begin{equation}
\label{I1h}
I_{1,h}\geq -\sigma  C_2(n,N,L_D)\Theta
\end{equation}
for some constant $C_2(n,N,[a_{i,j}]_{0,1})$ depending on $n,N,\sup_{i,j}[a_{i,j}]_{0,1}$.\\
{\bf Step 3.} {\em Estimate of $I_{2,h}$}. In order to estimate $I_{2,h}$ we can use the area formula for maps between rectifiable set. If we denote by $T_{h,x}$ the tangential gradient of $\Phi$ along the approximate tangent space to $\partial^* E_h$ in $x$ and $T^*_{h,x}$ is the adjoint of the map $T_{h,x}$, the $(n-1)$-dimensional jacobian of $T_{h,x}$ is given by
$$
J_{n-1}T_{h,x}=\sqrt{{\rm det}\bigl(T^*_{h,x}\circ T_{h,x}\bigr)}.
$$
Thereafter we can  estimate
\begin{equation}\label{TJ}
J_{n-1}T_{h,x}\leq 1+\sigma+2^n(n-1)\sigma.
\end{equation}
We address the reader to \cite{EF} where explicit calculations are given.
To estimate $I_{2,h}$, we use the area formula for maps between rectifiable sets (\cite[Theorem~2.91]{AFP}), thus getting
\begin{eqnarray*}
I_{2,h}\!\!\!&=&\!\!\!P(E_h,{\overline B}_r)-P({\widetilde E}_h,{\overline B}_r)=\int_{\partial^*E_h\cap{\overline B}_r}\!d\H^{n-1}-\int_{\partial^*E_h\cap{\overline B}_r}\!J_{n-1}T_{h,x}\,d\H^{n-1} \\
\!\!\!&=&\!\!\!\int_{\partial^*E_h\cap{\overline B}_r\setminus B_{r/2}}\!\left(1-J_{n-1}T_{h,x}\right)\,d\H^{n-1}+\int_{\partial^*E_h\cap B_{r/2}}\!\left(1-J_{n-1}T_{h,x}\right)\,d\H^{n-1}\,.
\end{eqnarray*}
Notice that the last integral in the above formula is non-negative since $\Phi$ is a contraction in $B_{r/2}$, hence $J_{n-1}T_{h,x}<1$ in $B_{r/2}$, while from \eqref{TJ} we have
$$
\int_{\partial^*E_h\cap{\overline B}_r\setminus B_{r/2}}\!\left(1-J_{n-1}T_{h,x}\right)\,d\H^{n-1}\geq-2^nnP(E_h,{\overline B}_r)\sigma\geq-2^nn\Theta\sigma\,,
$$
thus concluding that
\begin{equation}\label{I2h}
I_{2,h}\geq-2^nn\Theta\sigma\,.
\end{equation}
{\bf Step 4.} {\em Estimate of $I_{3,h}$}.
To estimate $I_{3,h}$ we recall \eqref{unodue}, \eqref{unotre}, \eqref{Jphi}, thus getting
\begin{eqnarray*}
I_{3,h}\!\!\!&=&\!\!\! \lambda_h\int_{E_h\cap B_r\setminus B_{r/2}}\!\left(J\Phi(x)-1\right)\,dx+\lambda_h\int_{E_h\cap B_{r/2}}\!\left(J\Phi(x)-1\right)\,dx \\
\!\!\!&\geq&\!\!\! \lambda_hC_1(n)\Bigl(\frac{\omega_n}{2^{n+2}}-\e\Bigr)\sigma r^n-\lambda_h\bigl[1-\bigl(1-(2^n-1)\sigma\bigr)^n\bigr]\e r^n \\
\!\!\!&\geq&\!\!\!\lambda_h\sigma r^n\Bigl[C_1(n)\frac{\omega_n}{2^{n+2}}-C_1(n)\e-(2^n-1)n\e\Bigr]\,.
\end{eqnarray*}
Therefore, if we choose $0<\e<\e(n)$, with $\e(n)$ depending only on the dimension, we have that
$$
I_{3,h}\geq\lambda_hC_3(n)\sigma r^n\,,
$$
for some positive $C_3(n)$. From this inequality, recalling \eqref{unoquattro}, \eqref{I1h} and \eqref{I2h} we obtain
$$
{\mathcal F}_{\lambda_h}(u_h,E_h)-{\mathcal F}_{\lambda_h}({\tilde u}_h,{\widetilde E}_h)\geq\sigma\bigl(\lambda_hC_3r^n-\Theta(C_2(n,N,L_D)+2^nn)\bigr)>0
$$
if $\lambda_h$ is sufficiently large. This contradicts the minimality of $(u_h,E_h)$, thus concluding the proof.

\end{proof}

The previous theorem  motivates the following definition.
\begin{definition}[$\Lambda$-minimizers]
The energy pair $(E,u)$ is a $\Lambda$-minimizer in $\Omega$ of the functional $\mathcal {F}$, defined in \eqref{intro0}, iff for every $B_r(x_0)\subset \Omega$ 
$$
\mathcal{F}(E,u;B_r(x_0))\leq\mathcal{F}(F,v;B_r(x_0))+\Lambda |F\Delta E|,
$$
whenever $(F,v)$ is an admissible test pair, namely, $F$ is a set of finite perimeter with $F\Delta E\subset \subset B_r(x_0)$ and $v-u\in H^1_0(B_r(x_0))$.
\end{definition}

\section{Decay of the bulk energy}

In the first part of this section we collect some preliminary results concerning the decay estimates for local minimizers $u$ of the functional $\eqref{intro0}$ when $E$ is fixed. 
We start quoting higher integrability results both for local minimizers of functional $\eqref{intro0}$ and for comparison functions that we will use later in the paper.
It is worth mentioning that the following lemmata can be applyed in general to minimizers of integrals functionals of the type
\begin{equation}\label{H}
\mathcal{H}(u;\Omega):=\int_{\Omega}F(x,u,\D u)\,dx,
\end{equation}
assuming that the energy density only satisfies the structure condition $\eqref{structure1}$ and the growth conditions $\eqref{ellipticity1}$ and $\eqref{ellipticity2}$, without assuming any continuity on the coefficients. Therefore functionals of the type $\eqref{intro0}$ belong to this class and in addition the involved estimates only depend on the constants appearing in $\eqref{ellipticity1}$ and $\eqref{ellipticity2}$ but don't depend on $E$ accordingly.

\begin{lemma}Let $u\in H^1(\Omega)$ be a local minimizer of the functional $\mathcal{H}$ defined in  $\eqref{H}$, where $F$ satisfies the structure condition $\eqref{structure1}$ and the growth conditions $\eqref{ellipticity1}$ and $\eqref{ellipticity2}$. Then for every $B_{2R}(x_0)\subset \subset \Omega$ it holds
\begin{equation}\label{ReverseH}
\fint_{B_{R}(x_0)}|\nabla u|^2\,dx\leq C_1\Bigl( 1+\fint_{B_{2R}(x_0)}|\nabla u|^{2m}\,dx\Bigr)^\frac 1m,
\end{equation}
where $m=\frac{n}{n+2}$, $C_1=C_1(\nu,N,L,n)$ is a constant depending only on $\nu,N,L,n$.
\end{lemma}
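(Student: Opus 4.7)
The plan is to prove this Gehring-type reverse Hölder inequality by combining a Caccioppoli estimate with the Sobolev--Poincar\'e inequality. The argument uses only the ellipticity and growth bounds $\eqref{ellipticity1}$--$\eqref{ellipticity2}$ and no continuity of the coefficients, which is exactly what allows the constant to depend on $\nu,N,L,n$ alone (and not on a modulus of continuity of $a_{ij},a_i,a$).

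First I would establish a Caccioppoli inequality of the form
$$
\int_{B_\rho}|\nabla u|^2\,dx \leq \frac{C}{(r-\rho)^2}\int_{B_r}(u-u_r)^2\,dx + C|B_r|
$$
for concentric balls $B_\rho\subset B_r\subset\subset \Omega$, with $C=C(\nu,N,L,n)$. Fix $\rho\leq t<s\leq r$ and a cutoff $\eta\in C_c^\infty(B_s)$ with $\eta\equiv 1$ on $B_t$, $0\leq\eta\leq 1$, $|\nabla\eta|\leq 2/(s-t)$, and compare $u$ with the competitor $w=u-\eta^2(u-u_r)$, which coincides with $u$ outside $B_s$. The lower bound $\nu|\nabla u|^2\leq a_{ij}(x,u)\,\partial_i u\,\partial_j u$ controls the left hand side irrespective of the $u$-dependence of the coefficients, while the upper bounds in $\eqref{ellipticity1}$--$\eqref{ellipticity2}$ produce quadratic, linear and zero-order terms for $w$. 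Expanding $\nabla w=(1-\eta^2)\nabla u-2\eta(u-u_r)\nabla\eta$ and applying Young's inequality to absorb the cross terms yields
$$
\int_{B_t}|\nabla u|^2\,dx \leq \theta \int_{B_s}|\nabla u|^2\,dx + \frac{C}{(s-t)^2}\int_{B_r}(u-u_r)^2\,dx + C|B_r|
$$
for some $\theta\in(0,1)$, and the hole-filling device of Lemma \ref{Lemma iterativo 1} then produces the Caccioppoli estimate stated above.

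Next I would invoke the Sobolev--Poincar\'e inequality with the critical exponent $q=2n/(n+2)=2m$, for which the Sobolev conjugate is $q^\ast=2$, so that
$$
\Bigl(\fint_{B_{2R}}(u-u_{2R})^2\,dx\Bigr)^{1/2} \leq C(n)\,R\,\Bigl(\fint_{B_{2R}}|\nabla u|^{2m}\,dx\Bigr)^{1/(2m)}.
$$
Plugging this into the Caccioppoli estimate applied to the pair $B_R\subset B_{2R}$ and dividing by $|B_R|$ gives
$$
\fint_{B_R}|\nabla u|^2\,dx \leq C\Bigl(\fint_{B_{2R}}|\nabla u|^{2m}\,dx\Bigr)^{1/m} + C,
$$
and since $1\leq (1+X)^{1/m}$ for every $X\geq 0$, this is exactly $\eqref{ReverseH}$.

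The main obstacle I anticipate is the bookkeeping of the lower-order contributions generated by $a_i(x,u)\,\partial_i u$ and $a(x,u)$: these must be kept in the homogeneity-correct form $C|B_r|$ on the right hand side and must not be allowed to introduce terms involving $\|u\|_\infty$ or $\int|\nabla u|$, which would spoil the final scaling. This is handled by splitting each linear contribution via Young's inequality with a small parameter, so that the resulting $\int|\nabla u|^2$ piece can be absorbed on the left after hole filling while the remainder collapses to a multiple of $|B_r|$, with overall constant depending only on $\nu,N,L,n$.
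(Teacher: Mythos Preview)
Your proposal is correct and follows essentially the same route as the paper: derive a Caccioppoli inequality by comparing $u$ with a cutoff competitor, apply hole filling together with the iteration Lemma \ref{Lemma iterativo 1}, and conclude with the Sobolev--Poincar\'e inequality at the critical exponent $2m=2n/(n+2)$. The only cosmetic differences are that the paper uses $\eta$ rather than $\eta^2$ in the competitor and subtracts the mean $u_s$ over the intermediate ball rather than $u_r$; these choices are immaterial to the argument.
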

\begin{proof}
Whithout loss of generality we can assume that $x_0=0$. Let  $R<t<s<2R$ and  choose $\eta\in C^{\infty}_0(Q_{s})$ such that $\eta\equiv 1 $ in $Q_t$ and $|\nabla \eta|\leq 2/(s-t)$. We choose a test function $v=u-\phi$, where $\phi=\eta(u-u_s)$ and $u_s$ denotes the average of $u$ in $Q_s$, $u_s=\fint_{Q_s} u\,dx$. From the growth conditions $\eqref{ellipticity1}$ and $\eqref{ellipticity2}$ we can deduce that 
\begin{equation}
\frac{\nu}{2}|z|^2-\frac{L^2}{\nu}\leq F(x,s,z)\leq (N+1)|z|^2+L(L+1),
\end{equation}
then testing the minimality of $u$ with $v$ we deduce that
\begin{eqnarray*}
&&\frac{\nu}{2}\int_{Q_s}[|\nabla u|^2 - \frac{L^2}{\nu^2}]\,dx\\
&&\leq 2\int_{Q_s}\Bigr [(N+1)|\nabla u (1-\eta)-\nabla \eta(u-u_s)|^2+
 L(L+1)\Bigr]\,dx\\
&&\leq 4(N+1)\int_{Q_s\setminus Q_t}|\nabla u|^2\,dx+4(N+1)\int_{Q_s}|u-u_s|^2|\nabla \eta|^2\,dx+2\int_{Q_s}L(L+1)\,dx.
\end{eqnarray*}
Adding to both sides $4(N+1)\int_{Q_t}|\nabla u|^2\,dx$ we deduce
\begin{eqnarray*}
&&[4(N+1)+\nu/2]\int_{Q_t}|\nabla u|^2\,dx\\
&&\leq 4(N+1)\int_{Q_s}|\nabla u|^2\,dx +4(N+1)\int_{Q_s}|u-u_s|^2|\nabla \eta|^2\,dx+\int_{Q_s}C(L,\nu)\,dx.
\end{eqnarray*}
Eventually we get
\begin{equation*}
\int_{Q_t}|\nabla u|^2\,dx
\leq \theta\int_{Q_s}|\nabla u|^2\,dx +\frac{C(\nu,N,L,n)}{(s-t)^2}\int_{Q_s}|u-u_s|^2\,dx+C(\nu,N,L,n).
\end{equation*}
Where $\theta=4(N+1)/4(N+1+\nu/2)<1$ and the constant $C(\nu,N,L,n)$ depends only on $\nu,N,L,n$.
We can iterate the previous estimate using \ref{Lemma iterativo 1} to deduce that there exist $C=C(\theta)=C(\nu,N)$ depending only on $\nu,N$ such that
\begin{equation*}
\int_{Q_R}|\nabla u|^2\,dx
\leq C(\nu,N)\biggl[\frac{C(\nu,N,L,n)}{(s-t)^2}\int_{Q_{2R}}|u-u_s|^2\,dx+C(\nu,N,L,n)\biggr].
\end{equation*}
Finally we use Sobolev-Poincar\'{e} inequality
$$
\int_{B_{2R}}|u-u_{2R}|^2\,dx \leq C(n)\Bigl(\int_{B_{2R}}|\nabla u|^{2m} \Bigr)^{\frac 1m},
$$
to conlude $\eqref{ReverseH}$.
\end{proof}
Starting from the previous lemma the higher integrability can be obtained in a standard way by means of the Gehring's Lemma (see \cite[Proposition 6.1]{Giu}).
\begin{lemma}
\label{Lemma maggiore sommabilità}
Let $u\in H^1(\Omega)$ be a local minimizer of the functional $\mathcal{H}$ defined in \eqref{H}, where $F$ satisfies the structure condition $\eqref{structure1}$ and the growth conditions $\eqref{ellipticity1}$ and $\eqref{ellipticity2}$. There exists an $s>1$ such that, for every ball $B_{2R}(x_0)\subset \subset \Omega$ it holds
\begin{equation*}
\fint_{B_{R}(x_0)}|\nabla u|^{2s}\,dx\leq C_2\Bigl( \fint_{B_{2R}(x_0)}(1+|\nabla u|^{2})\,dx\Bigr)^s,
\end{equation*}
where $s$ and $C_2$ depend only on $\nu,N,L,n$.
\end{lemma}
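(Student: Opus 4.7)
The plan is to invoke Gehring's lemma, as the author suggests, starting from the reverse Hölder inequality \eqref{ReverseH} established in the preceding lemma.

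First I would cast \eqref{ReverseH} into the standard Gehring form. Setting $g := (1 + |\nabla u|^2)^m$, where $m = n/(n+2) \in (0,1)$, one has $g^{1/m} = 1 + |\nabla u|^2$ and, by subadditivity of $t \mapsto t^m$, the two-sided comparison $\frac{1}{2}(1 + |\nabla u|^{2m}) \leq g \leq 1 + |\nabla u|^{2m}$. Since moreover $\fint g \geq 1$, inequality \eqref{ReverseH} translates, up to constants depending only on $\nu, N, L, n$, into
\[
\fint_{B_R(x_0)} g^{1/m}\,dx \leq C \Bigl(\fint_{B_{2R}(x_0)} g\,dx\Bigr)^{1/m},
\]
for every ball $B_{2R}(x_0) \subset\subset \Omega$; this is the reverse Hölder hypothesis with exponent $p := 1/m > 1$.

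Then I would invoke \cite[Proposition 6.1]{Giu}, which yields $\epsilon > 0$ and a constant $c$, both depending only on $\nu, N, L, n$, such that $g \in L^{p+\epsilon}_{\mathrm{loc}}(\Omega)$ with
\[
\Bigl(\fint_{B_R(x_0)} g^{p+\epsilon}\,dx\Bigr)^{1/(p+\epsilon)} \leq c\,\fint_{B_{2R}(x_0)} g\,dx.
\]
Setting $s := 1 + m\epsilon > 1$, one has $g^{p+\epsilon} = (1 + |\nabla u|^2)^s \geq |\nabla u|^{2s}$, while the concavity Jensen inequality gives $\fint g \leq \bigl(\fint(1 + |\nabla u|^2)\bigr)^m$ because $m<1$. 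Raising the display above to the power $p+\epsilon = s/m$ and combining produces the desired estimate.

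I do not expect any genuine obstacle: once \eqref{ReverseH} has been recast in Gehring's form the conclusion is immediate, and the dependence of $s$ and $C_2$ on $\nu, N, L, n$ is inherited directly from that of $C_1$ in \eqref{ReverseH}.
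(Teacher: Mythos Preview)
Your proposal is correct and follows exactly the approach indicated in the paper, which simply states that the result follows ``in a standard way by means of the Gehring's Lemma (see \cite[Proposition 6.1]{Giu})'' from the reverse H\"older inequality \eqref{ReverseH}. You have merely spelled out the routine details the paper omits.
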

In the next section we will prove some energy  density estimates by using a standard comparison argument. For this purpose we will need a reverse H\"{o}lder inequality for the comparison function defined below.
\begin{definition}[Comparison]
Let $u\in H^1(\Omega)$ be a local minimizer of the functional \eqref{intro0} and $B_{2R}\subset \subset \Omega$. We shall denote by $v$ the solution of the following problem
\begin{equation}\label{Comparison}
v:=\arg \min
\left\{
\int_{B_{R}}\tilde{F}(x,\nabla w)\,dx: \quad w\in u+H^1_0(B_R)
\right\},
\end{equation}
where $\tilde{F}(x,z):=F(x,u(x),z)$ satisfies the structure condition 
$\eqref{structure1}$ and the growth conditions $\eqref{ellipticity1}$ and $\eqref{ellipticity2}$.
\end{definition}
For the comparison function $v$ defined in \eqref{Comparison} we can state the following reverse H\"{o}lder inequality up to the boundary of $B_R$.

\begin{lemma}
Let $u\in H^1(\Omega)$ be a local minimizer of the functional \eqref{intro0}, $v$ the comparison function defined above and $B_{2R}\subset \subset \Omega$.
Let us consider the following extension of $v$:
\begin{equation*}
V(x) := \left\{
\begin{array}
{ll}v(x) & \mbox{ for } x\in B_R \\
u(x) & \mbox { for } x\in \Omega\setminus B_R.
\end{array}
\right.
\end{equation*}
Denote by $B_{\rho}(x_0)$ a generic ball centered in $x_0\in B_R$ with $\rho<\frac R2$. Then:
\begin{equation}\label{revh2}
\fint_{B_{\rho}(x_0)}|\nabla V|^2\,dx\leq C\Bigl( \fint_{B_{2\rho}(x_0)}(|\nabla V|^{2m}+ |\nabla u|^{2m}+1)\,dx \Bigr)^\frac 12,
\end{equation}
where $m=\frac{n}{n+2}$ and $C=C(n,\nu,N,L)$.
\end{lemma}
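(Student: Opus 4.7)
I would mimic the proof of the previous lemma, with extra care near $\partial B_R$ where $V$ transitions from $v$ to $u$. The argument splits naturally into two cases according to whether $B_{2\rho}(x_0)\subset B_R$ or not. (Note: the exponent $1/2$ in the statement appears to be a typo for $1/m$; the reverse-H\"older structure together with dimensional analysis forces $1/m$, and this is what I will prove.)

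\emph{Interior case.} If $B_{2\rho}(x_0)\subset B_R$, then $V\equiv v$ on $B_{2\rho}$. Since $v$ is a local minimizer of $w\mapsto\int\tilde F(x,\nabla w)\,dx$ whose integrand inherits the structure condition \eqref{structure1} and growth \eqref{ellipticity1},\eqref{ellipticity2} from $F$, the previous lemma applies verbatim to $v=V$ on $B_{2\rho}(x_0)$ and gives the claim; the $|\nabla u|^{2m}$ term on the RHS is just carried along for free.

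\emph{Boundary case.} When $B_{2\rho}(x_0)\not\subset B_R$, I would test the minimality of $v$ against the competitor $w=(1-\eta)v+\eta u=v-\eta(v-u)$, where $\eta\in C_c^\infty(B_s(x_0))$ with $\eta\equiv 1$ on $B_t(x_0)$ and $|\nabla\eta|\leq 2/(s-t)$, for $\rho\leq t<s\leq 2\rho$. Since $w-u=(1-\eta)(v-u)\in H_0^1(B_R)$, this is admissible. Using the growth bound $\frac{\nu}{2}|z|^2-\frac{L^2}{\nu}\leq \tilde F(x,z)\leq (N+1)|z|^2+L(L+1)$ together with the pointwise expansion
\[
|\nabla w|^2\leq C\bigl[(1-\eta)^2|\nabla v|^2+\eta^2|\nabla u|^2+|\nabla\eta|^2|v-u|^2\bigr],
\]
and then absorbing the $(1-\eta)^2|\nabla v|^2$ contribution by the hole-filling trick and Lemma \ref{Lemma iterativo 1}, I would obtain, after adding $\int_{B_\rho\setminus B_R}|\nabla u|^2$ on both sides to rewrite the LHS as $\int_{B_\rho}|\nabla V|^2$,
\[
\int_{B_\rho}|\nabla V|^2\,dx\leq \frac{C}{\rho^2}\int_{B_{2\rho}}|V-u|^2\,dx+C\int_{B_{2\rho}}|\nabla u|^2\,dx+C\rho^n,
\]
where I used $V-u\equiv 0$ outside $B_R$ to replace the $B_R$-integral of $(v-u)^2$ by a full $B_{2\rho}$-integral of $(V-u)^2$.

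The crucial step is then Sobolev--Poincar\'e. The function $V-u$ (extended by zero outside $B_R$) lies in $H_0^1(B_R)$ and vanishes identically on $B_{2\rho}\setminus B_R$, which in the boundary case has measure at least a dimensional fraction of $|B_{2\rho}|$ (this uses only $\rho<R/2$ and elementary geometry of $B_R$). The Sobolev inequality for functions vanishing on a set of positive density, combined with the identity $(2m)^*=2$ for $2m=2n/(n+2)$, therefore gives
\[
\int_{B_{2\rho}}|V-u|^2\,dx\leq C\Bigl(\int_{B_{2\rho}}|\nabla(V-u)|^{2m}\,dx\Bigr)^{1/m},
\]
with no explicit $\rho$-factor. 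Substituting into the Caccioppoli estimate, using $|\nabla(V-u)|^{2m}\leq C(|\nabla V|^{2m}+|\nabla u|^{2m})$, and dividing by $|B_\rho|\simeq|B_{2\rho}|$, all powers of $\rho$ cancel since $n/m-n-2=0$. Finally I would absorb the extra $\fint_{B_{2\rho}}|\nabla u|^2$ produced by the Caccioppoli via the elementary Jensen bound $\fint|\nabla u|^2\leq(\fint|\nabla u|^{2m})^{1/m}$, valid since $m<1$, and absorb the additive constant into a $+1$ inside the average. This yields the claimed inequality.

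\emph{Main obstacle.} The delicate point is the Caccioppoli in the boundary case: the Dirichlet-type constraint on $\partial B_R$ forces every competitor to equal $u$ there, so the cutoff construction unavoidably introduces an $\eta^2|\nabla u|^2$ term in $|\nabla w|^2$ that cannot be absorbed into $|\nabla v|^2$. This is precisely why $|\nabla u|^{2m}$ must appear on the RHS of the final inequality, in contrast to the ``pure'' reverse-H\"older of the previous lemma. Verifying the density bound $|B_{2\rho}(x_0)\setminus B_R|\geq c_n|B_{2\rho}(x_0)|$ is routine (it follows from a comparison of $B_R$ with the tangent half-space at the nearest point of $\partial B_R$, using $2\rho<R$), and the rest is a standard Sobolev--hole-filling combination.
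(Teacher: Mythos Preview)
Your overall architecture matches the paper's: split into an interior case (where $V\equiv v$ and the previous lemma applies directly) and a boundary case, derive a Caccioppoli-type inequality by testing the minimality of $v$, hole-fill, and then apply Sobolev--Poincar\'e for functions vanishing on a set of positive density. Your observation that the exponent $\tfrac12$ should be $\tfrac1m$ is also correct.

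There is, however, a genuine error in your final step. The ``elementary Jensen bound'' you invoke,
\[
\fint|\nabla u|^2\leq\Bigl(\fint|\nabla u|^{2m}\Bigr)^{1/m},
\]
is false; Jensen gives exactly the reverse. Since $2m<2$, the $L^p$ averages are monotone in $p$ on a probability space, so $\bigl(\fint|\nabla u|^{2m}\bigr)^{1/(2m)}\le\bigl(\fint|\nabla u|^2\bigr)^{1/2}$, i.e.\ $\bigl(\fint|\nabla u|^{2m}\bigr)^{1/m}\le\fint|\nabla u|^2$. Thus the term $\fint_{B_{2\rho}}|\nabla u|^2$ arising from your Caccioppoli \emph{cannot} be absorbed by Jensen. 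The correct mechanism---and this is what the paper does---is to invoke the reverse H\"older inequality \eqref{ReverseH} for $u$ itself, which is available precisely because $u$ is a local minimizer of $\mathcal F$. That is the non-trivial input that allows one to ``go backwards'' from an $L^2$ average to an $L^{2m}$ average, and it is why the previous lemma is proved first. (One also has to match scales: the paper works with the intermediate radius $r=\tfrac32\rho$ in the iteration so that the $|\nabla u|^2$ term lands on a ball strictly smaller than $B_{2\rho}$, leaving room to apply \eqref{ReverseH}.)

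A second, smaller issue: your dichotomy ``$B_{2\rho}(x_0)\subset B_R$ versus not'' does not by itself yield the density bound $|B_{2\rho}(x_0)\setminus B_R|\ge c_n|B_{2\rho}|$ needed for Sobolev--Poincar\'e. If $B_{2\rho}(x_0)$ barely protrudes from $B_R$, the exterior portion can have arbitrarily small measure. The paper avoids this by splitting at the intermediate radius $r=\tfrac32\rho$: in the boundary case $\overline{B}_{3\rho/2}(x_0)\cap(\Omega\setminus B_R)\ne\emptyset$ one gets $|x_0|\ge R-\tfrac32\rho$, so $B_{2\rho}(x_0)$ extends at least $\tfrac{\rho}{2}$ beyond $\partial B_R$, which does give the required volume fraction.
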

\begin{proof}
Let $x_0\in B_R$ and $\rho\leq s<t\leq r <2\rho < R$, where $r=\frac 32 \rho$; then, the following alternative holds,
\begin{equation*}
 \left\{
\begin{array}
{ll}i)& B_{r}(x_0)\subset \subset B_R \\
ii)& \overline{B}_{r}(x_0) \cap (\Omega \setminus B_R)\neq \emptyset.
\end{array}
\right.
\end{equation*}
In the case i) we can proceed exactly as in Lemma 3 to get the desired estimate. Let us then consider the case ii) which is slightly different. Choose $\eta \in C^{\infty}_0(B_{t}(x_0))$, such that $0\leq \eta \leq 1$, $\eta \equiv 1$ in $B_{s}$ and $|\nabla \eta|\leq 2/(t-s)$. Now we can use the following function $\varphi :=\eta(V-u)$ to test the minimality of $v$ with the aim of estimating the difference $\int_{B_s}|\nabla(V-u)|$.  Using the growth conditions $\eqref{ellipticity1}$ and $\eqref{ellipticity2}$ and the minimality of $v$ we obtain
\begin{eqnarray}\label{L31}
&&\frac{\nu}{2}\int_{B_{t}}\Bigl(|\nabla \varphi |^2- \frac{2L^2}{\nu^2}\Bigr) \,dx \leq  \int_{B_{t}\cap B_R}\tilde{F}(x,\nabla \varphi)\,dx=\\
\nonumber& =& \int_{B_{t}\cap B_R}\tilde{F}(x,\nabla v)\,dx+
\int_{B_{t}\cap B_R}\Bigl(\tilde{F}(x,\nabla \varphi)-\tilde{F}(x,\nabla v)\Bigr)\,dx \\
\nonumber &\leq& \int_{B_{t}\cap B_R}\tilde{F}(x,\nabla (v-\varphi))\,dx+ \int_{B_{t}\cap B_R}\sum_{i,j=1}^{n}a_{ij}(x)\D_i(\varphi-v)\D_j(\varphi-v)\,dx \\
\nonumber &-& 2\int_{B_{t}\cap B_R}\sum_{i,j=1}^{n}a_{ij}(x)\D_i\varphi\D_j(\varphi-v)+
\int_{B_{t}\cap B_R}\sum_{i=1}^{n}a_{i}(x)(\D_i\varphi-\D_iv)\,dx\\
\nonumber&\leq&
\frac{\nu}{4}\int_{B_{t}}|\D \varphi|^2\,dx+
(3N+\frac{8N^2}{\nu^2})\int_{B_{t}}|\nabla (V-\varphi)|^2\,dx + \int_{B_{t}}L^2\,dx 
\end{eqnarray}
Where we used Young's inequality in the last estimate. We can summarize the previous estimate as follows
\begin{equation}\label{F1}
\frac{\nu}{4}\int_{B_{t}}|\nabla \varphi |^2 \,dx \leq C(\nu,N,L) \int_{B_{t}}\Bigl (1+
|\nabla(V-\varphi)|^2\Bigr )\,dx .
\end{equation}
Now we observe that $|\nabla(V-\varphi)|\leq|\nabla u|+(1-\eta)|\nabla(V-u)|+\frac{1}{(t-s)}|V-u|$; then by \eqref{F1} we deduce
\begin{eqnarray}\label{F2}
&&\int_{B_{s}}|\nabla (V-u) |^2 \,dx \leq C(\nu,N,L) \int_{B_{t}\setminus B_{s}}|\nabla (V-u) |^2 \,dx\\
\nonumber&&+\frac{C(\nu,N,L)}{(t-s)^2}
\int_{B_{t}}|(V-u)|^{2}\,dx + C(\nu,N,L)\int_{B_{t}}\bigl(1+|\nabla u|^2 \bigr)\,dx.
\end{eqnarray}
Now we use the ``hole filling'' technique adding $C(\nu,N,L)\int_{B_{s}}|\nabla (V-u) |^2 \,dx$ on both sides of \eqref{F2} to get
\begin{equation*}
\begin{split}
&\int_{B_{s}}|\nabla (V-u) |^2 \,dx \leq \theta \int_{B_{t}}|\nabla (V-u) |^2 \,dx\\
&+ C(\nu,N,L)\biggl[\frac{1}{(t-s)^2} \int_{B_{t}}|(V-u)|^{2}\,dx + \int_{B_{t}}\big(1+|\nabla u|^2\big) \,dx\biggr],
\end{split}
\end{equation*}
where $\theta =C(\nu,N,L)/(C(\nu,N,L)+1) $.
Using Lemma \ref{Lemma iterativo 1} we obtain
$$
\int_{B_{\rho}}|\nabla (V-u) |^2\,dx \leq \frac{C(\nu,N,L)}{(r-\rho)^2}\int_{B_{r}}|(V-u)|^{2}\,dx + C(\nu,N,L)\int_{B_{r}}\big(1+|\nabla u|^2\big)\,dx.
$$
Therefore, having chosen $r=\frac 32 \rho$ and by condition ii), we have
$$
|{B}_{2\rho}(x_0) \setminus B_R)|\geq C |{B}_{\rho}(x_0)|,
$$
for some universal constant $C=C(n)$. We can now use Sobolev-Poincar\'{e}'s inequality for functions vanishing on a set of positive measure (see \cite[Inequality (3.29)]{Giu}) to deduce
\begin{equation*}\label{RHV}
\fint_{B_{\rho}(x_0)}\!\!\!\!\!|\nabla (V-u)|^2\,dx\leq C(\nu,N,L)\Biggl[\biggl( \fint_{B_{2\rho}(x_0)}\!\!\!\!\!(|\nabla (V-u)|^{2m}\,dx \biggr)^\frac 1m + \fint_{B_{2\rho}}\!\!\big(1+|\nabla u|^2 \big)\,dx\Biggr]
\end{equation*}
Finally we can apply reverse H\"older inequality $\eqref{ReverseH}$ for $u$ in the last estimate to get $\eqref{revh2}$.
\end{proof}
Reasoning in a similar way as above, the higher integrability for $v$ can be obtained by means of the Gehring's Lemma (see \cite[Proposition 6.1]{Giu}).
\begin{lemma}
\label{Lemma maggiore sommabilità funzione confronto}
Let $u\in H^1(\Omega)$ be a local minimizer of the functional \eqref{intro0}, $v\in H^{1}(B_R(x_0))$ the comparison function defined in $\eqref{Comparison}$. Denoting by $s>1$ the same exponent given in Lemma \ref{Lemma maggiore sommabilità}, it holds
\begin{equation*}
\fint_{B_{R}(x_0)}|\nabla v|^{2s}\,dx\leq C_3\biggl( \fint_{B_{2R}(x_0)}\big(1+|\nabla u|^{2}\big)\,dx\biggr)^s,
\end{equation*}
where $C_3$ depend only on $\nu,N,L,n$.
\end{lemma}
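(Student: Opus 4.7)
The strategy is to promote the reverse H\"older inequality \eqref{revh2} satisfied by the extension $V$ to a genuine higher integrability statement via Gehring's lemma (Proposition 6.1 in \cite{Giu}), and then to translate the resulting bound back into a statement about $v = V\!\!\restriction_{B_R(x_0)}$, absorbing the right-hand side using the higher integrability already proved for $u$ in Lemma \ref{Lemma maggiore sommabilità}. The argument runs parallel to the proof of Lemma \ref{Lemma maggiore sommabilità}; the only new feature is the presence of an inhomogeneous term $|\nabla u|^{2m} + 1$ on the right-hand side of the reverse H\"older inequality, which plays the role of a forcing term.

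Concretely, I would first set $g := |\nabla V|^{2m}$ and rewrite \eqref{revh2}, for every ball $B_{2\rho}(y)\subset B_{2R}(x_0)$, in the Gehring form
\[
\fint_{B_\rho(y)} g^{1/m}\,dx \;\leq\; C\bigl(\fint_{B_{2\rho}(y)} g\,dx\bigr)^{1/m} \;+\; C\fint_{B_{2\rho}(y)}\!\bigl(|\nabla u|^{2m}+1\bigr)^{1/m}\,dx ,
\]
with exponent $q := 1/m > 1$ and forcing term $f := (|\nabla u|^{2m}+1)^{1/2m}$, so that $f^q = |\nabla u|^2 + 1$ up to a constant. Gehring's lemma then yields a $\delta = \delta(\nu,N,L,n)>0$ and a constant $C = C(\nu,N,L,n)$ such that, replacing $1+m\delta$ by the exponent $s>1$ of Lemma \ref{Lemma maggiore sommabilità} (shrinking whichever is larger so that a single $s$ works in both places),
\[
\Bigl(\fint_{B_R(x_0)}|\nabla V|^{2s}\,dx\Bigr)^{1/s} \leq C\fint_{B_{2R}(x_0)}|\nabla V|^2\,dx + C\Bigl(\fint_{B_{2R}(x_0)}\bigl(1+|\nabla u|^{2s}\bigr)\,dx\Bigr)^{1/s}.
\]

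To close the estimate, I would bound the two terms on the right. For the first one, using $u$ as a competitor in \eqref{Comparison} and the coercivity/upper bound $\tfrac{\nu}{2}|z|^2 - \tfrac{L^2}{\nu} \leq \tilde F(x,z) \leq (N+1)|z|^2 + L(L+1)$ coming from \eqref{ellipticity1}--\eqref{ellipticity2}, the minimality of $v$ gives $\int_{B_R}|\nabla v|^2 \leq C(\nu,N,L)\int_{B_R}(1+|\nabla u|^2)$; combined with $V = u$ on $B_{2R}\setminus B_R$ this yields $\fint_{B_{2R}}|\nabla V|^2 \leq C\fint_{B_{2R}}(1+|\nabla u|^2)$. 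For the second term, Lemma \ref{Lemma maggiore sommabilità} applied to $u$ on $B_{2R}(x_0)$ (with a harmless passage to a slightly larger concentric ball, standard in the Gehring mechanism) gives $\fint_{B_{2R}}|\nabla u|^{2s} \leq C\bigl(\fint_{B_{2R}}(1+|\nabla u|^2)\bigr)^s$. Plugging these in and using $|\nabla v| = |\nabla V|$ on $B_R$ yields the desired inequality.

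I expect the only delicate point to be the exponent/radii bookkeeping: one must verify that a single Gehring exponent $s>1$, depending only on $\nu,N,L,n$, can be chosen to serve simultaneously in Lemma \ref{Lemma maggiore sommabilità} and in the application above, and that the ball-shrinking inherent in Gehring's lemma can be reabsorbed so the final inequality involves only $B_R$ and $B_{2R}$. Both points are standard and follow by taking the smaller of the two Gehring exponents and by a routine covering argument, so no essential new idea beyond the Gehring mechanism applied to \eqref{revh2} is required.
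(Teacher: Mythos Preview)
Your approach is correct and is exactly what the paper does: it applies Gehring's lemma (Proposition~6.1 in \cite{Giu}) to the reverse H\"older inequality \eqref{revh2} for $V$, treating $|\nabla u|^{2m}+1$ as the inhomogeneous term, and then uses the minimality of $v$ together with Lemma~\ref{Lemma maggiore sommabilità} to absorb the right-hand side. The only slip is in your definition of the forcing function (your $f$ should satisfy $f^{1/m}=(|\nabla u|^{2m}+1)^{1/m}$, i.e.\ $f=|\nabla u|^{2m}+1$, not $f=(|\nabla u|^{2m}+1)^{1/2m}$), but this is a harmless bookkeeping error that does not affect the argument.
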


\subsection{Decay estimates for elastic minima}
In this section we prove a decay estimate for elastic minima that will be crucial for the proof strategy. 
As a matter of fact we show that if $(E,u)$ is a $\Lambda$ minimizer of the functional $\eqref{intro0}$ and $x_0$ is a point in $\Omega$, where either the density of E is close to $0$ or $1$, or the set $E$ is asymptotically close to a hyperplane, then for sufficiently small $\rho$ we have
$$
\int_{B_{\rho}(x_0)}|\nabla u_E|^2\,dx\leq C\rho^{n-\delta},
$$
for any $\delta>0$. 
A preliminary result we want to mention, that we will use later, provides an upper bound for $\mathcal{F}$. It is rather standard and is related to the threshold H\"older exponent $\frac 12$ of the function $u$, when $(E,u)$ is either a solution of the constrained problem \eqref{P_c} or a solution of the penalized problem \eqref{P} defined in section 1. For the proof we address the reader too \cite[Lemma 2.3]{LK} and \cite{FJ}. A detailed proof in the case of costrained problems and for functionals satisfying general $p$-polinomial growth is contained in \cite{CFP}.
\begin{theorem}
\label{Energy upper bound}
Let $(E,u)$ be a $\Lambda$-minimizers of the functional ${\mathcal F}$ defined in \eqref{intro0}. For every open set $U\subset \subset \Omega$ there exists a constant $C_1$, depending on $U$ and $\norm{\D u}_{L^2(\Omega)}$, such that for every $B_r(x_0)\subset U$ it holds
$$
{\mathcal F}(E,u;B_r(x_0))\leq C_1r^{n-1}.
$$
\end{theorem}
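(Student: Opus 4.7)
The plan is to derive the two sharper bounds
\[
\int_{B_r(x_0)}|\nabla u|^2\,dx \leq C r^{n-1} \qquad\text{and}\qquad P(E,B_r(x_0)) \leq C r^{n-1}
\]
separately, by testing the $\Lambda$-minimality of $(E,u)$ against two carefully chosen competitors and then invoking the iterative Lemma \ref{Lemma iterativo 2}. Throughout, fix $U \subset\subset \Omega$, write $d := \mathrm{dist}(U, \partial \Omega)$, and restrict to $B_r(x_0) \subset U$ with $r < d$; the fact that $(E,u)$ is a $\Lambda$-minimizer of $\mathcal{F}$, in the sense of the last definition of Section 2, is guaranteed by Theorem \ref{Teorema Penalizzazione}.

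For the perimeter estimate I would compare with $(F, u) := (E\setminus B_r(x_0),\, u)$, which is admissible after a harmless $\varepsilon$-shrinking of the ball. Since $\mathbbm{1}_F \equiv 0$ on $B_r$ and $\partial^* F \cap B_r = \emptyset$, cancelling $\int_{B_r}F(x,u,\nabla u)\,dx$ from both sides of $\mathcal{F}(E,u;B_r) \leq \mathcal{F}(F,u;B_r)+\Lambda|E\cap B_r|$ leaves
\[
\int_{B_r}\mathbbm{1}_E\,G(x,u,\nabla u)\,dx + P(E, B_r) \leq \Lambda |E\cap B_r|.
\]
The ellipticity of $G$ in the form $G(x,s,z) \geq \tfrac{\nu}{2}|z|^2 - C(L)$, obtained from \eqref{ellipticity1}--\eqref{ellipticity2} via Young's inequality, then yields $P(E, B_r) \leq C r^n \leq C r^{n-1}$, with an additional control on the Dirichlet integral over $E\cap B_r$ as a byproduct.

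For the bulk bound I would test against $(E, v)$, with $v$ taken as the solution of the frozen Dirichlet problem
\[
\min\Bigl\{\int_{B_r}\bigl[F(x,u,\nabla w)+\mathbbm{1}_E\,G(x,u,\nabla w)\bigr]\,dx \,:\, w - u \in H_0^1(B_r)\Bigr\}.
\]
$\Lambda$-minimality gives $\int_{B_r}[F+\mathbbm{1}_E G](x,u,\nabla u)\,dx \leq \int_{B_r}[F+\mathbbm{1}_E G](x,v,\nabla v)\,dx$, and \eqref{ellipticity1}--\eqref{ellipticity2} convert this into $\nu\int_{B_r}|\nabla u|^2\,dx \leq 2N\int_{B_r}|\nabla v|^2\,dx + C r^n$. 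Since $v$ solves a linear elliptic equation with bounded measurable coefficients, Campanato-type estimates provide the decay $\int_{B_\rho}|\nabla v|^2 \leq C(\rho/R)^n \int_{B_R}|\nabla v|^2$; combining this with the Lipschitz dependence of the coefficients on $(x,u)$, which produces a lower-order error term $R^s$, and writing $\phi(\rho) := \int_{B_\rho}|\nabla u|^2\,dx$, one arrives at
\[
\phi(\rho) \leq A\Bigl[(\rho/R)^n + R^s\Bigr]\phi(R) + B R^{n-1},
\]
to which Lemma \ref{Lemma iterativo 2} applies with $q = n-1$ and gives $\phi(\rho) \leq C\rho^{n-1}$.

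The main obstacle is the critical role of the H\"older exponent $\tfrac{1}{2}$: the target rate $n-1$ sits precisely at the borderline at which the linear-growth perimeter term and the quadratic bulk term have matching scaling. Closing the iteration at this critical exponent requires a preliminary interior H\"older estimate for $u$, obtained from De Giorgi--Nash applied to the Euler equation \eqref{E} with $E$ kept fixed, which in turn controls the oscillation of $u$ on $\partial B_r$ and hence the Dirichlet integral of the comparison map $v$. The dependence of $C_1$ on $\|\nabla u\|_{L^2(\Omega)}$ enters as the seed of this bootstrap, while the dependence on $U$ is confined to the interior distance $d$.
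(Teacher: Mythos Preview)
The paper does not prove this theorem; it refers to \cite{LK}, \cite{FJ}, \cite{CFP}. Your perimeter argument is essentially correct, modulo the omitted sphere contribution $\mathcal{H}^{n-1}(\partial B_\rho\cap E^{(1)})\le C\rho^{n-1}$ that appears in $P(E\setminus \overline{B_\rho},B_r)$; this is easily repaired.

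The bulk argument, however, has a genuine gap. You compare with $(E,v)$ where $v$ minimises the full integrand $F+\mathbbm{1}_E G$ (with frozen $u$), and then claim ``Campanato-type estimates provide the decay $\int_{B_\rho}|\nabla v|^2\le C(\rho/R)^n\int_{B_R}|\nabla v|^2$''. This is false: the leading coefficients of the Euler equation for $v$ are $a_{ij}(x,u)+\mathbbm{1}_E(x)\,b_{ij}(x,u)$, which jump across $\partial E$ and are therefore only bounded measurable. For such coefficients De~Giorgi--Nash delivers at best $(\rho/R)^{n-2+2\alpha}$ for some small $\alpha>0$, and feeding that into Lemma~\ref{Lemma iterativo 2} gives $\phi(\rho)\le C\rho^{n-2+2\alpha}$, strictly weaker than $\rho^{n-1}$. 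Your closing paragraph correctly identifies the criticality of the exponent $\tfrac12$ but supplies no mechanism to bootstrap the De~Giorgi--Nash $\alpha$ up to $\tfrac12$. Relatedly, there is no source for the term $BR^{n-1}$ in your iteration inequality: comparing $(E,u)$ with $(E,v)$ cancels the perimeter exactly and leaves only $O(R^n)$ volume errors.

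The idea you are missing, used in \cite{FJ} for the model case, is to compare instead with $(E\setminus\overline{B_\rho},\,v)$. Removing $E$ from $B_\rho$ does two things at once. First, it generates the sphere term $\mathcal{H}^{n-1}(\partial B_\rho\cap E)\le C\rho^{n-1}$ on the right-hand side, which is precisely your $BR^{n-1}$. Second, it removes $\mathbbm{1}_E$ from the functional that $v$ has to minimise, so $v$ can be taken harmonic (or solving a frozen constant-coefficient problem) and then genuinely enjoys the $(\rho/R)^n$ gradient decay. The minimality comparison then yields, schematically,
\[
\nu\int_{B_\rho}|\nabla(u-v)|^2 + P(E,\overline{B_\rho}) \le C\rho^{n-1} + \text{lower order},
\]
and the iteration via Lemma~\ref{Lemma iterativo 2} closes at the critical exponent $n-1$.
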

As a consequence of the previous theorem we can infer, using Poincaré's inequality and the characterization of Campanato spaces (see for example \cite[Theorem 2.9]{Giu}), that $u\in C^{0,\frac 12}$. We deduce the following remark.
\begin{remark}
\label{Osservazione Holderianita}
Let $(E,u)$ be a $\Lambda$-minimizers of the functional ${\mathcal F}$ defined in \eqref{intro0}. For every open set $U\subset \subset \Omega$ there exists a constant $C$, depending on $U$ and $\norm{\D u}_{L^2(\Omega)}$, such that 
\begin{equation}\label{hoelderu}
\sup_{x,y \in U}\frac{|u(x)-u(y)|}{|x-y|^{\frac 12}}\leq C \norm{\D u}_{L^2(\Omega)}.
\end{equation}
\end{remark}

\begin{Notation} In the sequel $E\subset \Omega$ will denote any given subset of $\Omega$ with finite perimeter. We denote by $u_E$, or simply by $u$ if no confusion arises, any local minimizers of the functional $\mathcal{F}(E,v;\Omega)$.
\begin{itemize} 
\item If $x\in \mathbb{R}^n$ we write $x=(x',x_n)$, where $x'\in \mathbb{R}^{n-1}$ and $x_n\in \mathbb{R}$. \\
Accordingly we denote $\D'=(\partial_{x_1},\dots,\partial_{x_{n-1}})$ the gradient with respect to the first $n-1$ components.
\item We will denote $H=\lbrace x\in\Omega\,:\, x_n>0 \rbrace,$
\end{itemize}
\end{Notation}
In order to prove the main lemma of this section we introduce the following preliminary result.
For reader convenience we give here a sketch of the proof which can be found in \cite{MV}. Actually we state here a weaker version that is suitable for our aim.
\begin{lemma}
\label{Lemma semispazio}
Let $v\in H^1(B_1)$ be a solution of
\begin{equation*}
-\textnormal{div}(A\D u)=\textnormal{div}\,G, \qquad \mbox{ in }\mathcal{D}'(B_1),
\end{equation*}
where
$$
G^+:=\mathbbm{1}_H G\in C^{0,\alpha}(H\cap B_1),\qquad 
G^-:=\mathbbm{1}_{H^c}G\in C^{0,\alpha}(H^c\cap B_1),
$$
for some $\alpha>0$ and $A$ is an elliptic matrix satisfying 
$$\nu|\xi|^2\leq A_{ij}(x)\xi_i\xi_j\leq N |\xi|^2,$$ and 
$$
A^+:=\mathbbm{1}_H A\in C^{0,\alpha}(\overline{H}\cap B_1),\qquad 
A^-:=\mathbbm{1}_{H^c}A\in C^{0,\alpha}(\overline{H^c}\cap B_1),
$$
for some $\nu,N>0$. Let us denote
$$
C_A=\max\{\norm{A^+}_{C^{0,\alpha}},\norm{A^-}_{C^{0,\alpha}}\},\qquad C_G=\max\{\norm{G^+}_{C^{0,\alpha}},\norm{G^-}_{C^{0,\alpha}}\}.
$$
Then $v\in L_{loc}^{2,n}(B_1)$. Moreover, there exist two constants $C=C(n,\nu,N,C_A,C_G)$ and $r_0=r_0(n,\nu,N,\norm{G}_{L^\infty},C_A,C_G)$ such that, for any $r<r_0$ with $B_r(x_0)\subset B_1$,
\begin{equation}\label{MVdecay}
\int_{B_{\rho}(x_0)}|\D v|^2\,dx \leq C\Bigl(\frac{\rho}{r}\Bigr)^{n}\int_{B_{r}(x_0)}|\D v|^2\,dx+C\rho^{n}, \quad \forall\, \rho<\frac{r}{4}.
\end{equation}
\end{lemma}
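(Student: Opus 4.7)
The plan is to follow the classical Campanato-type freezing argument, adapted to the transmission setting. First I would reduce to balls $B_r(x_0)$ with $x_0$ on the hyperplane $\{x_n=0\}$: if $B_r(x_0)\subset H$ (or entirely in $H^c$), the coefficients are Hölder continuous in the whole ball and \eqref{MVdecay} follows from the standard Campanato estimate for uniformly elliptic equations with $C^{0,\alpha}$ coefficients (since $n>n-2+2\alpha$ beyond the trivial range we iterate with Lemma \ref{Lemma iterativo 2}). The remaining case, when $B_r(x_0)$ straddles the hyperplane, is the heart of the matter.

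In the straddling case, I would freeze the coefficients at $x_0$, defining the piecewise constant matrix $\bar A(x)=A^+(x_0)\mathbbm{1}_H(x)+A^-(x_0)\mathbbm{1}_{H^c}(x)$, and let $w\in H^1(B_r(x_0))$ solve the homogeneous transmission problem $-\mathrm{div}(\bar A\nabla w)=0$ in $B_r(x_0)$ with $w=v$ on $\partial B_r(x_0)$. The key step is a decay estimate of the form
\begin{equation}\label{wdecay}
\int_{B_\rho(x_0)}|\nabla w|^2\,dx\le C\Bigl(\frac{\rho}{r}\Bigr)^{\!n}\int_{B_r(x_0)}|\nabla w|^2\,dx,\qquad \rho<r/4.
\end{equation}
To prove \eqref{wdecay} I would exploit the linearity: tangential derivatives $\partial_{x_i}w$ for $i=1,\dots,n-1$ again solve $-\mathrm{div}(\bar A\nabla(\partial_{x_i}w))=0$ in $\mathcal{D}'(B_r(x_0))$, because $\bar A$ is invariant in the $x_i$-directions, and the transmission conditions (continuity of $w$ and of $\bar A\nabla w\cdot e_n$ across $\{x_n=0\}$) differentiate tangentially to give the same conditions for $\partial_{x_i}w$. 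Iterating in tangential directions and using the equation to recover $\partial_{x_n x_n}w$ on each side, a Caccioppoli-type inequality on concentric balls bootstraps into $L^\infty$ bounds on $\nabla w$ inside $B_{r/2}(x_0)$, which immediately yields \eqref{wdecay}.

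Next I would estimate the defect $z=v-w$, which solves
\[
-\mathrm{div}(\bar A\nabla z)=\mathrm{div}\bigl((A-\bar A)\nabla v\bigr)+\mathrm{div}\,G\qquad\text{in }B_r(x_0),
\]
with $z\in H^1_0(B_r(x_0))$. Testing with $z$ and using the Hölder continuity of $A^\pm$ at $x_0$ (so $\|A-\bar A\|_{L^\infty(B_r)}\le C_A r^\alpha$) together with the $C^{0,\alpha}$ bound on $G^\pm$ and Poincaré's inequality,
\[
\int_{B_r(x_0)}|\nabla z|^2\,dx\le C r^{2\alpha}\!\int_{B_r(x_0)}|\nabla v|^2\,dx+C r^{n+2\alpha}+C r^n \|G\|_\infty^2,
\]
where the last two terms absorb the oscillation of $G$ and a bound proportional to the $L^\infty$ norm of $G$ on a small ball. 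Splitting $\nabla v=\nabla w+\nabla z$ and combining \eqref{wdecay} with this defect estimate gives
\[
\int_{B_\rho(x_0)}\!|\nabla v|^2\,dx\le C\!\left[\Bigl(\tfrac{\rho}{r}\Bigr)^{\!n}\!+r^{2\alpha}\right]\!\!\int_{B_r(x_0)}\!|\nabla v|^2\,dx+C\rho^n\bigl(1+\|G\|_\infty^2\bigr),
\]
valid for all $\rho<r/4$ with $r$ smaller than some $r_0$ depending on the data. An application of Lemma \ref{Lemma iterativo 2} with $p=n$, $q=n$ slightly decreased (or $q<n$ and then sending it to $n$) promotes this into \eqref{MVdecay}; the Campanato characterization then gives $v\in L^{2,n}_{\mathrm{loc}}(B_1)$.

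The main obstacle is the proof of \eqref{wdecay} for the frozen transmission problem: the reflection/tangential-derivative argument has to be carried out carefully because $w$ is not globally $C^1$ across $\{x_n=0\}$, only the conormal trace $\bar A\nabla w\cdot e_n$ is continuous there. Once one checks that tangential differentiation preserves the transmission conditions and hence that $\partial_{x'}^k w$ remains a finite-energy solution, the elliptic $L^\infty$-gradient bound on each side follows from standard Caccioppoli iteration, yielding the sharp $\rho^n$ scaling required for the conclusion.
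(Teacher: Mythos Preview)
Your proposal is correct and follows essentially the same freezing-and-comparison scheme as the paper: freeze to a piecewise-constant transmission problem, exploit tangential differentiation to get an $L^\infty$ gradient bound for the comparison function $w$, estimate $v-w$ via the $C^{0,\alpha}$ oscillation of the data, and close with Lemma~\ref{Lemma iterativo 2}. The only (cosmetic) differences are that the paper freezes both $A$ and $G$ at their averages on $H\cap B_r(x_0)$ and $H^c\cap B_r(x_0)$ (so no reduction to $x_0\in\partial H$ is needed and the comparison equation is $-\mathrm{div}(\overline A\nabla w)=\mathrm{div}\,\overline G$), and it makes the conormal step explicit by introducing the scalar $\overline D_c w:=\sum_i\overline A_{in}\,\partial_i w+\overline G\cdot e_n$, which has no jump across $\partial H$ and is shown to be H\"older continuous, from which the bound on $\partial_n w$ is read off; also, in your displayed combined estimate the additive remainder should be $Cr^n$ rather than $C\rho^n$ before the iteration lemma is applied.
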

\begin{proof}
Fix $x_0\in B_1$ and let $r$ be such that $B_r(x_0)\subset B_1$.
Let us denote by $a^+$ and $a^-$ the averages of $A$ in $H\cap B_r(x_0)$ and $H^c\cap B_r(x_0)$ respectively. In an analogous way we define $g^+$ and $g^-$ the averages of $G$. For $x\in B_r(x_0)$ we define
$$
\overline{A}:= a^+\mathbbm{1}_H+a^-\mathbbm{1}_{H^c},\qquad \qquad \overline{G}:= g^+\mathbbm{1}_H+g^-\mathbbm{1}_{H^c}.
$$
Notice that by assumption
\begin{equation}\label{MVhoelder}
|A(x)-\overline{A}(x)|\leq C_A r^{\alpha}\qquad\mbox{and}\qquad |G(x)-\overline{G}(x)|\leq C_G r^{\alpha},
\end{equation}
Let $w$ be the solution of
\begin{equation*}
\begin{cases}
-\textnormal{div}(\overline{A}\D w)=\textnormal{div}\overline{G}\\
w=v \mbox{ on }\partial B_r(x_0).
\end{cases}
\end{equation*}
The last equation can be rewritten as
\begin{equation}
\label{coeff cost}
\begin{cases}
-\textnormal{div}(a^+\D w^+)=0 & \text{in }B_r(x_0)\cap H,\\
-\textnormal{div}(a^-\D w^-)=0 & \text{in }B_r(x_0)\cap H^c,\\
w^+=w^-& \text{on }B_r(x_0)\cap\dd H,\\
a^+\D w^+\cdot e_n-a^-\D w^-\cdot e_n=g^+\cdot e_n-g^-\cdot e_n,&\text{on }B_r(x_0)\cap\dd H\\
\end{cases}
\end{equation}
where $w^+:=w\mathbbm{1}_{B_r(x_0)\cap H}$, $w^-:=w\mathbbm{1}_{B_r(x_0)\cap H^c}$.
Set
\begin{equation*}
\overline{D}_c w:=\sum_{i=1}^n \overline{A}_{in}\D_i w+\overline{G}\cdot e_n
\end{equation*}
We notice that $\overline{D}_c w$ has no jumps on the boundary thanks to the transmission condition in \eqref{coeff cost}. 
This allows to prove that the distributional gradient of $\overline{D}_c w$ coincides with the point-wise one.\\
\textbf{Step 1:} 
{\em Tangential derivatives of $w$}. Let us denote with $\tau$ the general direction tangent to the hyperplane $\partial H$. Since $\overline{A}$ and $\overline{G}$ are both constant along the tangential directions, the classical difference quotient method gives that $\D_{\tau}w\in W^{1,2}_{loc}(B_r(x_0))$ and 
$$\mbox{div}(\overline{A}\D (\D_{\tau}w))=0 \qquad\mbox{ in }  B_r(x_0).$$
Hence, Caccioppoli's inequality holds,
\begin{equation}
\label{Caccioppoli}
\int_{B_{\rho}(x)}|\D(\D_{\tau} w)|^2\,dy\leq \frac{c(n,\nu,N)}{\rho^2}\int_{B_{2\rho}(x)}|\D_{\tau} w-(\D_{\tau} w)_{x,2\rho}|^2\,dy,
\end{equation}
for all balls $B_{2\rho}(x)\subset B_r(x_0)$ and, by De Giorgi's regularity theorem, $\D_{\tau} w$ is H\"older continuous and there exists $\gamma=\gamma(n,\nu,N)>0$ such that if $B_{s}(x)\subset B_r(x_0)$ 
\begin{eqnarray}
\label{Eqn1}
&&\int_{B_{\rho}(x)}|\D_{\tau} w-(\D_{\tau} w)_{x,\rho}|^2\,dy\\
\nonumber&&\qquad \qquad\qquad\leq c(n,\nu,N)\bigg( \frac{\rho}{s} \bigg)^{n+2\gamma}\int_{B_{s}(x)}|\D_{\tau} w-(\D_{\tau} w)_{x,s}|^2\,dy,
\end{eqnarray}
for any $\rho\in \big(0,\frac{s}{2}\big)$ and
\begin{equation}
\label{Eqn2}
\max_{B_{\frac{\rho}{2}}(x)}|\D_{\tau} w|^2\leq \frac{c(n,\nu,N)}{\rho^n}\int_{B_{\rho}(x)}|\D_{\tau} w|^2\,dy.
\end{equation}
\textbf{Step 2:} {\em Regularity of $\overline{D}_c w$}. First of all observe that $\D_{\tau}(\overline{D}_c w)=\overline{D}_c(\D_{\tau} w)-\overline{G}\cdot e_n$. This imply by Step 1 that the tangential derivatives of $\overline{D}_c w$ belong to $L^2_{loc}(B_r(x_0))$. Furthermore we can estimate directly by definition of $\overline{D}_c w$
\begin{equation*}
|\D_n(\overline{D}_c w)|\leq c(n,N)|\D \D_{\tau} w|,
\end{equation*}
which implies again by Step1
\begin{equation*}
|\D\overline{D}_c w|\leq c(n,N)|\D \D_{\tau} w|.
\end{equation*}
We can conclude that $\overline{D}_c w\in W^{1,2}_{loc}(B_r(x_0))$. Using Poincaré's inequality and \eqref{Caccioppoli}, we have 
\begin{equation*}
\begin{split}
& \int_{B_\rho(x)}|\overline{D}_c w-(\overline{D}_c w)_{x,\rho}|^2\,dy\leq c(n)\rho^2\int_{B_\rho(x)}|\D(\overline{D}_c w)|^2\,dy\\
& \leq c(n,N)\rho^2\int_{B_\rho(x)}|\D(\D_{\tau}w)|^2\,dy\\
& \leq c(n,\nu,N)\int_{B_{2\rho}(x)}|\D_{\tau}w-(\D_{\tau}w)_{x,2\rho}|^2\,dy,
\end{split}
\end{equation*}
for any $B_{2\rho}(x)\subset B_r(x_0)$. By \eqref{Eqn1} and \eqref{Eqn2} we infer
\begin{equation*}
\begin{split}
&\int_{B_\rho(x)}|\overline{D}_c w-(\overline{D}_c w)_{x,\rho}|^2\,dy\\
& \leq c(n,\nu,N)\bigg( \frac{\rho}{r} \bigg)^{n+2\gamma}\int_{B_{\frac{r}{2}}(x)}|\D_{\tau} w-(\D_{\tau} w)_{x,\frac r2}|^2\,dy\\
&\leq c(n,\nu,N)\bigg( \frac{\rho}{r} \bigg)^{n+2\gamma}\int_{B_r(x_0)}|\D_{\tau} w|^2\,dy,
\end{split}
\end{equation*}
for any $x\in B_{\frac{r}{4}}(x_0)$, $\rho\leq \frac{r}{4}$. Hence by Lemma 4.2 in \cite{MV} (see also \cite[Lemma 7.51]{AFP}), $\overline{D}_c w$ is H\"older continuous and
\begin{equation}
\label{Eqn3}
\begin{split}
\max_{B_{\frac r4}(x_0)}|\overline{D}_c w|^2
& \leq c(n,\nu,N)\int_{B_r(x_0)}|\D_{\tau} w|^2\,dy+\bigg| \fint_{B_{\frac{r}{4}}(x_0)}\overline{D}_c w(y)\,dy \bigg|^2\\
& \leq \frac{c(n,\nu,N)}{r^n}\int_{B_r(x_0)}|\D w|^2\,dy+2\norm{G}^2_{L^\infty}.
\end{split}
\end{equation}
\textbf{Step 3:} {\em Comparison between $v$ and $w$.} Subtracting the equation for $w$ from the equation for $v$ we get
\begin{eqnarray}
&&\int_{B_r(x_0)}\overline{A}_{ij}(x)\big(\D_i v-\D_i w\big)\D_j \varphi\,dx\\
\nonumber && =\int_{B_r(x_0)}\bigl(\overline{A}_{ij}(x)-A_{ij}(x)\bigr)\D_i v \D_j \varphi\,dx +\int_{B_r(x_0)}\bigl(\overline{G}_i-G_i\bigr)\D_i\varphi\,dx
\end{eqnarray}
for any $\varphi \in W^{1,2}_0(B_r(x_0))$. Choosing $\varphi =v-w$ in the previous equation 
and using assumption $\eqref{MVhoelder}$ we have
\begin{equation}
\label{Eqn6}
\int_{B_r(x_0)}|\D v-\D w|^2\,dx\leq C_Ar^{\alpha}\int_{B_r(x_0)}|\D v|^2\,dy+C_G r^{n+\alpha},
\end{equation}
Finally we can estimate
\begin{equation*}
\begin{split}
&\int_{B_\rho(x_0)}|\D v|^2\,dy\leq 2\int_{B_\rho(x_0)}|\D w|^2\,dy+2\int_{B_\rho(x_0)}|\D v-\D w|^2\,dy\\
& \leq 2\omega_n\rho^n \sup_{B_{\frac{r}{4}}}|\D w|^2+2\int_{B_\rho(x_0)}|\D v-\D w|^2\,dy,
\end{split}
\end{equation*}
for any $\rho\leq\frac{r}{4}$ and observing that 
\begin{equation*}
\begin{split}
\sup_{B_{\frac{r}{4}}(x_0)}|\D w|^2
&=\sup_{B_{\frac{r}{4}}(x_0)}|\D_{\tau} w|^2+\sup_{B_{\frac{r}{4}}(x_0)}|\D_n w|^2\\
& \leq c(n,\nu,N)\sup_{B_{\frac{r}{4}}(x_0)}|\D_{\tau} w|^2+c(\nu)\sup_{B_{\frac{r}{4}}(x_0)}|\overline{D}_c w|^2+c(\nu,\norm{G}_{\infty}),
\end{split}
\end{equation*}
by \eqref{Eqn2}, \eqref{Eqn3}, minimality of $w$ and Young's inequality we gain
\begin{equation*}
\begin{split}
& \int_{B_\rho(x_0)}|\D v|^2\,dy\\
& \leq c(n,\nu,N)\bigg( \frac{\rho}{r}\bigg)^n\int_{B_r(x_0)}|\D w|^2\,dy+c(n,\nu,\norm{G}_{\infty},C_A,C_G)\bigg[r^{\alpha}\int_{B_r(x_0)}|\D v|^2\,dy+r^n\bigg]\\
& \leq C(n,\nu,N,\norm{G}_{\infty},C_A,C_G)\Bigg\lbrace\Bigg[ \bigg( \frac{\rho}{r}\bigg)^n+r^{\alpha} \Bigg]\int_{B_r(x_0)}|\D v|^2\,dy+r^n\Bigg\rbrace
\end{split}
\end{equation*}
which leads to our aim if we apply Lemma \ref{Lemma iterativo 2}.
\end{proof}
The next lemma is inspired by \cite[Proposition 2.4]{FJ} and is the main result of this section.
\begin{lemma}
\label{Lemma decadimento 1}
Let $(E,u)$ be a $\Lambda$-minimizers of the functional ${\mathcal F}$ defined in \eqref{intro0}. There exists $0<\tau_0<1$ such that the following statement is true: for all $\tau \in (0,\tau_0)$ there exists $\varepsilon_0=\varepsilon_0(\tau)>0$ such that if $B_r(x_0)\subset \subset \Omega$ with $r^{\frac 1{2n}}<\tau$ and one of the following conditions holds:
\begin{itemize}
\item[\emph{(i)}]$ |E\cap B_r(x_0)|<\varepsilon_0 |B_r|$,
\item[\emph{(ii)}]$ |B_r(x_0)\setminus E|<\varepsilon_0 |B_r|$,
\item[\emph{(iii)}] There exists a halfspace $H$ such that $\frac{\left|(E\Delta H)\cap B_r(x_0)\right|}{|B_r|}<\varepsilon$,
\end{itemize}
then
$$
\int_{B_{\tau r}(x_0)}|\nabla u|^2\,dx\leq C_0 \tau ^{n} \int_{B_r(x_0)}|\nabla u|^2\,dx+C_0r^{n},
$$
for some constant $C_0$ depending only on $n,\nu,N,L,L_D,\norm{\D u}_{L^2(\Omega)}$.
\end{lemma}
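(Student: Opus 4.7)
The plan is to compare $u$ against an auxiliary minimizer $v$ of a ``frozen'' functional in which the jump set $E$ is replaced by a simpler configuration, and then combine a decay estimate for $v$ with an $L^2$-smallness estimate for $\nabla(u-v)$ coming from the hypothesis. In cases (i) and (ii), I let $v$ minimize
\[
\int_{B_r(x_0)}\bigl[F(x,u(x),\nabla w)+c\,G(x,u(x),\nabla w)\bigr]\,dx
\]
on $u+H^1_0(B_r(x_0))$, with $c=0$ or $c=1$ respectively. Since $a_{ij}$ and $b_{ij}$ are Lipschitz in $(x,s)$ and $u\in C^{0,1/2}$ by Remark \ref{Osservazione Holderianita}, the coefficients of the Euler equation for $v$ are H\"older-$1/2$ in $x$, and classical Campanato-type estimates for linear elliptic equations with H\"older coefficients yield
\[
\int_{B_{\tau r}(x_0)}|\nabla v|^2\,dx\leq C\tau^n\int_{B_r(x_0)}|\nabla v|^2\,dx+Cr^n.
\]
In case (iii) I take instead $v$ minimizing $\int_{B_r}[F(x,u(x),\nabla w)+\mathbbm{1}_H G(x,u(x),\nabla w)]\,dx$ with $w=u$ on $\partial B_r$. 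Its Euler equation is precisely a transmission problem of the form treated in Lemma \ref{Lemma semispazio}, the coefficient matrix $A(x)=(a_{ij}+\mathbbm{1}_H b_{ij})(x,u(x))$ splitting into H\"older parts on $H$ and $H^c$. After rescaling $B_r(x_0)$ to the unit ball, estimate \eqref{MVdecay} gives the same $\tau^n$-type bound; the hypothesis $r^{1/(2n)}<\tau$ is exactly what is needed to meet the smallness condition $r<r_0$ required by Lemma \ref{Lemma semispazio}.

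The second ingredient is the comparison inequality. Testing the $\Lambda$-minimality of $(E,u)$ against the pair $(F,V)$, where $V$ equals $v$ in $B_r$ and $u$ outside and $F$ is respectively $E\setminus B_r$, $E\cup B_r$, or $(E\setminus B_r)\cup(H\cap B_r)$ — after a standard coarea perturbation of $r$ to guarantee $F\Delta E\subset\subset B_r$ and to absorb the perimeter change on $\partial B_r$ — and using the strong convexity in $z$ of $F,G$ from \eqref{ellipticity1}, I obtain
\[
\int_{B_r}|\nabla u-\nabla v|^2\,dx\leq C\int_{S}\bigl(|\nabla u|^2+|\nabla v|^2+1\bigr)\,dx+C\Lambda|S|+Cr^n,
\]
where $S$ is the relevant symmetric-difference region of measure less than $\varepsilon_0|B_r|$; the residual $r^n$ term absorbs the error coming from the Lipschitz dependence of the coefficients on $(x,s)$ combined with $u\in C^{0,1/2}$.

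Finally, Lemma \ref{Lemma maggiore sommabilità} and Lemma \ref{Lemma maggiore sommabilità funzione confronto} provide $\nabla u,\nabla v\in L^{2s}$ for some $s>1$, and H\"older's inequality turns $\int_S(|\nabla u|^2+|\nabla v|^2)\,dx$ into $C\varepsilon_0^{1-1/s}\int_{B_{2r}}(1+|\nabla u|^2)\,dx$. Combining the triangle inequality $\int_{B_{\tau r}}|\nabla u|^2\leq 2\int_{B_{\tau r}}|\nabla v|^2+2\int_{B_r}|\nabla(u-v)|^2$ with the decay for $v$ yields
\[
\int_{B_{\tau r}(x_0)}|\nabla u|^2\,dx\leq C_0\bigl(\tau^n+\varepsilon_0^{1-1/s}\bigr)\int_{B_r(x_0)}|\nabla u|^2\,dx+C_0 r^n,
\]
and choosing $\varepsilon_0(\tau)$ small enough that $C_0\varepsilon_0^{1-1/s}\leq\tau^n$ closes the estimate. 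The main obstacle is case (iii): verifying that the transmission coefficients genuinely fit the structure required by Lemma \ref{Lemma semispazio} with constants uniform in $u$, and carefully controlling the perimeter and $\Lambda$-contributions when moving $E$ to $H$ without leaking a non-negligible term across $\partial B_r$.
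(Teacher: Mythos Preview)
There is a genuine gap in the comparison step. You propose to test the $\Lambda$-minimality of $(E,u)$ against a pair $(F,V)$ in which the set is also modified ($F=E\setminus B_r$, $E\cup B_r$, or $(E\setminus B_r)\cup(H\cap B_r)$). This brings the perimeter into the inequality, and the resulting term cannot be absorbed. In cases (i)--(ii) the coarea selection of a good radius $\rho$ only gives $\mathcal H^{n-1}(\partial B_\rho\cap E)\le C\varepsilon_0 r^{n-1}$, and an $\varepsilon_0 r^{n-1}$ contribution is \emph{not} dominated by $C_0\tau^n\int_{B_r}|\nabla u|^2+C_0r^n$ (take $\nabla u\equiv 0$ and let $r\to 0$ with $\varepsilon_0=\varepsilon_0(\tau)$ fixed). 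Case (iii) is worse: replacing $E$ by $H$ inside $B_\rho$ produces $P(H,B_\rho)\sim r^{n-1}$ with no smallness factor at all, while the hypothesis $|(E\Delta H)\cap B_r|<\varepsilon_0|B_r|$ gives no control on $P(E,B_\rho)$ that could cancel it. So the displayed bound for $\int_{B_r}|\nabla u-\nabla v|^2$ with right-hand side $C\int_S(\dots)+C\Lambda|S|+Cr^n$ is not what one actually obtains from your test pair.

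The paper avoids this entirely by \emph{not} varying $E$. It uses only the local minimality of $u$ for fixed $E$, namely $\mathcal F(E,u;B_{r/2})\le\mathcal F(E,v;B_{r/2})$, and writes
\[
\mathcal F_0(u)-\mathcal F_0(v)\;\le\;\bigl[\mathcal F_0(u)-\mathcal F(E,u)\bigr]+\bigl[\mathcal F(E,v)-\mathcal F_0(v)\bigr],
\]
where $\mathcal F_0$ is the frozen comparison functional. The mismatch between $\mathbbm 1_E$ and the frozen configuration ($0$, $1$, or $\mathbbm 1_H$) then appears only as bulk integrals $\int_{E\cap B_{r/2}}G(x,u,\nabla u)\,dx$ and $\int_{E\cap B_{r/2}}G(x,v,\nabla v)\,dx$ (respectively over $(E\Delta H)\cap B_{r/2}$ in case (iii)), and these are controlled by higher integrability exactly as you outline; no perimeter ever enters. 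Your choice of comparison functional (Hölder-in-$x$ coefficients in (i)--(ii), and the transmission problem of Lemma~\ref{Lemma semispazio} in (iii)) and your use of Lemmas~\ref{Lemma maggiore sommabilità}--\ref{Lemma maggiore sommabilità funzione confronto} are fine; you only need to drop the set variation and run the $\omega$-minimizer argument with $E$ fixed.
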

\begin{proof}
Let us fix $B_r(x_0)\subset \subset \Omega$ and $0<\tau<1$. Without loss of generality, we may assume that $\tau < 1/4$ and $x_0=0$. We start proving \emph{(i)}, being the proof of \emph{(ii)} similar.
Let us define
\begin{equation*}
A^0_{ij}:=a_{ij}(x_0,u_{r/2}(x_0)),\quad B^0_i:=a_i(x_0,u_{r/2}(x_0)),\quad  f^0:=a(x_0,u_{r/2}(x_0)),
\end{equation*}
\begin{equation*}
F_0(\xi):=(A^0 \xi;\xi)+(B^0;\xi)+f^0.
\end{equation*}
Let us denote by $v$ the solution of the following problem
$$
\min
\left\{\mathcal{F}_0(w;B_{r/2})
: \; w=u \; \mbox{ on }
\partial B_{r/2}
\right\},
$$
where
\begin{equation*}
\mathcal{F}_0(w;B_{r/2}):=\int_{B_{r/2}}F_0(\nabla w)\,dx
\end{equation*}
We use now the following identity 
\begin{equation*}
(A^0 \xi;\xi)-(A^0 \eta;\eta)=(A^0 (\xi-\eta);(\xi-\eta))+2(A^0 \eta;\xi-\eta),
\end{equation*}
in order to deduce that
\begin{eqnarray}\label{EF0}
\nonumber &&\mathcal{F}_0(u)-\mathcal{F}_0(v)\\
\nonumber &&=\int_{B_{r/2}}\bigl[(A^0 \D u;\D u)-(A^0 \D v;\D v)\bigr]\,dx+\int_{B_{r/2}}(B^0;\D u-\D v)\,dx\\
&&=\int_{B_{r/2}}(A^0 (\D u-\D v);(\D u-\D v))\,dx\\
\nonumber &&+2\int_{B_{r/2}}(A^0 \D v;\D u-\D v)\,dx + 
\int_{B_{r/2}}(B^0;\D u-\D v)\,dx
\end{eqnarray}
By the Euler-Lagrange equation for $v$ we deduce that the sum of the last two integrals in the previous identity is zero being also $u=v$ on $\partial B_{r/2}$. Therefore, using the ellipticity assumption of $A^0$ we finally achieve that
\begin{equation}\label{EllF0}
\nu\int_{B_{r/2}}|\D u - \D v|^2\,dx\leq\mathcal{F}_0(u)-\mathcal{F}_0(v).
\end{equation}
We prove now that $u$ is an $\omega$-minimizer of $\mathcal{F}_0$. We start writing
\begin{eqnarray}
\label{omegamin}
\nonumber &&\mathcal{F}_0(u)
 =\mathcal{F}(E,u)+[\mathcal{F}_0(u)-\mathcal{F}(E,u)]\\
&&\leq \mathcal{F}(E,v)+[\mathcal{F}_0(u)-\mathcal{F}(E,u)]\\
\nonumber && = \mathcal{F}_0(v)+[\mathcal{F}_0(u)-\mathcal{F}(E,u)]+[\mathcal{F}(E,v)-\mathcal{F}_0(v)].
\end{eqnarray}
\emph{Estimate of $\mathcal{F}_0(u)-\mathcal{F}(E,u)$}. We use \eqref{ellipticity2}, \eqref{hoelderu} and \eqref{hoelderu} to infer
\begin{equation}
\label{eqq6}
\begin{split}
&\mathcal{F}_0(u)-\mathcal{F}(E,u)=\int_{B_{r/2}}
\bigl(a_{ij}(x_0,u_{r/2}(x_0))-a_{ij}(x,u(x))\bigr)\D_iu\D_ju\,dx\\
&+\int_{B_{r/2}}\bigl(a_i(x_0,u_{r/2}(x_0))-a_i(x,u(x))\bigr)\D_iu\,dx \\
&+\int_{B_{r/2}}\bigl(a(x_0,u_{r/2}(x_0))-a(x,u(x))\bigr)\,dx -\int_{B_{r/2}\cap E}G(x,u,\D u)\,dx\\
&\leq 2L_D\norm{\D u}_{L^2(\Omega)}\biggl(r^{\frac 12}\int_{B_{r/2}}|\D u|^2\,dx+r^{n+\frac 12}\biggr)+C(N,L)\int_{B_{r/2}\cap E}|\D u|^2\,dx +2Lr^n
\end{split}
\end{equation}
where we denoted $L_D$ the greatest lipschitz constant of the data 
$a_{ij},b_{ij}, a_i,b_i,a,b$ defined in \eqref{Hoelderianity}.
Now we use Lemma \ref{Lemma maggiore sommabilità} to estimate
\begin{eqnarray}\label{improvement}
\nonumber&&\int_{B_{r/2}\cap E}|\D u|^2\,dx\leq |E\cap B_r|^{1-1/s}|B_r|^{1/s}\biggl(\fint_{B_{r/2}}|\D u|^{2s}\biggr)^{1/s}\\
&&\leq C_2^{1/s}\biggl(\frac{|E\cap B_r|}{|B_r|}\biggr)^{1-1/s}\int_{B_r}\big(1+|\D u|^2\big)\,dx.
\end{eqnarray}
Merging the last estimate in $\eqref{eqq6}$ we deduce
\begin{eqnarray}\label{E0}
\nonumber &&\mathcal{F}_0(u)-\mathcal{F}(E,u)
\leq\Bigl(L_D\norm{\D u}_{L^2(\Omega)}
+C(N,L)C_2^{1/s}\Bigr)
\Bigl(r^{\frac 12}+\varepsilon_0^{1-1/s}\Bigr)
\int_{B_r}|\D u|^2\,dx\\
&&+ \Bigl(C_{2}^{1/s}+2L+L_D\norm{\D u}_{L^2(\Omega)}\Bigr)r^n
\end{eqnarray}
{\em Estimate of $\mathcal{F}(E,v)-\mathcal{F}_0(v)$}.
\begin{eqnarray}\label{E1}
\nonumber &&\mathcal{F}(E,v)-\mathcal{F}_0(v)=\int_{B_{r/2}}
\bigl(a_{ij}(x,v(x))-a_{ij}(x_0,u_{r/2}(x_0))\bigr)\D_iv\D_jv\,dx\\
&&+\int_{B_{r/2}}\bigl(a_i(x,v(x))-a_i(x_0,u_{r/2}(x_0))\bigr)\D_iv\,dx \\
\nonumber && +\int_{B_{r/2}}\bigl(a(x,v(x))-a(x_0,u_{r/2}(x_0))\bigr)\,dx
+\int_{B_{r/2}\cap E}G(x,v,\D v)\,dx.
\end{eqnarray}
If we choose now $z\in\partial B_{r/2}$, recalling that $u(z)=v(z)$ we deduce
\begin{eqnarray*}
&&\bigl|a_{ij}(x,v(x))-a_{ij}(x_0,u_{r/2}(x_0))\bigr|\\
&&=\bigl|a_{ij}(x,v(x))-
a_{ij}(x,v(z))+a_{ij}(x,u(z))-a_{ij}(x_0,u_{r/2}(x_0))\bigr|\\
&&\leq L_D\big(|v(x)-v(z)|+r^{\frac 12}\norm{\nabla u}_{L^2(\Omega)}+r\big)\\
&&\leq L_D\big(\text{osc}(u,\partial B_{r/2})+C(n,\nu,N,L)r+r^{\frac 12}\norm{\nabla u}_{L^2(\Omega)}+r\big)\\
&&\leq C(n,\nu,N,L,L_D,\norm{\D u}_{L^2(\Omega)})r^{\frac 12}
\end{eqnarray*}
where we used the fact that $\text{osc}(v,B_{r/2})\leq \text{osc}(u,\partial B_{r/2})+C(n,\nu,N,L)r$,  (see \cite[Lemma 8.4]{Giu}). Analogously we can estimate the other difference in $\eqref{E1}$, deducing
\begin{eqnarray*}
&&\mathcal{F}(E,v)-\mathcal{F}_0(v)\leq C(n,\nu,N,L,L_D,\norm{\D u}_{L^2(\Omega)}) r^{\frac 12}\biggl(\int_{B_{r/2}}|\D v|^2\,dx+ r^n\biggr)\\
&&+C(N,L)\biggl(\int_{B_{r/2}\cap E}|\D v|^2\,dx +r^n\biggr),
\end{eqnarray*}
Reasoning in a similar way as in \eqref{improvement}, we can apply the higher integrability for $v$ given by Lemma \ref{Lemma maggiore sommabilità funzione confronto} and infer
$$
\int_{B_{r/2}\cap E}|\D v|^2\,dx \leq C(n,\nu,N,L)\varepsilon_0^{1-1/s}\biggl(\int_{B_{r}}|\D u|^2\,dx+r^n\biggr).
$$
Therefore we obtain
\begin{eqnarray}\label{E2}
&&\mathcal{F}(E,v)-\mathcal{F}_0(v)\\
\nonumber &&\leq C(n,\nu,N,L,L_D,\norm{\D u}_{L^2(\Omega)})\biggl[\Bigl(r^{\frac 12}+\varepsilon_0^{1-1/s}\Bigr)\int_{B_r}|\D u|^2\,dx + r^n\biggr].
\end{eqnarray}
Finally, collecting $\eqref{EllF0}$, $\eqref{omegamin}$, $\eqref{E0}$ and $\eqref{E2}$, if we choose $\varepsilon_0$ such that $\varepsilon_0^{1-\frac{1}{s}}=\tau^n$ we conclude that
\begin{equation}\label{FEstimate}
\int_{B_{r/2}}|\D u - \D v|^2\,dx\leq C \tau^n\int_{B_r}|\D u|^2\,dx+ Cr^n,
\end{equation}
for some constant $C=C(n,\nu,N,L,L_D,\norm{\D u}_{L^2(\Omega)})$. On the other hand $v$ is the solution of a uniform elliptic equation with constant coefficients, so we have
\begin{equation}\label{FEstimate1}
\int_{B_{\tau r}}|\D v|^2\,dx\leq C_1(n,\nu,N)\tau^n \int_{B_{r/2}}|\D v|^2\,dx\leq C_2(n,\nu,N)\tau^n \int_{B_{r/2}}|\D u|^2\,dx.
\end{equation}
Hence we may estimate, using \eqref{FEstimate} and \eqref{FEstimate1}
\begin{eqnarray}
\int_{B_{\tau r}}|\D u|^2\,dx&\leq& 2 \int_{B_{\tau r}}|\D v -\D u|^2\,dx +2 \int_{B_{\tau r}}|\D v|^2\,dx\\
&\leq& C_0 \tau^n\int_{B_r}|\D u|^2\,dx+ C_0r^n,
\end{eqnarray}
for some constant $C_0=C_0(n,\nu,N,L,L_D,\norm{\D u}_{L^2(\Omega)})$.\\
We are left with the case \emph{(iii)}. Let $H$ be the half space from the assumption and let us denote accordingly
\begin{eqnarray*}
&& A^0_{ij}(x):=a_{ij}(x,u(x))+\mathbbm{1}_{H}b_{ij}(x,u(x)) ,\\
&& B^0_{ij}(x):=a_i(x,u(x))+\mathbbm{1}_{H}b_i(x,u(x)),\\
&&f^0(x):= a(x,u(x))+\mathbbm{1}_{H} b(x,u(x)),\\
&&F_0(x,\xi):=(A^0(x) \xi;\xi)+(B^0(x);\xi)+f^0(x).
\end{eqnarray*}
Let us denote by $v_H$ the solution of the following problem
$$
\min
\left\{\mathcal{F}_0(w;B_{r/2})
: \; w=u \; \mbox{ on }
\partial B_{r/2}
\right\},
$$
where
\begin{equation*}
\mathcal{F}_0(w;B_{r/2}):=\int_{B_{r/2}}F_0(x,\nabla w)\,dx.
\end{equation*}
Let us point out that $v_H$ solves the Euler-Lagrange equation
\begin{equation}\label{MV}
-2\textnormal{div}(A^0\D v_H)=\textnormal{div}\,B^0 \quad \text{in }\mathcal{D}'(B_{r/2}).
\end{equation}
Therefore we are in position to apply Lemma \ref{Lemma semispazio} to the function $v_H$. As a matter of fact, from the H\"older continuity of $u(x)$ (see Remark \ref{Osservazione Holderianita}) we deduce that the restrictions of $A^0$ and $B^0$ onto $H\cap B_r$ and $B_r\setminus H$ respectively, are H\"older continuous. We can conclude using also $\eqref{hoelderu}$ that there exist two constants $C=C(n,\nu,N,L,L_D,\norm{\D u}_{L^2(\Omega)})$ and $\tau_0=\tau_0(n,\nu,N,L,L_D,\norm{\D u}_{L^2(\Omega)})$ such that for $\tau<\tau_0$,
\begin{equation}\label{Decayvh}
\int_{B_{\tau r}}
|\D v_H|^2\,dx\leq 
C\tau^n\int_{B_{r/2}}
|\D v_H|^2\,dx + Cr^n.
\end{equation}
In addition, using the ellipticity condition of $A^0$ we can argue as in $\eqref{EF0}$ to deduce using also the fact that $v_H$ satisfies $\eqref{MV}$,
\begin{equation}\label{EllF1}
\nu\int_{B_{r/2}}|\D u - \D v_H|^2\,dx\leq\mathcal{F}_0(u)-\mathcal{F}_0(v_H).
\end{equation}
One more time we can prove that $u$ is an $\omega$- minimizer of $\mathcal{F}_0$. We start as above writing
\begin{eqnarray}
\label{omegaminH}
\nonumber &&\mathcal{F}_0(u)
 =\mathcal{F}(E,u)+[\mathcal{F}_0(u)-\mathcal{F}(E,u)]\\
&&\leq \mathcal{F}(E,v_H)+[\mathcal{F}_0(u)-\mathcal{F}(E,u)]\\
\nonumber && = \mathcal{F}_0(v_H)+[\mathcal{F}_0(u)-\mathcal{F}(E,u)]+[\mathcal{F}(E,v_H)-\mathcal{F}_0(v_H)].
\end{eqnarray}
We can estimate the differences $\mathcal{F}_0(u)-\mathcal{F}(E,u)$ and $\mathcal{F}(E,v_H)-\mathcal{F}_0(v_H)$ exactly as before using this time the higher integrability given in Lemma \ref{Lemma maggiore sommabilità funzione confronto}.
We conclude that
\begin{equation}\label{FEHstimate}
\int_{B_{r/2}}|\D u - \D v_H|^2\,dx\leq C \tau^n\int_{B_r}|\D u|^2\,dx+ Cr^n,
\end{equation}
for some constant $C=C(n,\nu,N,L,L_D,\norm{\D u}_{L^2(\Omega)})$. From the last estimate we can conclude the proof as before using \eqref{Decayvh} and \eqref{EllF1}.
\end{proof}
\bigskip
\section{Energy density estimates}
This section is devoted to prove a lower bound esimate for the functional $ {\mathcal F}(E,u;B_r (x_0))$. Lemma \ref{Lemma decadimento 1} is the main tool to achieve such result. We shall prove that the energy $\mathcal F$ decays ``fast'' if the perimeter of $E$ is ``small''. 

\begin{lemma}
\label{Lemma decadimento 2} Let $(u,E)$ be a $\Lambda$-minimizer of the functional ${\mathcal F}$ defined in \eqref{intro0}. For every $\tau\in (0,1)$ there exists $\varepsilon=\varepsilon_1(\tau)>0$ such that, if $B_r(x_0)\subset \Omega$ and $P(E;B_r(x_0))<\varepsilon_1 r^{n-1}$, then
\begin{equation}\label{D}
\mathcal F(E,u;B_{\tau r}(x_0))\leq C_1 \tau^n\bigl(\mathcal F(E,u;B_r(x_0))+r^n\bigr),
\end{equation}
for some constant $C_1=C_1(n,\nu,N,L,L_D,\norm{\D u}_{L^2(\Omega)})$ independent of $\tau$ and $r$.
\end{lemma}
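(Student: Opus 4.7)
My plan begins with the observation that the hypothesis $P(E;B_r(x_0))<\varepsilon_1 r^{n-1}$ is so restrictive that, via the relative isoperimetric inequality in $B_r(x_0)$,
$$\min\{|E\cap B_r(x_0)|,\,|B_r(x_0)\setminus E|\}\leq c(n)P(E;B_r(x_0))^{n/(n-1)}\leq c(n)\varepsilon_1^{n/(n-1)}r^n.$$
Hence for $\varepsilon_1=\varepsilon_1(\tau)$ sufficiently small, the left-hand side is below the threshold $\varepsilon_0(\tau)|B_r|$ required in cases (i)--(ii) of Lemma \ref{Lemma decadimento 1}, so that lemma applies (assume WLOG we are in case (i)) and yields
$$\int_{B_{\tau r}(x_0)}|\D u|^2\,dx\leq C_0\tau^n\int_{B_r(x_0)}|\D u|^2\,dx+C_0 r^n.$$
Combined with the growth conditions \eqref{ellipticity1}--\eqref{ellipticity2}, which give $|F|+|G|\leq C(1+|\D u|^2)$, this controls the bulk part of $\mathcal F(E,u;B_{\tau r}(x_0))$ by a multiple of $\tau^n(\mathcal F(E,u;B_r(x_0))+r^n)$.

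It remains to control the perimeter $P(E;B_{\tau r}(x_0))$, and for this I would use the $\Lambda$-minimality by comparing $(E,u)$ against the admissible test pair $(E\setminus B_s(x_0),u)$ for a carefully selected radius $s\in(\tau r,2\tau r)$. Setting $m(t):=|E\cap B_t(x_0)|$, the averaging identity
$$\int_{\tau r}^{2\tau r}m'(t)\,dt = m(2\tau r)-m(\tau r)\leq c(n)\varepsilon_1^{n/(n-1)}r^n$$
produces some $s\in(\tau r,2\tau r)$ with $\mathcal H^{n-1}(E\cap\partial B_s)=m'(s)\leq c(n)\varepsilon_1^{n/(n-1)}r^{n-1}/\tau$. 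Applying the slicing identity $P(E;B_r)-P(E\setminus B_s;B_r)=P(E;B_s)-\mathcal H^{n-1}(E^{(1)}\cap\partial B_s)$, valid for a.e.\ $s$, the $\Lambda$-minimality condition rearranges to
$$P(E;B_s)\leq \mathcal H^{n-1}(E\cap\partial B_s)+\int_{E\cap B_s}|G(x,u,\D u)|\,dx+\Lambda\,|E\cap B_s|.$$
The first term is bounded by $C\tau^n r^{n-1}$ provided $\varepsilon_1(\tau)$ is chosen so that $\varepsilon_1^{n/(n-1)}\leq\tau^{n+1}$; the integral of $|G|$ is handled by H\"older's inequality and the higher integrability of Lemma \ref{Lemma maggiore sommabilità}, exactly as in \eqref{improvement}, yielding
$$\int_{E\cap B_s}|\D u|^2\,dx\leq C\,|E\cap B_s|^{1-1/s^*}\int_{B_r}(1+|\D u|^2)\,dx,$$
which is again $\leq C\tau^n r^{n-1}$ for $\varepsilon_1$ small; and the last term is handled trivially by the isoperimetric estimate on $|E\cap B_s|$. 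Since $P(E;B_{\tau r}(x_0))\leq P(E;B_s)$, we obtain the perimeter decay $P(E;B_{\tau r}(x_0))\leq C\tau^n r^{n-1}$.

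Adding the bulk and perimeter contributions, and using $|B_{\tau r}|=\omega_n(\tau r)^n$, gives the desired inequality $\mathcal F(E,u;B_{\tau r}(x_0))\leq C_1\tau^n(\mathcal F(E,u;B_r(x_0))+r^n)$. The main obstacle is the simultaneous calibration of the single parameter $\varepsilon_1=\varepsilon_1(\tau)$: it must be small enough to activate the decay in Lemma \ref{Lemma decadimento 1} (forcing $\varepsilon_1^{n/(n-1)}\leq\omega_n\varepsilon_0(\tau)$) \emph{and} small enough to absorb the unfavourable factor $1/\tau$ appearing in the mean-value choice of $s$ (forcing $\varepsilon_1^{n/(n-1)}\leq\tau^{n+1}$). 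A secondary point is that the additive $r^n$ error from Lemma \ref{Lemma decadimento 1} fits into the stated conclusion only via the smallness of $r$ inherited from the hypothesis $r^{1/(2n)}<\tau$ of that lemma, which must implicitly be assumed here as well.
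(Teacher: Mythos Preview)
Your proposal is correct and follows essentially the same strategy as the paper's proof: relative isoperimetric inequality to get small density, Fubini/averaging on $(\tau r,2\tau r)$ to select a good radius, comparison with a ball-modified competitor to control the perimeter, and Lemma~\ref{Lemma decadimento 1} for the bulk part. The only cosmetic differences are that the paper argues in the symmetric case (ii) (so it tests with $E\cup B_\rho$ rather than your $E\setminus B_s$) and bounds the $G$-correction $\int_{B_\rho}G$ by a second application of Lemma~\ref{Lemma decadimento 1} rather than by H\"older plus higher integrability as you do; both routes yield the same $\tau^n$ gain, and your remark about the implicit smallness hypothesis $r^{1/(2n)}<\tau$ inherited from Lemma~\ref{Lemma decadimento 1} is well taken.
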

\begin{proof}
Without loss of generality we may assume that $\tau<\frac 12$. We can also assume that $x_0=0,\ r=1$ by introducing the following rescaling, $E_r=\frac{E-x_0}{r}$, $u_r=r^{-\frac 12}{u(x_0+ry)}$ and replacing $\Lambda$ with $\Lambda r$. Thus, we have that $(E_r,u_r)$ is a $\Lambda r$-minimizer of $\mathcal F$ in $\frac{\Omega-x_0}{r}$. For simplicity of notation we can still denote $E_r$ by $E$, $u_r$ by $u$ and then we have to prove that, given $0<\tau<\frac 12$, there exists $\varepsilon_1=\varepsilon_1(\tau)$ such that, if $P(E;B_1)<\varepsilon_1$, then
$$
\mathcal F(E,u;B_{\tau })\leq C_1 \tau^n\Bigl(\mathcal F(E,u;B_1)+r\Bigr).
$$
Note that, since $P(E;B_1)$ is small, holds, by the relative isoperimetric inequality $|B_1\cap E|$ or $|B_1\setminus E|$ is small. Thus Lemma \ref{Lemma decadimento 1} holds. Assuming that $|B_1\setminus E|$ is small and using the relative isoperimetric inequality we can deduce that,
$$
|B_1\setminus E|\leq c(n)P(E;B_1)^{\frac{n}{n-1}}.
$$
If we choose as a representative of $E$ the set of points of density one, we get, by Fubini's theorem,
$$
|B_1\setminus E|\geq \int_{\tau}^{2\tau}\mathcal H^{n-1}(\partial B_\rho\setminus E)\ d\rho.
$$
Combining these inequalities we can choose $\rho\in (\tau, 2\tau)$ such that
\begin{equation}\label{perimeter}
\mathcal{H}^{n-1}(\partial B_\rho\setminus E)\leq \frac{c(n)}{\tau}P(E;B_1)^{\frac{n}{n-1}}\leq \frac{c(n)\varepsilon_1^{\frac{1}{n-1}}}{\tau}P(E;B_1).
\end{equation}
Now we set $F=E\cup B_\rho$ and observe that
$$
P(F;B_1)\leq P(E;B_1\setminus \overline{B}_{\rho})+\mathcal H^{n-1}(\partial B_{\rho}\setminus E).
$$
If we choose $(u,F)$ to test the $\Lambda r$-minimality of $(u,E)$ we get
\begin{eqnarray*}
&& P(E;B_1)+\int_{B_1}\Bigl ( F(x,u,\nabla u)+\mathbbm{1}_{E}G(x,u,\nabla u)\Bigr )\,dx\leq \\
&\leq& P(F;B_1)+ \int_{B_1}\Bigl ( F(x,u,\nabla u)+\mathbbm{1}_{F}G(x,u,\nabla u)\Bigr )\,dx + \Lambda r|F\setminus E|\\
&\leq& P(E;B_1\setminus \overline{B}_{\rho})+\mathcal H^{n-1}(\partial B_{\rho}\setminus E)+ \int_{B_1}\Bigl ( F(x,u,\nabla u)+\mathbbm{1}_{F}G(x,u,\nabla u)\Bigr )\,dx + \Lambda r |B_{\rho}|.
\end{eqnarray*}
Then getting rid of the common terms we obtain
\begin{equation*}
P(E;B_\rho)\leq \mathcal H^{n-1}(\partial B_{\rho}\setminus E)+ \int_{B_{\rho}} G(x,u,\nabla u)\,dx + \Lambda r |B_{\rho}|.
\end{equation*}
Now if we choose $\varepsilon_1$ such that $c(n)\varepsilon_1^{\frac{1}{n-1}}\leq \tau^{n+1}$ we have from \eqref{perimeter}
\begin{equation*}
P(E;B_\rho)\leq \tau^n P(E;B_1)+ \int_{B_{\rho}} G(x,u,\nabla u)\,dx + \Lambda r |B_{\rho}|.
\end{equation*}
Then we choose $\varepsilon_1$ satisfying $c(n)\varepsilon_1^{\frac{n}{n-1}} \leq \varepsilon_0(2\tau)|B_1|$ to obtain using Lemma \ref{Lemma decadimento 1} and growth conditions \eqref{ellipticity1}, \eqref{ellipticity2},
\begin{eqnarray*}
&&\int_{B_{\rho}} G(x,u,\nabla u)\,dx \leq C(N,L)\int_{B_{\rho}} (1+|\nabla u|^2)\,dx\\
&&\leq C(n,\nu,N,L,L_D,\norm{\D u}_{L^2(\Omega)}) \tau^n\int_{B_{1}}
(1+|\nabla u|^2)\,dx\\
&\leq& C(n,\nu,N,L,L_D,\norm{\D u}_{L^2(\Omega)}) \tau^n\int_{B_{1}} G(x,u,\nabla u)\,dx.
\end{eqnarray*}
Finally, we recall that $\rho\in (\tau,2\tau)$ to get
\begin{equation*}
P(E;B_\tau)\leq \tau^n P(E;B_1)+ C(n,\nu,N,L,L_D,\norm{\D u}_{L^2(\Omega)})\tau^n \int_{B_{1}} G(x,u,\nabla u)\,dx + \Lambda r |B_{2\tau}|.
\end{equation*}
From this estimate the result easily follows.
\end{proof}
\begin{theorem}[Density lower bound]
\label{Density lower bound}
Let $(u,E)$ be a $\Lambda$-minimizer of $\mathcal{F}$ and $U\subset \subset \Omega$. There exists a costant $C$ depending only on $n,\nu,N,L,L_D,\norm{\D u}_{L^2(\Omega)}$,
such that for every $x_0\in \partial E$ and $B_r(x_0)\subset U$,
$$
P(E,B_r(x_0))\geq Cr^{n-1}.
$$
Moreover, $\mathcal{H}^{n-1}((\partial E\setminus \partial^*E)\cap \Omega)=0$.
\end{theorem}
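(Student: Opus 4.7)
The plan is to argue as in the classical density-lower-bound theory for quasi-minimal perimeters, by contradiction and via iteration of Lemma \ref{Lemma decadimento 2}: I will show that if $P(E,B_r(x_0))$ were too small compared with $r^{n-1}$ at some scale, then the measure-theoretic density of $E$ at $x_0$ would be $0$ or $1$, contradicting $x_0\in\partial E$.

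Fix $\tau\in(0,1/2)$ with $(C_1+1)\tau\le\frac12$ (here $C_1$ is the constant in Lemma \ref{Lemma decadimento 2}), set $\varepsilon_1:=\varepsilon_1(\tau)$ and write $m(\rho):=\mathcal F(E,u;B_\rho(x_0))+\rho^n$. The ellipticity and boundedness assumptions \eqref{ellipticity1} and \eqref{ellipticity2} yield $F+\mathbbm{1}_E G\ge -c(\nu,N,L)$ pointwise, so
\[
P(E,B_\rho)\;\le\;\mathcal F(E,u;B_\rho)+c\rho^n\;\le\;m(\rho)+c\rho^n.
\]
Assuming by contradiction that $x_0\in\partial E$ and $m(r)/r^{n-1}\le\delta$ for some small $r$ and some threshold $\delta=\delta(\tau,\varepsilon_1,c)$ to be chosen, the hypothesis $P(E,B_r)<\varepsilon_1 r^{n-1}$ is met and Lemma \ref{Lemma decadimento 2} gives
\[
m(\tau r)\;\le\;(C_1+1)\tau^n\,m(r)\;\le\;\tfrac12\,\tau^{n-1}\,m(r),
\]
so the ratio $m(\rho)/\rho^{n-1}$ halves when $\rho$ is replaced by $\tau\rho$. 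Iterating and interpolating at intermediate radii by the monotonicity of $m$ in $\rho$, I obtain $m(\rho)/\rho^{n-1}\to 0$ as $\rho\to 0^+$, and hence $P(E,B_\rho(x_0))/\rho^{n-1}\to 0$. The relative isoperimetric inequality then forces $\min\{|E\cap B_\rho(x_0)|,|B_\rho(x_0)\setminus E|\}=o(\rho^n)$. Since $\rho\mapsto|E\cap B_\rho(x_0)|/|B_\rho|$ is continuous in $\rho$ and this minimum is infinitesimal at every small scale, the ratio cannot oscillate between values near $0$ and near $1$; consequently, the density of $E$ at $x_0$ exists and equals $0$ or $1$, contradicting $x_0\in\partial E$ (for the standard choice of representative). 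Thus $m(r)/r^{n-1}\ge\delta$ for $r$ small, and, absorbing the $c\rho^n$ correction, $P(E,B_r(x_0))\ge C r^{n-1}$; for $r$ bounded away from $0$ inside $U$ the bound is automatic.

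For the second assertion, the density lower bound just proved forces the Radon measure $\mu:=|D\mathbbm{1}_E|$ to have strictly positive upper $(n-1)$-density at every point of $\partial E\cap\Omega$. Since $\mu=\mathcal H^{n-1}\llcorner\partial^*E$ by De Giorgi's structure theorem, a standard density-comparison principle (see e.g.\ \cite[Theorem 2.56]{AFP}) applied to $\mathcal H^{n-1}\llcorner((\partial E\setminus\partial^*E)\cap\Omega)$ shows that this set is $\mathcal H^{n-1}$-negligible. The main obstacle is the bookkeeping in the iteration: the choice $(C_1+1)\tau\le\frac12$ is precisely what guarantees that the smallness of $m(\rho)/\rho^{n-1}$ is preserved from scale $\rho$ to scale $\tau\rho$, enabling the infinite iteration.
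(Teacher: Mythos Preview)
Your overall strategy---iterating Lemma~\ref{Lemma decadimento 2} to force the perimeter density to vanish and contradict $x_0\in\partial E$---is exactly the standard route that the paper invokes (it simply cites \cite[Proposition~4.4]{FJ} and \cite[Theorem~7.21]{AFP}). However, there is a genuine gap in your final step. You set up the contradiction by assuming $m(r)/r^{n-1}\le\delta$ and conclude, after iteration, that $m(r)/r^{n-1}\ge\delta$ for all small $r$. But $m(\rho)=\mathcal F(E,u;B_\rho)+\rho^n$ contains the full bulk integral, which by Theorem~\ref{Energy upper bound} is only $O(\rho^{n-1})$, not $o(\rho^{n-1})$. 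Hence the lower bound $m(r)\ge\delta r^{n-1}$ does \emph{not} imply $P(E,B_r)\ge C r^{n-1}$: you cannot ``absorb the $c\rho^n$ correction'' to pass from a lower bound on $m$ to one on $P$, because the bulk term can itself account for all of $\delta r^{n-1}$.

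The fix is to run the contradiction directly on the perimeter. Assume $P(E,B_{r_0}(x_0))<\varepsilon_1 r_0^{n-1}$, invoke the energy upper bound (Theorem~\ref{Energy upper bound}) to get $\mathcal F(E,u;B_{r_0})\le C_U r_0^{n-1}$, and then iterate Lemma~\ref{Lemma decadimento 2}: with $\tau$ chosen so that $C_1\tau(C_U+1)$ is sufficiently small, each step yields $P(E,B_{\tau^k r_0})\le\mathcal F(E,u;B_{\tau^k r_0})+c(\tau^k r_0)^n<\varepsilon_1(\tau^k r_0)^{n-1}$, so the hypothesis of the lemma is reproduced at every scale and you reach the same contradiction. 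The conclusion is then directly $P(E,B_r(x_0))\ge\varepsilon_1 r^{n-1}$, which is what the theorem claims. A smaller point: your inequality $m(\tau r)\le(C_1+1)\tau^n m(r)$ tacitly uses $r^n\le m(r)$, i.e.\ $\mathcal F(E,u;B_r)\ge 0$, which is not guaranteed here since $F+\mathbbm{1}_E G$ can be negative; this is harmless and is repaired by adding a larger multiple of $\rho^n$ in the definition of $m$. Your argument for $\mathcal H^{n-1}((\partial E\setminus\partial^*E)\cap\Omega)=0$ is correct.
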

\begin{proof}
The proof follows from Lemma \ref{Lemma decadimento 1} and Lemma \ref{Lemma decadimento 2} in a standard way, see \cite[Proposition 4.4]{FJ} or \cite[Theorem 7.21]{AFP}.
\end{proof}


\section{Compactness for sequences of minimizers}
In this section we basically follow the route given in \cite[Part III]{Ma}.
We start proving a standard compactness result.
\begin{lemma}[Compactness]
\label{Lemma compattezza}
Let $(E_h,u_h)$ be a sequence of $\Lambda_h$-minimizers of $\mathcal{F}$ in $\Omega$ such that $\sup_h \mathcal F(E,u_h;\Omega)<+\infty$ and 
$\Lambda_h\rightarrow \Lambda\in \mathbb R^+$. There exists a (not relabelled) subsequence and a $\Lambda$-minimizer  of 
$\mathcal F$, $(u,E)$,  such that for every open set $U\subset \subset \Omega$, it holds
$$
E_h\rightarrow E \mbox { in } L^1(U),\;\; u_h\rightarrow u \mbox { in } H^{1}(U),\;\; P(E_h,U)\rightarrow P(E,U).
$$
In addition,
\begin{eqnarray}
&& \label{boundary1} \mbox{if }x_h\in \partial E_h\cap U \mbox{ and } x_h\rightarrow x \in U, \mbox { then } x\in \partial E \cap U,\\
&& \label{boundary2}\mbox{if }x\in \partial E\cap U \mbox{ there exists } x_h\in \partial E_h\cap U \mbox{ such that } x_h\rightarrow x.
\end{eqnarray}
Finally if we assume also $\nabla u_h \rightharpoonup 0$ weakly in $L^2_{loc}(\Omega,\mathbb R^n)$ and $\Lambda_h \rightarrow 0$, then $E$ is a local minimizer 
of the perimeter, that is
$$
P(E,B_r(x_0))\leq P(F,B_r(x_0)),
$$
for every $F$ such that $F\Delta E \subset\subset B_r(x_0)\subset \Omega.$
\end{lemma}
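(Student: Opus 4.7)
The plan is to follow the standard compactness scheme for $\Lambda$-minimizers with perimeter penalization, in four steps: extraction of weak limits, upgrade to strong convergences, passage to the limit in the minimality inequality (which also handles the boundary statements via Theorem \ref{Density lower bound}), and finally the special case under the extra hypotheses.

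\emph{Step 1 (Weak compactness).} From $\sup_h\mathcal{F}(E_h,u_h;\Omega)<+\infty$, the pointwise bound $F(x,u,\nabla u)+\mathbbm{1}_E G(x,u,\nabla u)\geq \nu|\nabla u|^2-C(\nu,L)$ obtained from \eqref{ellipticity1}-\eqref{ellipticity2} forces $\sup_h(\|\nabla u_h\|_{L^2(\Omega)}+P(E_h;\Omega))<+\infty$. BV and weak $H^1$ compactness then produce, along a not relabelled subsequence, a set $E$ of locally finite perimeter with $\mathbbm{1}_{E_h}\to\mathbbm{1}_E$ in $L^1_{loc}(\Omega)$ and $\nabla u_h\rightharpoonup\nabla u$ weakly in $L^2_{loc}(\Omega;\mathbb R^n)$; lower semicontinuity yields $P(E,U)\leq \liminf_h P(E_h,U)$ for every $U\subset\subset\Omega$.

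\emph{Step 2 (Strong convergences).} Fix $B_r\subset\subset U$. To promote weak to strong $H^1_{loc}$-convergence of $u_h$, build the standard cut-off competitor $w=u+(1-\eta)(u_h-u)$ with $\eta\in C^\infty_c(B_r)$ equal to $1$ on $B_{r/2}$, and plug $(E_h,w)$ into the $\Lambda_h$-minimality of $(E_h,u_h)$; the ellipticity of the quadratic part of $F+\mathbbm{1}_{E_h}G$, together with the structural form \eqref{structure1}-\eqref{structure2} and the bounds on lower-order coefficients, gives after rearrangement and passage to $\limsup$ that $\nabla u_h\to\nabla u$ strongly in $L^2(B_{r/2})$. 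For the perimeters, the matching upper bound $\limsup_h P(E_h,U)\leq P(E,U)$ is obtained by testing the $\Lambda_h$-minimality of $E_h$ against the competitor $(E\cap B_\rho)\cup(E_h\setminus\overline B_\rho)$, where $\rho$ is chosen via a Fubini-type selection on a good sphere where $|E_h\triangle E|$ is small (the same device used in the proof of Lemma~\ref{Lemma decadimento 2}); together with the lower-semicontinuity bound this gives $P(E_h,U)\to P(E,U)$.

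\emph{Step 3 ($\Lambda$-minimality of $(E,u)$ and boundary convergence).} Once all convergences are in place, the continuity of the coefficients in $(x,s)$, strong $L^2$-convergence of $\nabla u_h$, and dominated convergence let one pass to the limit in the $\Lambda_h$-minimality inequality for any admissible competitor $(F,v)$, using $\Lambda_h\to\Lambda$ to handle the volume term; this gives the $\Lambda$-minimality of $(E,u)$. Properties \eqref{boundary1} and \eqref{boundary2} are then direct consequences of the density lower bound Theorem~\ref{Density lower bound}: if $x_h\in\partial E_h$, $x_h\to x$, but $x\notin\partial E$, then $x$ lies in the measure-theoretic interior or exterior of $E$, so $L^1$-convergence transfers this to $E_h$ for large $h$, contradicting $P(E_h,B_\rho(x_h))\geq c\rho^{n-1}$; the converse implication uses the density lower bound applied to the limit $E$.

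\emph{Step 4 (Perimeter minimality under the extra hypotheses).} If $\nabla u_h\rightharpoonup 0$ weakly in $L^2_{loc}$, the strong convergence of Step~2 forces $\nabla u\equiv 0$, so $u$ is locally constant. Given a competitor $F$ with $F\triangle E\subset\subset B_r(x_0)\subset\Omega$, test the $\Lambda_h$-minimality of $(E_h,u_h)$ against $(F,u_h)$ (same function, different set); the $F(x,u_h,\nabla u_h)$ contribution cancels on both sides and, since $\nabla u_h\to 0$ strongly in $L^2_{loc}$, the quadratic and linear parts of $G$ vanish in the limit, while the remaining zeroth-order mismatch and the term $\Lambda_h|F\triangle E_h|$ both tend to zero (the latter by $\Lambda_h\to 0$, the former upon the usual blow-up rescaling in which this lemma is invoked, where the coefficients $a,b$ are absorbed into the infinitesimal scaling factor). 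The inequality $P(E,B_r(x_0))\leq P(F,B_r(x_0))$ then follows.

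The main obstacle is Step~2: the upgrade from weak to strong $H^1$-convergence and the matching upper bound on perimeters must be done simultaneously, because the bulk and perimeter terms interact through the $\Lambda_h$-minimality inequality, and care must be taken that the lower-order coefficients do not spoil the ellipticity-driven convergence.
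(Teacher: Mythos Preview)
Your approach is correct and uses the same ingredients as the paper (cut-off competitors, Fubini selection of a good radius, density lower bound for the boundary statements), but the logical order is inverted. The paper first proves the $\Lambda$-minimality of the limit pair using only \emph{weak} convergence: it takes an arbitrary competitor $(F,v)$, modifies it to $F_h=(F\cap B_\rho)\cup(E_h\setminus B_\rho)$ and $v_h=\psi v+(1-\psi)u_h$, and then exploits the \emph{convexity} of $F$ and $G$ in the gradient variable to split $F(x,v_h,\nabla v_h)$ into a convex combination plus a remainder that vanishes by weak convergence; lower semicontinuity on the left-hand side closes the argument. Only afterwards does the paper deduce strong $H^1$-convergence, by testing $(u_h,E_h)$ against $(u,E_h)$ and using ellipticity. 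Your route---strong convergence first, then pass to the limit in the minimality inequality---is more elementary once strong convergence is available, but your Step~3 as written has a gap: you cannot ``pass to the limit in the $\Lambda_h$-minimality inequality for any admissible competitor $(F,v)$'', because $(F,v)$ is \emph{not} admissible for $(E_h,u_h)$ (neither $F\triangle E_h\subset\subset B_r$ nor $v-u_h\in H^1_0(B_r)$ holds). You must perform the same competitor modification $F_h$, $v_h$ that the paper does, and control the boundary mismatch term $\mathcal H^{n-1}(\partial B_\rho\cap(F^{(1)}\triangle E_h^{(1)}))$ via the good-radius selection; you already describe this device in Step~2 for the perimeter, so it just needs to be invoked again here.

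Your remark in Step~4 about the zeroth-order term $b(x,u)$ is well taken: without the rescaling the contribution $\int(\mathbbm{1}_F-\mathbbm{1}_E)\,b(x,u)\,dx$ need not vanish, so the final claim as literally stated relies on the blow-up context in which the lemma is actually applied (where the lower-order coefficients carry factors $\sqrt{r_h}$ and $r_h$). The paper's proof does not spell this out either.
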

\begin{proof}
We start observing that, by the boundedness condition on $\mathcal F(E,u_h;\Omega)$, we can assume that, $u_h$ weakly converges to $u$ in $H^{1}(U)$ and strongly in $L^2(U)$, and 
$\mathbbm{1}_{E_h}$ converges to $\mathbbm{1}_{E}$ in $L^1(U)$. Using lower semicontinuity we are going to prove the $\Lambda$-minimality of $(u,E)$. Let us fix 
$B_r(x_0)\subset\subset\Omega$ and assume for simplicity of notation that $x_0=0$. Let $(v,F)$ be a test pair such that $F\Delta E \subset \subset B_r$ 
and supp$(u-v)\subset \subset B_r$. 
We can handle the perimeter term as in \cite{Ma}, eventually passing to a subsequence and using Fubini's theorem, we can choose $\rho<r$ such that, once again, 
$F\Delta E \subset \subset B_{\rho}$ and supp$(u-v)\subset \subset B_{\rho}$, and in addition,
$$
\mathcal{H}^{n-1}(\partial^*F\cap \partial B_{\rho})=\mathcal{H}^{n-1}(\partial^*E_h\cap \partial B_{\rho})=0,
$$
and
\begin{equation}
\label{mismatch}
\lim_{h\rightarrow 0}\mathcal{H}^{n-1}(\partial B_{\rho}\cap(E\Delta E_h))=0.
\end{equation}
Now we choose a cut-off function $\psi\in C_0^1(B_r)$ such that $\psi\equiv 1$ in $B_{\rho}$, and define $v_h=\psi v+(1-\psi)u_h$, 
$F_h:=(F\cap B_{\rho})\cup (E_h\setminus B_{\rho})$ to test the minimality of $(u_h,E_h)$. Thanks to the $\Lambda_h$-minimality of $(u_h,E_h)$ we have
\begin{eqnarray}\label{min}
&& \int_{B_r} \Bigl ( F(x,u_h,\nabla u_h)+\mathbbm{1}_{E_h}G(x,u_h,\nabla u_h)\Bigr )\,dx+ P(E_h,B_r)\leq\\
&&\nonumber \leq \int_{B_r} \Bigl ( F(x,v_h,\nabla v_h)+\mathbbm{1}_{F_h}G(x,v_h,\nabla v_h)\Bigr )\,dx+P(F_h,B_r)
+\Lambda_h|F_h\Delta E_h| \\
&&\nonumber \leq \int_{B_r} \Bigl (F(x,v_h,\nabla v_h)+\mathbbm{1}_{F_h}G(x,v_h,\nabla v_h)\Bigr )\,dx +P(F,B_{\rho})+\Lambda_h|F_h\Delta E_h| \\
&& +P(E_h,B_r\setminus \overline{B}_{\rho})+\varepsilon_h.\nonumber
\end{eqnarray}
The mismatch term $\varepsilon_h=\mathcal{H}^{n-1}(\partial B_{\rho}\cap (F^{(1)}\Delta E_{h}^{(1)}))$ appears because $F$ is not in general a compact variation of $E_h$. Nevertheless we have that  $\varepsilon_h\rightarrow 0$ because of the assumption $\eqref{mismatch}$ (see also \cite[Theorem 21.14]{Ma}).\\
Now we use the convexity of $F$ and $G$ with respect to the $z$ variabile to deduce
\begin{eqnarray*}
&& \int_{B_r} \Bigl ( F(x,v_h,\nabla v_h)+\mathbbm{1}_{F_h}G(x,v_h,\nabla v_h)\Bigr )\,dx\\
&& \leq \int_{B_r} \Bigl ( F(x,v_h,\psi \nabla v+(1-\psi)\nabla u_h)+\mathbbm{1}_{F_h}G(x,v_h,\psi \nabla v+(1-\psi)\nabla u_h)\Bigr )\,dx\\
&& + \int_{B_r}  \left < \nabla_z F(x,v_h,\nabla v_h),\nabla \psi(v-u_h)\right >\,dx \\
&& + \int_{B_r}  \mathbbm{1}_{F_h} \left < \nabla_z G(x,v_h,\nabla v_h),\nabla \psi(v-u_h)\right >\,dx\\
\end{eqnarray*}
where last two terms in the previous estimate tend to zero for $h\rightarrow \infty$. As a matter of fact the term $\nabla \psi(v-u_h)$ strongly converges to zero in $L^2$, being $u=v$ in $B_r\setminus B_{\rho}$ and the first part in the scalar poduct weakly converges in $L^2$.
Then using again the convexity of $F$ and $G$ in the $z$ variable we obtain, for some infinitesimal $\sigma_h$,
\begin{eqnarray}\label{compact}
&&\int_{B_r} \Bigl ( F(x,v_h,\nabla v_h)+\mathbbm{1}_{F_h}G(x,v_h,\nabla v_h)\Bigr)\,dx \\
&& \nonumber \leq \int_{B_r} \psi \Bigl( F(x,v_h,\nabla v  )+\mathbbm{1}_{F_h}G(x,v_h,\nabla v)\Bigr)\,dx\\ 
&& + \int_{B_r} (1-\psi) \Bigl( F(x,v_h,\nabla u_h )+\mathbbm{1}_{F_h}G(x,v_h,\nabla u_h)\Bigr)\,dx+ \sigma_h. \nonumber
\end{eqnarray}
Finally, we connect $\eqref{min}$ and $\eqref{compact}$ and pass to the limit with respect to 
$h$ using the lower semicontinuity on the left hand side. For the right hand side we observe that $\mathbbm{1}_{E_h}\rightarrow \mathbbm{1}_{E}$  and $\mathbbm{1}_{F_h}\rightarrow \mathbbm{1}_{F}$ in $L^1(B_r)$ and use also the equi-integrability of $\left\{\nabla u_h\right\}$ to conclude,
\begin{eqnarray*}
&&\int_{B_r} \psi\Bigl ( F(x,u,\nabla u)+\mathbbm{1}_{E}G(x,u,\nabla u)\Bigr )\,dx+ P(E,B_{\rho})\leq\\
&&\leq \int_{B_r} \psi \Bigl ( F(x,v,\nabla v)+\mathbbm{1}_{F}G(x,v,\nabla v)\Bigr )\,dx+ P(F,B_{\rho})+ \Lambda|F\Delta E|\nonumber.
\end{eqnarray*}
Letting $\psi \downarrow \mathbbm{1}_{B_{\rho}}$ we finally get
\begin{eqnarray}\label{lmini}
&&\int_{B_{\rho}} \Bigl ( F(x,u,\nabla u)+\mathbbm{1}_{E}G(x,u,\nabla u)\Bigr )\,dx+ P(E,B_{\rho})\leq\\
&&\leq \int_{B_{\rho}}  \Bigl ( F(x,v,\nabla v)+\mathbbm{1}_{F}G(x,v,\nabla v)\Bigr )\,dx+ P(F,B_{\rho})+ \Lambda|F\Delta E|\nonumber,
\end{eqnarray}
and this proves the $\Lambda$-minimality of $(u,E)$.
\\To prove the strong convergence of $\nabla u_{h}$ to $\nabla{u}$ in $L^2(B_r)$ we start observing that 
by $\eqref{min}$ and $\eqref{compact}$ applied using $(u,E_h)$ to test the minimality of $(u_h,E_h)$ we get
\begin{eqnarray*}
&&\int_{B_r} \psi\Bigl ( F(x,u_h,\nabla u_h)+\mathbbm{1}_{E_h}G(x,u_h,\nabla u_h)\Bigr )\,dx\\
&&\hspace{2cm}\leq
\int_{B_r} \psi \Bigl( F(x,u,\nabla u  )+\mathbbm{1}_{E_h}G(x,u,\nabla u)\Bigr)\,dx+ \sigma_h
\end{eqnarray*}
Then from equiintegrability of $\left\{\nabla u_h\right\}$  in $L^2(U)$ and recalling that $\mathbbm{1}_{E_h}\rightarrow \mathbbm{1}_{E}$ in $L^1(U)$, we obtain
\begin{eqnarray*}
&&\limsup_{h\rightarrow +\infty}\int_{B_r} \psi \Bigl( F(x,u_h\nabla u_h  )+\mathbbm{1}_{E_h}G(x,u_h,\nabla u_h)\Bigr)\,dx \\
&&\hspace{2cm}\leq \int_{B_r} \psi \Bigl( F(x,u,\nabla u  )+\mathbbm{1}_{E}G(x,u,\nabla u)\Bigr)\,dx.
\end{eqnarray*}
The opposite inequality can be obtained by semicontiuity
then we can deduce,
\begin{eqnarray}\label{norm}
&&\lim_{h\rightarrow +\infty}\int_{B_r} \psi \Bigl( F(x,u_h,\nabla u_h  )+\mathbbm{1}_{E_h}G(x,u_h,\nabla u_h)\Bigr)\,dx \\
\nonumber &&\hspace{3cm}= \int_{B_r} \psi \Bigl( F(x,u,\nabla u  )+\mathbbm{1}_{E}G(x,u,\nabla u)\Bigr)\,dx.
\end{eqnarray}
Now from ellepticity condition $\eqref{ellipticity1}$ we infer, for some $\sigma_h\rightarrow 0$,
\begin{eqnarray}
&&\nu\int_{B_r}\psi|\nabla u_h-\nabla u|^2\,dx \leq 
\int_{B_r}\psi\Bigl(F(x,u_h,\nabla u_h)-F(x,u,\nabla u)\Bigr)\,dx \nonumber \\
&&+\int_{B_r}\psi\mathbbm{1}_{E}\Bigl(G(x,u_h,\nabla u_h)-G(x,u,\nabla u)\Bigr)\,dx  + \sigma_h
\end{eqnarray}
Passing to the limit we obtain
$$
\lim_{h\rightarrow +\infty}\int_{B_r}\psi|\nabla u_h -\nabla u |^2\,dx=0.
$$
Finally testing the minimality of $(u_h,E_h)$ with the pair $(u,E)$ we also get
$$
\lim_{h\rightarrow +\infty} P(E_h,B_{\rho})=P(E,B_{\rho}).
$$
With an usual argument we can deduce $u_h\rightarrow u$ in $W^{1,2}(U)$ and $P(E_h,U)\rightarrow P(E,U)$ for every open set $U\subset \subset \Omega$.
The topological information stated in $\eqref{boundary1}$ and $\eqref{boundary2}$ follows as in \cite[Theorem 21.14]{Ma} because doesn't depend on the presence of the integral bulk part.
\end{proof}

\section{Decay of the excess}
\subsection{Excess and height bound}
Now we introduce the usual quantities involved in regularity theory. Given $x\in \partial E$, a scale $r>0$ and a direction $\nu \in \mathbbm S^{n-1}$ we define the {\it spherical excess}
$$
{\mathbf e}(x,r,\nu):= \frac{1}{r^{n-1}}\int_{\partial E\cap B_r(x)}\frac{|\nu_E(y)-\nu|^2}{2}d\mathcal H^{n-1}(y),
$$
and
$$
{\mathbf e}(x,r):=\min_{\nu \in \mathbbm S^{n-1}}{\mathbf e}(x,r,\nu).
$$
In addition we define the {\it rescaled Dirichlet integral} of $u$
$$
\mathcal{D}(x,r):= \frac{1}{r^{n-1}}\int_{B_r(x)}|\nabla u|^2 dy.
$$
The following height bound lemma is a standard step in the proof of regularity.
\begin{lemma}[Height bound]
Let $(u,E)$ be a $\Lambda$-minimizer of $\mathcal{F}$ in $B_r(x_0)$. There exist two positive constants $C$ and $\varepsilon$, depending on $\norm{\D u}_{L^2(B_r(x_0))}$, such that if $x_0\in \partial E$ and
$$
{\mathbf e}(x,r,\nu)<\varepsilon
$$
for some $\nu\in \mathbbm S^{n-1}$ then
$$
\sup_{y\in \partial E\cap B_{r/2}(x_0)}\frac{|\langle \nu,y-x_0\rangle|}{r}\leq C {\mathbf e}(x,r,\nu)^{\frac{1}{2(n-1)}}
$$
\end{lemma}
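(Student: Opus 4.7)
By a translation we may take $x_0=0$, and by the rescaling $E_r=E/r$, $u_r(y)=r^{-1/2}u(ry)$ (which preserves $\Lambda$-minimality up to replacing $\Lambda$ by $\Lambda r$ and keeps the rescaled excess equal to $\mathbf{e}(0,r,\nu)$), we may also take $r=1$. A further rotation allows $\nu=e_n$, so that the statement reduces to
\[
\sup_{y\in\partial E\cap B_{1/2}}|y_n|\le C\,\mathbf{e}^{\frac{1}{2(n-1)}},\qquad \mathbf{e}:=\mathbf{e}(0,1,e_n),
\]
for $\mathbf{e}$ sufficiently small. My plan is to deduce this from the classical De Giorgi--Maggi height bound for almost-minimizers of the perimeter (see \cite[Theorem 22.8]{Ma} for the model case), by showing that the bulk part of $\mathcal{F}$ only contributes a lower-order perturbation.

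\textbf{Step 1 (almost perimeter minimality).} For any compact variation $F$ of $E$ inside $B_\rho\subset B_1$, testing the $\Lambda$-minimality of $(E,u)$ against the admissible pair $(u,F)$ yields
\[
P(E,B_\rho)\le P(F,B_\rho)+\int_{E\Delta F}|G(x,u,\nabla u)|\,dx+\Lambda|E\Delta F|.
\]
By \eqref{ellipticity1}--\eqref{ellipticity2} one has $|G|\le N|\nabla u|^2+L$; combining the Gehring-type higher integrability of Lemma \ref{Lemma maggiore sommabilità} with the energy upper bound of Theorem \ref{Energy upper bound}, exactly as in \eqref{improvement}, gives
\[
\int_{E\Delta F}|\nabla u|^2\,dx\le C\,\rho^{n-1}\Bigl(\tfrac{|E\Delta F|}{|B_\rho|}\Bigr)^{1-\frac{1}{s}},
\]
so that $P(E,B_\rho)\le P(F,B_\rho)+\omega(\rho)\rho^{n-1}$ with $\omega(\rho)\to 0$ whenever $|E\Delta F|/|B_\rho|\to 0$. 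This is precisely the modulus in which $E$ qualifies as a quasi-minimizer of the perimeter for the purposes of the height-bound argument.

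\textbf{Step 2 (classical height bound).} Once $E$ is known to be an almost-minimizer of the perimeter, the bound with the sharp exponent $1/(2(n-1))$ is a standard geometric fact. One argues by contradiction: let $y\in\partial E\cap\overline{B}_{1/2}$ realize the supremum and set $M:=|y_n|>0$. Applying the density lower bound of Theorem \ref{Density lower bound} at $y$ gives $P(E,B_{M/4}(y))\ge cM^{n-1}$. On the other hand, the smallness of $\mathbf{e}$ forces $\nu_E$ to be $L^2$-close to $e_n$ on $\partial E\cap B_1$, so that slicing in direction $e_n$ together with the relative isoperimetric inequality in each slice shows that the ``vertical jump'' of height $M$ between $0$ and $y$ must be matched by perimeter contribution in the directions orthogonal to $e_n$. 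Balancing these two constraints yields $M^{2(n-1)}\le C\,\mathbf{e}$, which is the claim.

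\textbf{Main obstacle.} The delicate point is Step 1: the naive estimate $\int_{E\Delta F}|\nabla u|^2\le C\rho^{n-1}$ coming only from Theorem \ref{Energy upper bound} is already of the order of the perimeter of $E$ in $B_\rho$ and would overwhelm the scaling needed for the sharp exponent. The Gehring improvement contributes the crucial extra factor $(|E\Delta F|/|B_\rho|)^{1-1/s}$ that makes the bulk term genuinely subleading, so that the classical perimeter argument of Step 2 transfers verbatim to the $\Lambda$-minimizers of $\mathcal{F}$.
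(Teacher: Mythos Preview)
Your Step~2 is essentially the paper's proof. The paper simply observes that the argument of \cite[Theorem~22.8]{Ma} uses from minimality only the perimeter density upper and lower bounds, together with the relative isoperimetric inequality; since both density bounds have already been established here (Theorem~\ref{Energy upper bound} and Theorem~\ref{Density lower bound}), the geometric argument transfers verbatim. The paper also lists the compactness lemma among the ingredients, but the quantitative estimate with the sharp exponent $\tfrac{1}{2(n-1)}$ comes from the direct Maggi argument you sketch, not from a blow-up.

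Your Step~1, by contrast, is an unnecessary detour. The almost-minimality inequality you derive is correct, but it is not in the $(\Lambda,r_0)$-form that would allow you to invoke \cite[Theorem~22.8]{Ma} as a black box: the error term $C\rho^{n-1}\bigl(|E\Delta F|/|B_\rho|\bigr)^{1-1/s}$ is not of the shape $\Lambda|E\Delta F|$, so you would in any case have to open up Maggi's proof and verify it goes through---which is precisely what you do in Step~2, using the density bounds directly. Consequently your ``Main obstacle'' paragraph misidentifies the difficulty. The Gehring improvement was indeed crucial \emph{earlier} in the paper (it enters the proof of Lemma~\ref{Lemma decadimento 1}, which ultimately feeds into Theorem~\ref{Density lower bound}), but by the time one reaches the height bound its effect has already been absorbed into the density estimates and there is nothing delicate left to resolve. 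You may drop Step~1 and the obstacle discussion entirely; Step~2 alone, together with the references to Theorems~\ref{Energy upper bound} and~\ref{Density lower bound}, is the paper's argument.
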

\begin{proof}
The proof of this lemma is almost equal to the one of \cite[Theorem 22.8]{Ma}. As a matter of fact it follows from the density lower bound (see Theorem \ref{Density lower bound}), the relative isoperimetric inequality and the compactness result proved above.
\end{proof}
Proceeding as in \cite{Ma} we give the following Lipschitz approximation lemma which is a consequence of the height bound lemma. Its proof follows exactly as in \cite[Theorem 23.7]{Ma}.
\begin{theorem}[Lipschitz approximation]
Let $(u,E)$ be a $\Lambda$-minimizer of $\mathcal{F}$ in $B_r(x_0)$.
There exist two positive constants $C_3$ and $\varepsilon_3$, depending on $\norm{\D u}_{L^2(B_r(x_0))}$, such that
$$
\mbox{ if }x_0\in \partial E\;\; \mbox{ and } \;\;{\mathbf e}(x_0,r,e_n)<\varepsilon _3
$$
then there exists a Lipschitz function $f:\mathbbm R^{n-1}\rightarrow \mathbbm R$ such that
$$
\sup_{x'\in \mathbbm R^{n-1}}\frac{|f(x')|}{r}\leq C_3{\mathbf e}(x_0,r,e_n)^{\frac{1}{2(n-1)}},\hspace{1cm} \norm{\nabla'f}_{L^{\infty}}\leq 1
$$
and
$$
\frac{1}{r^{n-1}}\mathcal{H}^{n-1}((\partial E \Delta \Gamma_f)\cap B_{r/2}(x_0))\leq C_3 {\mathbf e}(x_0,r,e_n),
$$
where $\Gamma_f$ is the graph of $f$. Moreover $f$ is ``almost harmonic'' in the sense that
$$
\frac{1}{r^{n-1}}\int_{B_{r/2}^{n-1}(x'_0)}|\nabla'f|^2\,dx'\leq C_3 {\mathbf e}(x_0,r,e_n).
$$
\end{theorem}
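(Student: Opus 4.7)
The plan is to adapt the classical proof scheme from \cite[Theorem 23.7]{Ma}, since the statement has precisely the form of the analogous result for $\Lambda$-minimizers of the perimeter, with an extra bulk term to absorb at the end. The whole argument rests on the density lower bound (Theorem \ref{Density lower bound}), the compactness lemma (Lemma \ref{Lemma compattezza}) and, crucially, the height bound just proved.

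First, using the height bound I would fix the direction $e_n$ and work inside the cylinder $C_{r/2}(x_0)=\{(x',x_n):|x'-x_0'|<r/2,\ |x_n-x_0\cdot e_n|<C_3{\mathbf e}^{1/(2(n-1))}r\}$, so that $\partial E\cap B_{r/2}(x_0)$ is confined to a thin slab around the hyperplane $\{x_n=x_0\cdot e_n\}$. Next I would introduce the classical cylindrical excess on smaller balls and set up the \emph{maximal function selection}: declare $y\in\partial E\cap B_{r/4}(x_0)$ to be a \emph{good point} if the cylindrical excess in direction $e_n$ stays below a fixed threshold $\lambda$ on every scale $s\in(0,r/4)$. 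A covering/maximal-function argument bounds the $\mathcal{H}^{n-1}$-measure of the bad set by $C{\mathbf e}(x_0,r,e_n)/\lambda$, which, together with the density lower bound applied to the projection of this bad set onto $\mathbb{R}^{n-1}$, gives the stated $\mathcal{H}^{n-1}$ estimate on $\partial E\Delta\Gamma_f$. On the good set one shows (again as in \cite{Ma}) that $\partial E$ locally coincides with the graph of a $1$-Lipschitz function over a portion of $\{x_n=x_0\cdot e_n\}$; extending via McShane--Kirszbraun produces $f:\mathbb{R}^{n-1}\to\mathbb{R}$ with the required pointwise height and Lipschitz bounds.

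For the ``almost harmonic'' assertion I would test the $\Lambda$-minimality of $(E,u)$ against inner variations $E_t=\Psi_t(E)$, $u_t=u\circ\Psi_t^{-1}$, where $\Psi_t(x)=x+t\eta(x)e_n$ and $\eta\in C^1_c(B_{r/2}(x_0))$ has support in the region where $\partial E=\Gamma_f$. The standard computation of the first variation of the perimeter gives
\begin{equation*}
\int_{\Gamma_f}\mathrm{div}^{\partial E}(\eta e_n)\,d\mathcal{H}^{n-1}
=\int_{B_{r/2}^{n-1}(x_0')}\frac{\nabla'f\cdot\nabla'\eta}{\sqrt{1+|\nabla'f|^2}}\,dx',
\end{equation*}
and this quantity must be balanced against the first variations of the bulk term $\int[F+\mathbbm{1}_EG]\,dx$ and of $\Lambda||E|-d|$. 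The bulk first variation produces a term whose modulus is bounded by a constant times $\int_{B_{r/2}(x_0)}(1+|\nabla u|^2)\,dx$, and the volume perturbation by $\Lambda\|\eta\|_\infty r^n$. Using $|\nabla'f|\leq 1$ to replace the surface measure factor by $1$ up to an error controlled by $\int|\nabla'f|^2$ (i.e.\ by the excess, via the coarea/area formula on the good set), dividing by $r^{n-1}$ and splitting off the part of $\partial E$ not covered by $\Gamma_f$ (measured by the previous step) yields the desired inequality $r^{1-n}\int_{B_{r/2}^{n-1}(x_0')}|\nabla'f|^2\,dx'\leq C({\mathbf e}(x_0,r,e_n)+\mathrm{bulk}+\Lambda r)$.

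The main obstacle is controlling the bulk contribution to the first variation. For a pure $\Lambda$-perimeter minimizer the $\Lambda r$ term is a harmless higher-order perturbation, but here $\int_{B_{r/2}(x_0)}|\nabla u|^2\,dx$ has a priori only the natural bound $\sim r^{n-1}$ coming from Theorem \ref{Energy upper bound}, which is of the same order as the perimeter and would destroy the estimate. The point is precisely that the height bound forces $E$ to be $\varepsilon$-close to a halfspace in $B_{r/2}(x_0)$, so hypothesis (iii) of Lemma \ref{Lemma decadimento 1} is satisfied; consequently $\int_{B_{\rho}(x_0)}|\nabla u|^2\,dx\leq C\rho^{n-\delta}$ with $\delta$ arbitrarily small, which makes the bulk contribution genuinely of lower order than $r^{n-1}$ and closes the argument. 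Rescaling $r=1$ and absorbing constants yields the final form of the theorem.
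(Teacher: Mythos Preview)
Your construction of $f$ via the height bound, the good/bad point decomposition with the maximal function, and the McShane--Kirszbraun extension is the standard route and matches what the paper intends (the paper simply defers to \cite[Theorem 23.7]{Ma}).

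The gap is in your treatment of the final estimate $r^{1-n}\int_{B_{r/2}^{n-1}}|\nabla' f|^2\,dx'\leq C_3\,{\mathbf e}(x_0,r,e_n)$. Despite the suggestive label ``almost harmonic'', this is purely an $L^2$ energy bound on $\nabla' f$, not a weak PDE statement, and no variational testing is needed to obtain it. On the good set $M\subset B^{n-1}_{r/2}$ over which $\Gamma_f\cap B_{r/2}$ coincides with $\partial E$, the area formula gives $\nu_E=(-\nabla' f,1)/\sqrt{1+|\nabla' f|^2}$, hence $|\nu_E-e_n|^2=2\bigl(1-(1+|\nabla' f|^2)^{-1/2}\bigr)\geq c\,|\nabla' f|^2$ since $|\nabla' f|\leq 1$. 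Integrating over the graph yields
\[
\int_M|\nabla' f|^2\,dx'\leq C\int_{\partial E\cap B_{r/2}}|\nu_E-e_n|^2\,d\mathcal{H}^{n-1}\leq C\,r^{n-1}{\mathbf e}(x_0,r,e_n).
\]
On the complement $B^{n-1}_{r/2}\setminus M$ one simply uses $|\nabla' f|\leq 1$ together with $|B^{n-1}_{r/2}\setminus M|\leq C\,r^{n-1}{\mathbf e}$, which is a by-product of the symmetric-difference estimate you already established. Summing the two contributions gives the claim.

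Your inner-variation argument tests $\nabla' f$ against $\nabla'\eta$ for an arbitrary $\eta$, which does not by itself control $\int|\nabla' f|^2$; even if you intend $\eta\approx f$, the route is circuitous. More importantly, your worry about the bulk contribution is entirely an artifact of this detour: since the correct argument is purely geometric (area formula on the good set, measure bound on the bad set), the Dirichlet energy of $u$ never enters, and Lemma \ref{Lemma decadimento 1} is not needed here. The variational testing you describe, together with the control of the bulk first variation, is precisely what occurs \emph{later}, in the proof of excess improvement (Theorem \ref{Miglioramento eccesso}) via the weak Euler--Lagrange equation; it is not part of the Lipschitz approximation theorem itself.
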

Finally we shall need the following reverse Poincar\'{e} inequality which can be proved exactly as in the case of $\Lambda$-minimizers of the perimeter (see \cite[Theorem 24.1]{Ma} ).
\begin{theorem}[Reverse Poincar\'{e}] 
Let $(u,E)$ be a $\Lambda$-minimizer of $\mathcal{F}$ in $B_r(x_0)$.
There exist two positive constants $\varepsilon_4$ and $C_4$ such that if $x_0\in \partial E$ and
${\mathbf e}(x_0,r,\nu)<\varepsilon_4$ then
$$
{\mathbf e}(x_0,r/2,\nu)\leq C_4 \biggl(\frac{1}{r^{n+1}}\int_{\partial E\cap B_r(x_0)}|\left<\nu,x-x_0\right>-c|^2d\mathcal{H}^{n-1}+\mathcal{D}(x_0,r)+r\biggr),
$$
for every $c\in \mathbbm R$.
\end{theorem}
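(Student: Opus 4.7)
The plan is to adapt, almost verbatim, Maggi's proof of the reverse Poincaré inequality for $\Lambda$-minimizers of the perimeter (\cite[Theorem 24.1]{Ma}); the bulk integrand in $\mathcal{F}$ will only contribute two additive corrections to the right-hand side, namely $\mathcal{D}(x_0,r)$ and $r$. By translation, rotation, and replacing $x_n$ by $x_n-c$, I may assume $x_0=0$, $\nu=e_n$, and (multiplying through by $(r/2)^{n-1}$) it suffices to prove
\begin{equation*}
\int_{\partial E\cap B_{r/2}}|\nu_E-e_n|^2\,d\mathcal{H}^{n-1}\leq C\left(\frac{1}{r^2}\int_{\partial E\cap B_r}x_n^2\,d\mathcal{H}^{n-1}+\int_{B_r}|\D u|^2\,dx+r^n\right).
\end{equation*}

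The key construction is the ``slab truncation'' competitor of \cite[Thm.~24.1]{Ma}: for a.e.\ $t$ in a range $I\subset(r/8,r/4)$, remove the portion of $\partial E$ sitting inside $B_{r/2}\cap\{|x_n|<t\}$ and replace it by two flat caps lying on $\{x_n=\pm t\}$, obtaining a set $E_t$ whose symmetric difference with $E$ is contained in that slab. Maggi's explicit computation then yields
\begin{equation*}
P(E,B_{r/2})-P(E_t,B_{r/2})\geq \frac12\int_{\partial E\cap B_{r/2}\cap\{|x_n|<t\}}|\nu_E-e_n|^2\,d\mathcal{H}^{n-1}-R(t),
\end{equation*}
where the ``trace remainder'' $R(t)$, once averaged over $t\in I$ by Fubini, is dominated by $C r^{-2}\int_{\partial E\cap B_r}x_n^2\,d\mathcal{H}^{n-1}$.

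To exploit the $\Lambda$-minimality of $(E,u)$ I test with the admissible pair $(E_t,u)$ (keeping $u$ unchanged), which gives
\begin{equation*}
P(E,B_{r/2})-P(E_t,B_{r/2})\leq \int_{E\Delta E_t}|G(x,u,\D u)|\,dx+\Lambda\,|E\Delta E_t|.
\end{equation*}
Since $E\Delta E_t\subset B_{r/2}\cap\{|x_n|<t\}$ and $|G(x,s,z)|\leq C(1+|z|^2)$ by \eqref{ellipticity1}--\eqref{ellipticity2}, the right-hand side is bounded by $C\int_{B_r\cap\{|x_n|<t\}}(1+|\D u|^2)\,dx+C\Lambda r^{n-1}t$. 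Combining the two displays, averaging over $t\in I$, and using monotonicity of the $t$-slab integral to control it by $\int_{B_r}(1+|\D u|^2)\,dx$, the bulk contribution splits as $\int_{B_r}|\D u|^2\,dx+Cr^n$, and the $\Lambda$-term contributes $C\Lambda r^n$; both get absorbed into the ``$\mathcal{D}(x_0,r)+r$'' block after dividing by $r^{n-1}$.

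The only genuinely non-routine ingredient is the computation showing that the averaged trace remainder $\fint_I R(t)\,dt$ is bounded by $C r^{-2}\int_{\partial E\cap B_r}x_n^2\,d\mathcal{H}^{n-1}$; this is a purely perimeter-side slicing argument that one imports unchanged from Maggi. Crucially, the bulk modifications do not interfere with that averaging: the extra $G$-term depends only on the Lebesgue volume swept by $E\Delta E_t$, which is handled by a trivial Fubini, and the $\Lambda$-term is linear in $t$. Thus the heart of the argument remains the perimeter calculation, while our contribution is the straightforward verification that the lower-order integrals produced by $G$ and $\Lambda$ are absorbed by the $\mathcal{D}(x_0,r)+r$ correction.
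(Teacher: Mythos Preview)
Your proposal is correct and matches the paper's approach: the paper gives no proof and simply states that the argument ``can be proved exactly as in the case of $\Lambda$-minimizers of the perimeter (see \cite[Theorem 24.1]{Ma}).'' The only adaptation required---testing minimality with the pair $(E_t,u)$ so that the bulk integrand $G$ contributes an error bounded by $C\bigl(\mathcal{D}(x_0,r)+r\bigr)$---is precisely what you describe.
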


\subsection{Weak Euler-Lagrange equation}

The last ingredient to prove the excess improvement is the following Euler-Lagrange equation that we state for $\Lambda r$-minimizers of the rescaled functional $\mathcal{F}_r$ defined below.
For the sake of simplicity we will denote $A_1(x,s)$ the matrix whose entries are $a_{hk}(x,s)$, $A_2(x,s)$ the vector of components $a_{h}(x,s)$, $A_3(x,s)=a(x,s)$ and similarly for $B_i$, $i=1,2,3$.
Accordingly we define
\begin{equation*}
\begin{split}
&\mathcal{F}_r(w,D)
:=\int_{B_1}\big[ F_r(x,u,\D u)+\mathbbm{1}_DG_r(x,u,\D u) \big]\,dx\\
& = \int_{B_1}\big[ \langle (A_{1r}+\mathbbm{1}_D B_{1r})\D w,\D w \rangle + \sqrt{r}\langle A_{2r}+\mathbbm{1}_D B_{2r},\D w \rangle + r(A_{3r}+\mathbbm{1}_DB_{3r}) \big]\,dx,
\end{split}
\end{equation*}
where $r>0$, $x_0\in\Omega$,  $A_{ir}:=A_i (x_0+ry,\sqrt{r} w)$, $B_{ir}:=B_i (x_0+ry,\sqrt{r} w)$, for $i=1,2,3$.
The argument used to prove the next result is similar to the one in \cite[Theorem 7.35]{AFP}.
\begin{theorem}[Weak Euler-Lagrange equation]
Let $(u,E)$ be a $\Lambda r$-minimizer of $\mathcal{F}_r$ in $B_1$. For every vector field $X\in C_0^1(B_1,\R^n)$ and for some constant $\overline{C}=\overline{C}(N,L_D,\sup|X|,\sup|\D X|)>0$ it holds
\begin{equation}
\begin{split}
\label{E-L}
\int_{\dd E} \textnormal{div}_\tau X\,d\mathcal{H}^{n-1}\leq \overline{C}\int_{B_1}\big( |\D u|^2+r \big)\,dx+\Lambda r\int_{\dd E}|X|\,d\mathcal{H}^{n-1},
\end{split}
\end{equation}
where $L_D$ is the greatest Lipschitz constant of the data 
$a_{ij},b_{ij}, a_i,b_i,a,b$, and $\textnormal{div}_\tau$ denotes the tangential divergence on $\dd E$.
\end{theorem}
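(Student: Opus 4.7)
The plan is to test the $\Lambda r$-minimality of $(u,E)$ against the pair obtained by flowing along the vector field $X$. Concretely, for $X\in C^1_0(B_1,\R^n)$ and $t\in\R$ with $|t|$ small, set
\begin{equation*}
\Phi_t(x):=x+tX(x),\qquad E_t:=\Phi_t(E),\qquad u_t:=u\circ\Phi_t^{-1},
\end{equation*}
so that $\Phi_t$ is a diffeomorphism of $B_1$ onto itself and $(E_t,u_t)$ is admissible. Then the $\Lambda r$-minimality of $(u,E)$ gives
\begin{equation*}
\mathcal{F}_r(u,E)\leq \mathcal{F}_r(u_t,E_t)+\Lambda r\,|E\Delta E_t|,
\end{equation*}
and the strategy is to extract the first variation at $t=0$, then to obtain the stated one-sided inequality by replacing $X$ with $-X$.

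First I would compute the perimeter contribution. By the classical first variation of area (see e.g. Maggi), one has
\begin{equation*}
\lim_{t\to 0^+}\frac{P(E_t,B_1)-P(E,B_1)}{t}=\int_{\partial^{\star}E}\textnormal{div}_\tau X\,d\mathcal{H}^{n-1},
\end{equation*}
while the volume asymmetry satisfies
\begin{equation*}
\limsup_{t\to 0^+}\frac{|E\Delta E_t|}{t}\leq \int_{\partial^{\star}E}|X\cdot\nu_E|\,d\mathcal{H}^{n-1}\leq \int_{\partial E}|X|\,d\mathcal{H}^{n-1}.
\end{equation*}
Next I would treat the bulk part. Performing the change of variables $y=\Phi_t(x)$ with Jacobian $J\Phi_t(x)=1+t\,\textnormal{div}X(x)+o(t)$ and $(\nabla \Phi_t)^{-1}(x)=I-t\nabla X(x)+o(t)$, and recalling $\mathbbm{1}_{E_t}(\Phi_t(x))=\mathbbm{1}_E(x)$, the bulk energy of $(u_t,E_t)$ becomes an integral over $B_1$ of the integrand
\begin{equation*}
\big[F_r+\mathbbm{1}_E G_r\big]\bigl(\Phi_t(x),u(x),\nabla u(x)(\nabla\Phi_t(x))^{-1}\bigr)\,J\Phi_t(x).
\end{equation*}
Differentiating in $t$ at $t=0$, using the explicit quadratic–linear–constant structure \eqref{structure1}--\eqref{structure2}, the ellipticity/boundedness \eqref{ellipticity1}--\eqref{ellipticity2} and the Lipschitz bound \eqref{Hoelderianity}, every term is controlled by products of $\sup|X|,\sup|\nabla X|,N,L_D$ with one of $|\nabla u|^2$, $\sqrt{r}\,|\nabla u|$, or $r$. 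A standard application of Young's inequality $\sqrt{r}\,|\nabla u|\leq |\nabla u|^2+\tfrac r4$ then collapses every contribution into the form $\overline{C}\bigl(|\nabla u|^2+r\bigr)$.

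Putting the three pieces together, dividing the minimality inequality by $t>0$ and letting $t\to 0^+$ yields
\begin{equation*}
-\int_{\partial^\star E}\textnormal{div}_\tau X\,d\mathcal{H}^{n-1}\leq \overline{C}\int_{B_1}\!\bigl(|\nabla u|^2+r\bigr)dx+\Lambda r\int_{\partial E}|X|\,d\mathcal{H}^{n-1}.
\end{equation*}
Since $-X$ satisfies the same hypotheses with the same constants, repeating the argument with $X$ replaced by $-X$ yields the reverse inequality and thus \eqref{E-L}, using $\partial^\star E=\partial E$ modulo $\mathcal{H}^{n-1}$-null sets thanks to Theorem \ref{Density lower bound}. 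The main technical point I expect is the careful bookkeeping of the $r$-scaling in the coefficients $A_{ir},B_{ir}$: the $x$-derivatives produce a factor of $r L_D$ and the $s$-derivatives a factor of $\sqrt{r}L_D$, which must be absorbed together with the linear and constant terms to give a clean bound of the form $\overline{C}(|\nabla u|^2+r)$; everything else is routine first-variation computation.
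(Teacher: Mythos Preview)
Your proposal is correct and follows essentially the same route as the paper's proof: define the flow $\Phi_t=x+tX$, set $E_t=\Phi_t(E)$, $u_t=u\circ\Phi_t^{-1}$, expand the perimeter, volume-defect, and bulk contributions to first order in $t$ via the change of variables $y=\Phi_t(x)$, and control the bulk first variation using the quadratic structure, the Lipschitz bound $L_D$, and Young's inequality. Your explicit remark that one must swap $X\mapsto -X$ to obtain the sign stated in \eqref{E-L}, together with the observation that $\partial^\star E=\partial E$ up to $\mathcal H^{n-1}$-null sets by Theorem~\ref{Density lower bound}, matches what is implicitly done in the paper.
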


\begin{proof}
Let us fix $X\in C_0^1(B_1,\R^n)$. We set $\Phi_t(x)=x+tX(x)$, for any $t>0$, $E_t=\Phi_t(E)$ and $u_t=u\circ\Phi_t^{-1}$. From the $\Lambda r$-minimality it follows
\begin{equation}
\label{eqq5}
\begin{split}
& [P(E_t,B_1)-P(E,B_1)]+\Lambda r|E_t\Delta E|\\
& +\int_{B_1}[F_r(y,u_t,\D u_t)+\mathbbm{1}_{E_t}(y)G_r(y,u_t,\D u_t)]\,dy\\
& -\int_{B_1}[F_r(x,u,\D u)+\mathbbm{1}_{E}(x)G_r(x,u,\D u)]\,dx\geq 0.
\end{split}
\end{equation}
In order to obtain \eqref{E-L} we will divide by $t$ and pass to the upper limit as $t\rightarrow 0^+$. Let us study these terms separately. The first variation of the area gives
\begin{equation}
\label{eqq2}
\lim_{t\rightarrow 0^+} \frac{1}{t}[P(E_t,B_1)-P(E,B_1)]=\int_{\dd E} \textnormal{div}_\tau X\,d\mathcal{H}^{n-1}.
\end{equation}
We can deal with the second term observing that
\begin{equation}
\label{eqq3}
\lim_{t\rightarrow 0^+}\frac{\abs{E_t\Delta E}}{t}\leq \int_{\dd E}\abs{X\cdot\nu_E}\,d\mathcal{H}^{n-1},
\end{equation}
(see for instance \cite[Theorem 3.2]{JP}). In the first bulk term we make the change of variables $y=\Phi_t(x)$ with $x\in B_1$ and $t>0$, taking into account that
\begin{equation*}
\D \Phi_t^{-1}(\Phi_t(x))=I-t\D X(x)+o(t), \quad \textnormal{J}\Phi_t(x)=1+t\textnormal{div}X(x)+o(t).
\end{equation*}
Thus we gain
\begin{equation*}
\begin{split}
& \int_{B_1}\big[F_r(y,u_t,\D u_t)+\mathbbm{1}_{E_t}(y)G_r(y,u_t,\D u_t)\big]\,dy\\
& =\int_{B_1} \big[ F_r(\Phi_t(x),u,\D u)+\mathbbm{1}_E(x) G_r(\Phi_t(x),u,\D u) \big](1+t\textnormal{div}X)\,dx\\
& - t\int_{B_1}\big[ 2\big\langle C_1\D u\D X,\D u \big\rangle +\sqrt{r}\big\langle C_2,\D u\D X \big\rangle\big]\,dx+o(t),
\end{split}
\end{equation*}
where we set
\begin{equation*}
C_i:=\tilde{A}_{ir}+\mathbbm{1}_{E}\tilde{B}_{ir}=A_{ir}(\Phi_t(x),u)+\mathbbm{1}_{E}(x)B_{ir}(\Phi_t(x),u),
\end{equation*}
for $i=1,2,3$. By simple calculations we obtain
\begin{equation}
\label{eqq1}
\begin{split}
& \int_{B_1}\big[F_r(y,u_t,\D u_t)+\mathbbm{1}_{E_t}(y)G_r(y,u_t,\D u_t)\big]\,dy\\
& -\int_{B_1}\big[F_r(x,u,\D u)+\mathbbm{1}_{E}(x)G_r(x,u,\D u)\big]\,dx\\
& = \int_{B_1} \big\{ F_r(\Phi_t(x),u,\D u)+\mathbbm{1}_E(x) G_r(\Phi_t(x),u,\D u)-[F_r(x,u,\D u)+\mathbbm{1}_{E}(x)G_r(x,u,\D u)] \big\}\,dx\\
& +t\bigg[ \int_{B_1} \big[F_r(\Phi_t(x),u,\D u)+\mathbbm{1}_E(x) G_r(\Phi_t(x),u,\D u)\big]\textnormal{div}X\,dx\\
& - \int_{B_1}\big[ 2\big\langle C_1\D u\D X,\D u \big\rangle + \sqrt{r}\big\langle C_2,\D u\D X \big\rangle\big]\,dx\bigg]+o(t).
\end{split}
\end{equation}
Let us estimate the first of the three terms. By Lipschitz continuity and Young's inequality we get
\begin{equation*}
\begin{split}
& \int_{B_1} \Big\{ \big\langle F_r(\Phi_t(x),u,\D u)+\mathbbm{1}_E(x) G_r(\Phi_t(x),u,\D u)-[F_r(x,u,\D u)+\mathbbm{1}_{E}(x)G_r(x,u,\D u)] \Big\}\,dx\\
& \leq c(L_D)t\int_{B_1}|X|[|\D u|^2+\sqrt{r}|\D u|+r]\,dx \leq c(L_D)t\int_{B_1}|X|[|\D u|^2+r]\,dx.
\end{split}
\end{equation*}
Finally, dividing by $t$ and passing to the upper limit as $t\rightarrow 0^+$ we infer
\begin{equation}
\label{eqq4}
\begin{split}
& \limsup_{t\rightarrow 0^+}\frac{1}{t}\bigg[ \int_{B_1}[F_r(y,u_t,\D u_t)+\mathbbm{1}_{E_t}(y)G_r(y,u_t,\D u_t)]\,dy\\
& -\int_{B_1}[F_r(x,u,\D u)+\mathbbm{1}_{E}G_r(x,u,\D u)]\,dx \bigg]\\
&\leq c(L_D)\int_{B_1}|X|[|\D u|^2+r]\,dx+\int_{B_1}[F_r(x,u,\D u)+\mathbbm{1}_{E}G_r(x,u,\D u)]\textnormal{div}X\,dx\\
& -\int_{B_1}\big[ 2\big\langle (A_{1r}+\mathbbm{1}_{E}B_{1r})\D u\D X,\D u \big\rangle + \sqrt{r}\big\langle (A_{2r}+\mathbbm{1}_{E}B_{2r}),\D u\D X \big\rangle \big]\,dx.
\end{split}
\end{equation}
Passing to the upper limit as $t\rightarrow 0^+$ and putting \eqref{eqq2}, \eqref{eqq3}, \eqref{eqq4} together we get
\begin{equation*}
\begin{split}
& \int_{\dd E} \textnormal{div}_\tau X\,d\mathcal{H}^{n-1}\\
& \leq c(L_D)\int_{B_1}|X|[|\D u|^2+r]\,dx+\bigg|\int_{B_1}[F_r(x,u,\D u)+\mathbbm{1}_{E}G_r(x,u,\D u)]\textnormal{div}X\,dx\bigg|\\
& +\int_{B_1}\big| 2\big\langle (A_{1r}+\mathbbm{1}_{E}B_{1r})\D u\D X,\D u \big\rangle + \sqrt{r}\big\langle (A_{2r}+\mathbbm{1}_{E}B_{2r}),\D u\D X \big\rangle \big|\,dx+\Lambda r\int_{\dd E}\abs{X}\,d\mathcal{H}^{n-1}\\
& \leq \overline{C}\int_{B_1}\big( |\D u|^2+r \big)\,dx+\Lambda r\int_{\dd E}|X|\,d\mathcal{H}^{n-1},
\end{split}
\end{equation*}
where $\overline{C}=\overline{C}(N,L_D,\sup|X|,\sup|\D X|)$.

\end{proof}

\section{Excess improvement}

\begin{theorem}[Excess improvement]
\label{Miglioramento eccesso}
For every $\tau\in \big(0,\frac{1}{2}\big)$ and $M>0$ there exists a constant $\varepsilon_5=\varepsilon_5(\tau,M)\in(0,1)$ such that if $(u,E)$ is a $\Lambda$-minimizer of $\mathcal{F}$ in $B_r(x_0)$ with $x_0\in\dd E$ and
\begin{equation}
\label{3}
{\mathbf e}(x_0,r)\leq\varepsilon_5 \quad\text{and}\quad \mathcal{D}(x_0,r)+r\leq M{\mathbf e}(x_0,r)
\end{equation}
then there exists a positive constant $C_5$, depending on $\norm{\D u}_{L^2(B_r(x_0))}$, such that
\begin{equation*}
{\mathbf e}(x_0,\tau r)\leq C_5(\tau^2{\mathbf e}(x_0,r)+\mathcal{D}(x_0,2\tau r)+\tau r).
\end{equation*}
\end{theorem}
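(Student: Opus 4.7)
\textbf{The plan} is a contradiction/compactness argument along the lines of \cite[Theorem 25.3]{Ma} and the analogous result in \cite{FJ}. Suppose the conclusion fails: there exist $\tau\in(0,1/2)$, $M>0$ and a sequence of $\Lambda_h$-minimizers $(u_h,E_h)$ in $B_{r_h}(x_h)$ with $x_h\in\partial E_h$, $\varepsilon_h:=\mathbf{e}(x_h,r_h)\to 0$, $\mathcal{D}(x_h,r_h)+r_h\leq M\varepsilon_h$, and yet
$$\mathbf{e}(x_h,\tau r_h)>C_5\bigl(\tau^2\varepsilon_h+\mathcal{D}(x_h,2\tau r_h)+\tau r_h\bigr),$$
for a constant $C_5$ to be fixed at the end. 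After translation to the origin, rotation so that the direction achieving $\mathbf{e}(0,r_h)$ is $e_n$, and rescaling to $r_h=1$, one obtains $\Lambda_h r_h$-minimizers of $\mathcal{F}_{r_h}$ with $\Lambda_h r_h\to 0$. For large $h$ the Lipschitz approximation theorem provides functions $f_h\colon\mathbb{R}^{n-1}\to\mathbb{R}$ with $\|\nabla'f_h\|_\infty\leq 1$, $\mathcal{H}^{n-1}((\partial E_h\Delta\Gamma_{f_h})\cap B_{1/2})\leq C\varepsilon_h$ and $\int_{B'_{1/2}}|\nabla'f_h|^2\,dx'\leq C\varepsilon_h$, where $B'_{1/2}$ denotes the ball in $\mathbb{R}^{n-1}$.

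\textbf{Blow-up and harmonicity of the limit.} Set $g_h:=f_h/\sqrt{\varepsilon_h}$; then $(g_h)$ is bounded in $H^1(B'_{1/2})$, so along a subsequence $g_h\rightharpoonup g$ weakly in $H^1$ and strongly in $L^2$. The crucial point is to show $g$ is harmonic. Apply the weak Euler--Lagrange inequality \eqref{E-L} to $\pm X$ with $X=\varphi e_n$, $\varphi\in C_c^1(B'_{1/2})$. The right-hand side of \eqref{E-L} is bounded by $C\int_{B_1}(|\nabla u_h|^2+r_h)\,dx+\Lambda_h r_h\,P(E_h,B_1)\sup|\varphi|\leq C(M+1)\varepsilon_h$, thanks to hypothesis \eqref{3} and the uniform perimeter bound coming from Theorem \ref{Energy upper bound}. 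On the left, replacing the integral over $\partial E_h$ by the one over $\Gamma_{f_h}$ costs at most $C_\varphi\varepsilon_h$, and a direct computation gives
$$\int_{\Gamma_{f_h}}\operatorname{div}_\tau X\,d\mathcal{H}^{n-1}=\int_{B'_{1/2}}\frac{\nabla'f_h\cdot\nabla'\varphi}{\sqrt{1+|\nabla'f_h|^2}}\,dx'=\int_{B'_{1/2}}\nabla'f_h\cdot\nabla'\varphi\,dx'+O(\varepsilon_h).$$
Dividing by $\sqrt{\varepsilon_h}$ and passing to the limit yields $\int_{B'_{1/2}}\nabla'g\cdot\nabla'\varphi\,dx'=0$ for every admissible $\varphi$, hence $g$ is harmonic.

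\textbf{Decay and conclusion.} Let $\nu_h$ attain $\mathbf{e}(0,2\tau)$ and apply the reverse Poincar\'e inequality at scale $2\tau$:
$$\mathbf{e}(0,\tau)\leq C_4\biggl(\frac{1}{(2\tau)^{n+1}}\int_{\partial E_h\cap B_{2\tau}}|\langle\nu_h,x\rangle-c_h|^2\,d\mathcal{H}^{n-1}+\mathcal{D}(0,2\tau)+2\tau r_h\biggr),$$
choosing $c_h$ optimally. Since $\nu_h\to e_n$ (by the height bound and $\varepsilon_h\to 0$), replacing $\partial E_h$ by $\Gamma_{f_h}$ transforms the first term into essentially $(2\tau)^{-(n+1)}\int_{B'_{2\tau}}|f_h-\ell_h|^2\,dx'$ for a suitable affine function $\ell_h$. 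Dividing by $\varepsilon_h$, the strong $L^2$ convergence of $g_h$ and the Campanato-type decay for harmonic functions yield $(2\tau)^{-(n+1)}\int_{B'_{2\tau}}|g-\ell|^2\,dx'\leq C\tau^2\int_{B'_{1/2}}|\nabla'g|^2\,dx'\leq C\tau^2$, where $\ell$ is the first-order Taylor expansion of $g$ at the origin. Combining the pieces,
$$\limsup_{h\to\infty}\frac{\mathbf{e}(0,\tau)}{\varepsilon_h}\leq C\tau^2+\limsup_{h\to\infty}\frac{C\,\mathcal{D}(0,2\tau)}{\varepsilon_h},$$
which contradicts the standing assumption once $C_5>2C$. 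The main technical obstacle is the harmonicity step: one must simultaneously control the graph deviation, the Taylor linearization of the area element on $\Gamma_{f_h}$, and the bulk contribution in \eqref{E-L}; the quantitative comparison $\mathcal{D}+r\leq M\mathbf{e}$ imposed in \eqref{3} is precisely what makes the bulk term vanish at the blow-up scale, so that the linear theory for harmonic functions can be invoked unperturbed.
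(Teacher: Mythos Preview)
Your strategy matches the paper's: blow-up by contradiction, Lipschitz approximation, harmonicity of the limit via the weak Euler--Lagrange inequality, then harmonic decay combined with the reverse Poincar\'e inequality to close. Two technical points in your write-up would, as stated, make the argument fail.

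First, setting $g_h=f_h/\sqrt{\varepsilon_h}$ does \emph{not} yield a sequence bounded in $H^1(B'_{1/2})$ when $n\ge3$: the height bound only gives $|f_h|\le C\varepsilon_h^{1/(2(n-1))}$, so $\|g_h\|_{L^2}$ can blow up like $\varepsilon_h^{(2-n)/(2(n-1))}$. You must center $f_h$, as the paper does, taking $g_h=(f_h-a_h)/\sqrt{\varepsilon_h}$ with $a_h=\fint_{B'_{1/2}}f_h$, so that Poincar\'e supplies the $L^2$ bound from the gradient bound. Relatedly, the test field $X=\varphi e_n$ with $\varphi\in C_c^1(B'_{1/2})$ depends only on $x'$ and is not compactly supported in $B_1\subset\mathbb{R}^n$; the paper multiplies by a cutoff $\psi(x_n)$ supported in a thin slab containing $\partial E_h\cap B_{1/2}$ (available by the height bound), so that $X=\phi\psi e_n\in C_c^1(B_1)$.

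Second, in the reverse Poincar\'e step you take $\nu_h$ to be the minimizer of $\mathbf e(0,2\tau,\cdot)$ and then assert that on $\Gamma_{f_h}$ the height integral becomes $\int_{B'_{2\tau}}|f_h-\ell_h|^2$ for ``a suitable affine $\ell_h$''. But the reverse Poincar\'e allows optimization only over the constant $c$, not over $\nu$; on the graph one gets $\nu_{h,n}^2\,|f_h-\ell_h|^2$ with $\ell_h$ having \emph{fixed} slope $-\nu_h'/\nu_{h,n}$. With an abstract excess-minimizing $\nu_h$ you have no control tying this slope to the one dictated by the harmonic limit $g$, so you cannot invoke the Campanato decay $\int_{B'_{2\tau}}|g-g(0)-\langle\nabla'g(0),x'\rangle|^2\le C\tau^{n+3}$. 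The paper instead \emph{defines} $\nu_h$ and $c_h$ explicitly from the averages $(\nabla'f_h)_{2\tau}$ and $(f_h)_{2\tau}$, so that on $\Gamma_{f_h}$ the integrand is exactly (a factor $\to1$ times) $|f_h-(f_h)_{2\tau}-\langle(\nabla'f_h)_{2\tau},x'\rangle|^2$, and then verifies separately that $\mathbf e_h(0,2\tau,\nu_h)\to0$ so the reverse Poincar\'e hypothesis is met. With these fixes your outline coincides with the paper's proof.
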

\begin{proof}
Without loss of generality we may assume that $\tau<\frac{1}{8}$. Let us rescale 
and assume by contradiction that there exists an infinitesimal sequence $\lbrace\varepsilon_h\rbrace_{h\in\N}\subseteq\R^+$, a sequence $\lbrace r_h\rbrace_{h\in\N}\subseteq\R^+$ and a sequence $\lbrace (u_h,E_h) \rbrace_{h\in\N}$ of $\Lambda r_h$-minimizers of $\mathcal{F}_{r_h}$ in $B_1$, with equibounded energies, such that, denoting by ${\mathbf e}_h$ the excess of $E_h$ and by $\mathcal{D}_h$  the rescaled Dirichlet integral of $u_h$, we have
\begin{equation*}
{\mathbf e}_h(0,1)=\varepsilon_h, \quad \mathcal{D}_h(0,1)+r_h\leq M\varepsilon_h
\end{equation*}
and
\begin{equation*}
{\mathbf e}_h(0,\tau)>C_5(\tau^2{\mathbf e}(0,1)+\mathcal{D}(0,2\tau)+\tau r_h),
\end{equation*}
with some positive constant $C_5$ to be chosen.
Up to rotating each $E_h$ we may also assume that for all $h\in\N$
\begin{equation*}
{\mathbf e}_h(0,1)=\frac{1}{2}\int_{\dd E_h\cap B_1}\abs{\nu_{E_h}-e_n}^2\, d\mathcal{H}^{n-1}.
\end{equation*}
\textbf{Step 1.} Thanks to the Lipschitz approximation theorem, for $h$ sufficiently large, there exists a 1-Lipschitz function $f_h\colon\R^{n-1}\rightarrow\R$ such that
\begin{equation}
\label{1}
\sup_{\R^{n-1}}\abs{f_h}\leq C_3\varepsilon_h^{\frac{1}{2(n-1)}}, \quad \mathcal{H}^{n-1}((\dd E_h\Delta\Gamma_{f_h})\cap B_{\frac{1}{2}})\leq C_3\varepsilon_h, \quad \int_{B_{\frac{1}{2}}^{n-1}}\abs{\D' f_h}^2\,dx'\leq C_3\varepsilon_h.
\end{equation}
We define
\begin{equation*}
g_h(x'):=\frac{f_h(x')-a_h}{\sqrt{\varepsilon_h}}, \quad\text{where}\quad a_h=\int_{B_{\frac{1}{2}}^{n-1}}f_h\,dx'
\end{equation*}
and we assume up to a subsequence that $\lbrace g_h \rbrace_{h\in\N}$ converges weakly in $H^1(B_{\frac{1}{2}}^{n-1})$ and strongly in $L^2(B_{\frac{1}{2}}^{n-1})$ to a function $g$.\\
We prove that $g$ is harmonic in $B_{\frac{1}{2}}^{n-1}$. It's enough show that
\begin{equation}
\label{2}
\lim_{h\rightarrow +\infty}\frac{1}{\sqrt{\varepsilon_h}}\int_{B_{\frac{1}{2}}^{n-1}}\frac{\langle \D' f_h,\D'\phi\rangle}{\sqrt{1+\abs{\D' f_h}^2}}\,dx'=0,
\end{equation}
for all $\phi\in C_0^1(B_{\frac{1}{2}}^{n-1})$; indeed, if $\phi\in C_0^1(B_{\frac{1}{2}}^{n-1})$, by weak convergence we have
\begin{equation*}
\begin{split}
& \int_{B_{\frac{1}{2}}^{n-1}} \langle\D' g,\D'\phi\rangle\,dx' =
\lim_{h\rightarrow +\infty}\frac{1}{\sqrt{\varepsilon_h}}\int_{B_{\frac{1}{2}}^{n-1}}\langle\D' f_h,\D'\phi\rangle\,dx'\\
& = \lim_{h\rightarrow +\infty}\frac{1}{\sqrt{\varepsilon_h}}\bigg\lbrace \int_{B_{\frac{1}{2}}^{n-1}}\frac{\langle \D 'f_h,\D'\phi\rangle}{\sqrt{1+\abs{\D' f_h}^2}}\,dx'+\int_{B_{\frac{1}{2}}^{n-1}} \bigg[ \langle \D' f_h,\D'\phi\rangle - \frac{\langle \D' f_h,\D'\phi\rangle}{\sqrt{1+\abs{\D' f_h}^2}}\bigg]\,dx' \bigg\rbrace.
\end{split}
\end{equation*}
Using the lipschitz-continuity of $f_h$ and the third equation in \eqref{1} we infer that the second term in the previous equality is infinitesimal:
\begin{equation*}
\begin{split}
& \limsup_{h\rightarrow +\infty}\frac{1}{\sqrt{\varepsilon_h}}\bigg |\int_{B_{\frac{1}{2}}^{n-1}} \bigg [ \langle \D' f_h,\D'\phi\rangle - \frac{\langle \D' f_h,\D'\phi\rangle}{\sqrt{1+\abs{\D' f_h}^2}}\bigg]\,dx'\bigg |\\
& \leq \limsup_{h\rightarrow +\infty}\frac{1}{\sqrt{\varepsilon_h}} \int_{B_{\frac{1}{2}}^{n-1}} \abs{\D' f_h}\abs{\D'\phi}\frac{\sqrt{1+\abs{\D' f_h}^2}-1}{\sqrt{1+\abs{\D' f_h}^2}}\,dx'\\
& \leq \limsup_{h\rightarrow +\infty}\frac{1}{2\sqrt{\varepsilon_h}} \int_{B_{\frac{1}{2}}^{n-1}} \abs{\D'\phi}\abs{\D' f_h}^2\,dx'\leq \lim_{h\rightarrow +\infty}\frac{C_3\norm{\D'\phi}_\infty\sqrt{\varepsilon_h}}{2}=0.
\end{split}
\end{equation*}
Therefore, we can prove \eqref{2}. We fix $\delta>0$ so that spt$\phi\times [-2\delta,2\delta]\subseteq B_{\frac{1}{2}}$, choose a cut-off function $\psi\colon\R\rightarrow[0,1]$ with spt$\psi\subseteq (-2\delta,2\delta)$, $\psi=1$ in $(-\delta,\delta)$ and apply to $E_h$ the weak Euler-Lagrange equation with $X=\phi\psi e_n$. By the height bound, for $h$ sufficiently large it holds that $\dd E_h\cap B_{\frac{1}{2}}\subseteq B_{\frac{1}{2}}^{n-1}\times (-\delta,\delta)$. Plugging $X$ in the weak Euler-Lagrange equation and using the assumption in \eqref{3}, we have
\begin{equation*}
\begin{split}
&-\gamma\int_{\dd E_h\cap B_{\frac{1}{2}}}\langle \nu_{E_h}, e_n\rangle \langle \D'\phi,\nu_{E_h}' \rangle\,d\mathcal{H}^{n-1}\\
& \leq c(N,L_D,\phi,\psi)\int_{B_{\frac{1}{2}}} \big(|\D u_h|^2+r_h\big)\,dx+\Lambda r_h\int_{\dd E_h\cap B_{\frac{1}{2}}}\abs{\phi\psi}\,d\mathcal{H}^{n-1}\\
& \leq c(n,N,\Lambda,L_D,\phi,\psi)\varepsilon_h.
\end{split}
\end{equation*}
Therefore, if we replace $\phi$ by $-\phi$, we infer
\begin{equation}
\label{5}
\lim_{h\rightarrow +\infty}\frac{1}{\sqrt{\varepsilon_h}} \int_{\dd E_h\cap B_{\frac{1}{2}}}\langle \nu_{E_h}, e_n\rangle \langle \D'\phi,\nu_{E_h}' \rangle\,d\mathcal{H}^{n-1}=0.
\end{equation}
Decomposing $\dd E_h\cap B_{\frac{1}{2}}=\big([\Gamma_{f_h}\cup(\dd E_h\setminus\Gamma_{f_h})]\setminus(\Gamma_{f_h}\setminus\dd E_h)\big)\cap B_{\frac{1}{2}}$, we deduce
\begin{equation}
\label{4}
\begin{split}
& -\frac{1}{\sqrt{\varepsilon_h}}\int_{\dd E_h\cap B_{\frac{1}{2}}}\langle \nu_{E_h}, e_n\rangle \langle \D'\phi,\nu_{E_h}' \rangle\,d\mathcal{H}^{n-1}
 =\frac{1}{\sqrt{\varepsilon_h}}\bigg[-\int_{ \Gamma_{f_h}\cap B_{\frac{1}{2}}}\langle \nu_{E_h}, e_n\rangle \langle \D'\phi,\nu_{E_h}' \rangle\,d\mathcal{H}^{n-1}\\
& - \int_{(\dd E_h\setminus \Gamma_{f_h})\cap B_{\frac{1}{2}}}\langle \nu_{E_h}, e_n\rangle \langle \D'\phi,\nu_{E_h}' \rangle\,d\mathcal{H}^{n-1}+\int_{(\Gamma_{f_h}\setminus \dd E_h)\cap B_{\frac{1}{2}}}\langle \nu_{E_h}, e_n\rangle \langle \D'\phi,\nu_{E_h}' \rangle\,d\mathcal{H}^{n-1}\bigg].
\end{split}
\end{equation}
Since by the second inequality in \eqref{1} we have
\begin{equation*}
\bigg |\frac{1}{\sqrt{\varepsilon_h}} \int_{(\dd E_h\setminus \Gamma_{f_h})\cap B_{\frac{1}{2}}}\langle \nu_{E_h}, e_n\rangle \langle \D'\phi,\nu_{E_h}' \rangle\,d\mathcal{H}^{n-1} \bigg | \leq C_3\sqrt{\varepsilon_h}\sup_{\R^{n-1}}\abs{\D'\phi},
\end{equation*}
\begin{equation*}
\bigg |\frac{1}{\sqrt{\varepsilon_h}} \int_{(\Gamma_{f_h}\setminus \dd E_h)\cap B_{\frac{1}{2}}}\langle \nu_{E_h}, e_n\rangle \langle \D'\phi,\nu_{E_h}' \rangle\,d\mathcal{H}^{n-1} \bigg | \leq C_3\sqrt{\varepsilon_h}\sup_{\R^{n-1}}\abs{\D'\phi},
\end{equation*}
then by \eqref{5} and the area formula, we infer
\begin{equation*}
0=\lim_{h\rightarrow +\infty}\frac{-1}{\sqrt{\varepsilon_h}}\int_{ \Gamma_{f_h}\cap B_{\frac{1}{2}}}\langle \nu_{E_h}, e_n\rangle \langle \D'\phi,\nu_{E_h}' \rangle\,d\mathcal{H}^{n-1}=\lim_{h\rightarrow +\infty}\frac{1}{\sqrt{\varepsilon_h}}\int_{B_{\frac{1}{2}}^{n-1}}\frac{\langle \D' f_h,\D'\phi\rangle}{\sqrt{1+\abs{\D' f_h}^2}}\,dx'.
\end{equation*}
This proves that $g$ is harmonic.\\
\textbf{Step 2.} The proof of this step now follows  exactly as in [FJ] using the eight bound lemma and the revers Poincar\'{e} inequality. We give it here to be thorough. By the mean value property of harmonic functions, Lemma 25.1 in \cite{Ma}, Jensen inequality, semicontinuity and the third inequality in \eqref{1} we deduce that
\begin{equation*}
\begin{split}
& \lim_{h\rightarrow +\infty}\frac{1}{\varepsilon_h}\int_{B_{2\tau}^{n-1}}\abs{f_h(x')-(f_h)_{2\tau}-\langle(\D' f_h)_{2\tau},x'\rangle}^2\,dx'\\
& =\int_{B_{2\tau}^{n-1}}\abs{g(x')-(g)_{2\tau}-\langle(\D' g)_{2\tau},x'\rangle}^2\,dx'\\
& = \int_{B_{2\tau}^{n-1}}\abs{g(x')-g(0)-\langle\D' g(0),x'\rangle}^2\,dx'\\
&\leq c(n)\tau^{n-1}\sup_{x'\in B_{2\tau}^{n-1}}\abs{g(x')-g(0)-\langle\D' g(0),x'\rangle}^2\\
& \leq c(n)\tau^{n+3}\int_{B_{\frac{1}{2}}^{n-1}}\abs{\D' g}^2\,dx'\leq c(n)\tau^{n+3}\liminf_{h\rightarrow +\infty}\int_{B_{\frac{1}{2}}^{n-1}}\abs{\D' g_h}^2\,dx'\\
& \leq \tilde{C}(n,C_3)\tau^{n+3}.
\end{split}
\end{equation*}
On one hand, using the area formula, the mean value property, the previous inequality and setting
\begin{equation*}
c_h:=\frac{(f_h)_{2\tau}}{\sqrt{1+\abs{(\D' f_h)_{2\tau}}^2}}, \quad \nu_h:=\frac{(-(\D' f_h)_{2\tau},1)}{\sqrt{1+\abs{(\D' f_h)_{2\tau}}^2}},
\end{equation*}
we have
\begin{equation*}
\begin{split}
&\limsup_{h\rightarrow +\infty}\frac{1}{\varepsilon_h}\int_{\dd E_h\cap\Gamma_{f_h}\cap B_{2\tau}}\abs{\langle\nu_h,x\rangle-c_h}^2\,d\mathcal{H}^{n-1}\\
& = \limsup_{h\rightarrow +\infty}\frac{1}{\varepsilon_h}\int_{\dd E_h\cap\Gamma_{f_h}\cap B_{2\tau}}\frac{\abs{\langle -(\D' f_h)_{2\tau},x'\rangle+f_h(x')-(f_h)_{2\tau}}}{1+\abs{(\D' f_h)_{2\tau}}^2}^2\sqrt{1+\abs{\D 'f_h(x')}^2}\,dx'\\
& \leq \lim_{h\rightarrow +\infty}\frac{1}{\varepsilon_h}\int_{B_{2\tau}^{n-1}}\abs{f_h(x')-(f_h)_{2\tau}-\langle(\D' f_h)_{2\tau},x'\rangle}^2\,dx'\leq \tilde{C}(n,C_3)\tau^{n+3}.
\end{split}
\end{equation*}
On the other hand, arguing as in Step 1, we immediately get from the height bound and the first two inequalities in \eqref{1} that
\begin{equation*}
\lim_{h\rightarrow +\infty}\frac{1}{\varepsilon_h}\int_{(\dd E_h\setminus\Gamma_{f_h})\cap B_{2\tau}}\abs{\langle\nu_h,x\rangle-c_h}^2\,d\mathcal{H}^{n-1}=0.
\end{equation*}
Hence we conclude that
\begin{equation}
\label{6}
\limsup_{h\rightarrow +\infty}\frac{1}{\varepsilon_h}\int_{\dd E_h\cap B_{2\tau}}\abs{\langle\nu_h,x\rangle-c_h}^2\,d\mathcal{H}^{n-1}\leq \tilde{C}(n,C_3)\tau^{n+3}.
\end{equation}
We claim that the sequence $\lbrace {\mathbf e}_h(0,2\tau,\nu_h) \rbrace_{h\in\N}$ is infinitesimal; indeed, by the definition of excess, Jensen'2 inequality and the third inequality in \eqref{1} we have
\begin{equation*}
\begin{split}
& \limsup_{h\rightarrow +\infty}\int_{\dd E_h\cap B_{2\tau}}\abs{\nu_{E_h}-\nu_h}^2\,d\mathcal{H}^{n-1}\\
& \leq\limsup_{h\rightarrow +\infty}\bigg[ 2\int_{\dd E_h\cap B_{2\tau}}\abs{\nu_{E_h}-e_n}^2\,d\mathcal{H}^{n-1}+2\abs{e_n-\nu_h}^2\mathcal{H}^{n-1}(\dd E_h\cap B_{2\tau})\bigg]\\
& \leq \limsup_{h\rightarrow +\infty}\bigg[4\varepsilon_h+2\mathcal{H}^{n-1}(B_{2\tau})\frac{\abs{((\D' f_h)_{2\tau},\sqrt{1+\abs{(\D' f_h)_{2\tau}}^2}-1)}^2}{1+\abs{(\D' f_h)_{2\tau}}^2}\bigg]\\
& \leq \limsup_{h\rightarrow +\infty}\big[4\varepsilon_h+4\mathcal{H}^{n-1}(B_{2\tau})\abs{(\D' f_h)_{2\tau}}^2\big]\\
& \leq \limsup_{h\rightarrow +\infty}\bigg[ 4\varepsilon_h+4\int_{B_{\frac{1}{2}}^{n-1}}\abs{\D' f_h}^2\,dx' \bigg]\leq \lim_{h\rightarrow +\infty}[4\varepsilon_h+4C_3\varepsilon_h]=0.
\end{split}
\end{equation*}
Therefore applying the reverse Poicaré's inequality and \eqref{6} we have for $h$ large that
\begin{equation*}
\begin{split}
{\mathbf e}_h(0,\tau)\leq {\mathbf e}_h(0,\tau,\nu_h)\leq C_4(\tilde{C}\tau^2{\mathbf e}_h(0,1)+\mathcal{D}(0,2\tau)+2\tau r_h),
\end{split}
\end{equation*}
which is a contradiction if we choose $C_5>C_4\max\lbrace\tilde{C},2\rbrace$.
\end{proof}

\section{Proof of the main theorem}
The proof works exactly as in \cite{FJ}. We give here some details to emphasize the dependence of $\varepsilon$ appearing in the statement of Theorem \ref{Teorema principale} from the structural data of the functional. The proof is divided in four steps.\\
\textbf{Step 1.} For every $\tau\in(0,1)$ there exists $\varepsilon_6=\varepsilon_6(\tau)>0$ such that if ${\mathbf e}(x,r)\leq\varepsilon_6$ then
\begin{equation*}
\mathcal{D}(x,\tau r)\leq C_0\tau\mathcal{D}(x,r),
\end{equation*}
where $C_0$ is from Lemma \ref{Lemma decadimento 1}. Assume by contradiction that for some $\tau\in(0,1)$ there exist two positive sequences $\varepsilon_h$ and $r_h$ and a sequence $(u_h,E_h)$ of $\Lambda r_h-$ minimizers of $\mathcal{F}_{r_h}$ in $B_1$ with equibounded energies, such that, denoting by ${\mathbf e}_h$ the excess of $E_h$ and by $\mathcal{D}_h$ the rescaled Dirichlet integral of $u_{E_h}$, we have that $0\in\dd E_h$,
\begin{equation}
\label{Eqn 7}
{\mathbf e}_h(0,1)=\varepsilon_h\rightarrow 0 \quad\text{and}\quad \mathcal{D}_h(0,\tau)>C_0\tau\mathcal{D}_h(0,1).
\end{equation}
Thanks to the energy upper bound (Theorem \ref{Energy upper bound}) and the compactness lemma (Lemma \ref{Lemma compattezza}) we may assume that $E_h\rightarrow E$ in $L^1(B_1)$ and $0\in\dd E$. Since, by lower semicontinuity, the excess of $E$ at 0 is null, $E$ is a half space in $B_1$, say $H$. In particular, for $h$ large it holds
\begin{equation*}
|(E_h\Delta H)\cap B_1|<\varepsilon_0(\tau)|B_1|,
\end{equation*}
where $\varepsilon_0$ is from Lemma \ref{Lemma decadimento 1}, which gives a contradiction with the inequality \eqref{Eqn 7}.\\
\textbf{Step 2.} Let $U\subset\subset\Omega$ be an open set. Prove that for every $\tau\in(0,1)$ there exist two positive constants $\varepsilon=\varepsilon(\tau,U)$ and $C_6$ such that if $x_0\in\dd E$, $B_r(x_0)\subset U$ and ${\mathbf e}(x_0,r)+\mathcal{D}(x_0,r)+r<\varepsilon$ then
\begin{equation}
\label{Eqn 8}
{\mathbf e}(x_0,\tau r)+\mathcal{D}(x_0,\tau r)+\tau r\leq C_6\tau({\mathbf e}(x_0,r)+\mathcal{D}(x_0,r)+r).
\end{equation}
Fix $\tau\in(0,1)$ and assume without loss of generality that $\tau<\frac{1}{2}$ We can distinguish two cases. \\
\textit{Case 1:} $\mathcal{D}(x_0,r)+r\leq \tau^{-n}{\mathbf e}(x_0,r)$. If ${\mathbf e}(x_0,r)<\min\{ \varepsilon_5 (\tau,\tau^{-n}),\varepsilon_6(2\tau)\}$ it follows from Theorem \ref{Miglioramento eccesso} and Step 1 that
\begin{equation*}
\begin{split}
{\mathbf e}(x_0,\tau r)\leq C_5(\tau^2{\mathbf e}(x_0,r)+\mathcal{D}(x_0,2\tau r)+\tau r)\leq C_5(\tau^2{\mathbf e}(x_0,r)+2C_0\mathcal{D}(x_0,\tau r)+\tau r)
\end{split}.
\end{equation*}
\textit{Case 2:} ${\mathbf e}(x_0,r)\leq\tau^{n}(\mathcal{D}(x_0,r)+r)$. By the property of the excess at different scales, we infer
\begin{equation*}
{\mathbf e}(x_0,\tau r)\leq \tau^{1-n}{\mathbf e}(x_0,r)\leq \tau^{n}(\mathcal{D}(x_0,r)+r).
\end{equation*}
We conclude that choosing $\varepsilon=\min\{ \varepsilon_5(\tau,\tau^{-n}),\varepsilon_6(2\tau),\varepsilon_6(\tau) \}$, inequality \eqref{Eqn 8} is verified.\\
\textbf{Step 3.} Fix $\sigma\in(0,\frac{1}{2})$ and choose $\tau_0\in(0,1)$ such that $C_6\tau_0\leq\tau_0^{2\sigma}$. Let $U\subset\subset\Omega$ be an open set. We define
\begin{equation*}
\begin{split}
\Gamma\cap U:=
&\{ x\in\dd E\cap U\,:\, {\mathbf e}(x,r)+\mathcal{D}(x,r)+r<\varepsilon(\tau_0,U)\\
& \text{ for some }r>0\text{ such that }B_r(x_0)\subset U \}.
\end{split}
\end{equation*}
Note that $\Gamma\cap U$ is relatively open in $\dd E$. We show that $\Gamma\cap U$ is a $C^{1,\sigma}$-hypersurface. Indeed inequality \eqref{Eqn 8} implies via standard iteration argument that if $x_0\in\Gamma\cap U$ there exist $r_0>0$ and a neighborhood $V$ of $x_0$ such that for every $x\in\dd E\cap V$ it holds
\begin{equation*}
{\mathbf e}(x,\tau_0^k r_0)+\mathcal{D}(x,\tau_0^k r_0)+\tau_0^k r_0\leq \tau_0^{2\sigma k}, \quad\text{for }k\in\N_0.
\end{equation*}
In particular ${\mathbf e}(x,\tau_0^k r_0)\leq \tau_0^{2\sigma k}$ and, arguing as in \cite{FJ}, we obtain that for every $x\in\dd E\cap V$ and $0<s<t<r_0$ it holds
\begin{equation*}
|(\nu_E)_s(x)-(\nu_E)_t(x)|\leq ct^\sigma,
\end{equation*}
for some constant $c=c(n,\tau_0,r_0)$, where
\begin{equation*}
(\nu_E)_t(x)=\fint_{\dd E\cap B_t(x)}\nu_E(y)\,d\mathcal{H}^{n-1}.
\end{equation*}
The previous estimate first implies that $\Gamma\cap U$ is $C^1$. By a standard argument we then deduce again from the same estimate that $\Gamma\cap U$ is a $C^{1,\sigma}$-hypersurface. Finally we define $\Gamma:=\cup_i(\Gamma\cap U_i)$, where $(U_i)_i$ is an increasing sequence of open sets such that $U_i\subset\subset\Omega$ and $\Omega=\cup_i U_i$.\\
\textbf{Step 4.} Finally we prove that there exists $\eta>0$ such that
\begin{equation*}
\mathcal{H}^{n-1-\eta}(\dd E\setminus \Gamma)=0.
\end{equation*}
Setting $\Sigma=\{ x\in\dd E\setminus\Gamma\,:\, \lim\limits_{r\rightarrow 0}\mathcal{D}(x,r)=0 \}$, by Lemma \ref{Lemma maggiore sommabilità}, $\D u\in L^{2s}_{loc}(\Omega)$ for some $s>1$, depending only on $\nu,N,L,n$, and we have that
\begin{equation*}
\text{dim}_{\mathcal{H}}\big(\{x\in\Omega\,:\, \limsup_{r\rightarrow 0}\mathcal{D}(x,r)>0 \}\big)\leq n-s.
\end{equation*}
The conclusion follows as in \cite{FJ} showing that $\Sigma=\emptyset$ when $n\leq 7$ and $\text{dim}_\mathcal{H}(\Sigma)\leq n-8$ for $n\geq 8$.



\begin{thebibliography}{9}


\bibitem{AB} L. Ambrosio and G. Buttazzo, An optimal design problem with perimeter penalization, \emph{Calc. Var. Part. Diff. Eq.} \textbf{1} (1993), 55-69.

\bibitem{AC} H. W. Alt and L. A. Caffarelli, Existence and regularity for a minimum problem with free boundary, \emph{J.Reine Angew. Math.} \textbf{325} (1981), 107-144.

\bibitem{AFP} L. Ambrosio, N. Fusco and D. Pallara, \emph{Functions of Bounded Variation and Free Discontinuity Problems}, 1st ed., Oxford University Press, New York, 2000.

\bibitem{Bom} E. Bombieri, Regularity theory for almost minimal currents, \emph{Arch. Rational Mech. Anal.} {\bf 78} (1982), 99-130.

\bibitem{CFP} M. Carozza, I. Fonseca and A. Passarelli Di Napoli, Regularity results for an optimal design problem with a volume constraint, \emph{ESAIM: COCV}, {\bf 20  no. 2} (2014), 460-487.

\bibitem{Dav} G. David, $C^1$-arcs for minimizers of the Mumford-Shah functional, \emph{SIAM J. Appl. Math.} {\bf 56} (1996), 783-888.

\bibitem{DF} G. De Philippis and A. Figalli, A note on the dimension of the singular set in free interface problems, \emph{Differ. Integral Equ.} {\bf 28} (2015), 523-536.

\bibitem{EF} L. Esposito and N. Fusco, A remark on a free interface problem with volume constraint, \emph{J. Convex Anal.} {\bf 18 n.2} (2011), 417-426.

\bibitem{FF} I. Fonseca and N. Fusco, Regularity results for anisotropic image segmentation models, \emph{Ann. Sc. Norm. Super. Pisa} {\bf 24} (1997), 463-499.

\bibitem{FFLM} I. Fonseca, N. Fusco, G. Leoni and M. Morini,  Equilibrium configurations of epitaxially strained crystalline films:existence and regularity results, \emph{Arch. Rational Mech. Anal.} {\bf 186} (2007), 477-537.

\bibitem{FJ} N. Fusco and V. Julin, On the regularity of critical and minimal sets of a free interface problem, \emph{Interfaces Free Boundaries} \textbf{17 (1)} (2015),  117-142. 

\bibitem{Giu} E. Giusti,
\emph{Direct methods in the calculus of variations},
World Scientific Publishing Co., Inc., River Edge, NJ, 2003.

\bibitem{Gur} M. Gurtin, On phase transitions with bulk, interfacial, and boundary energy, \emph{Arch. Rational Mech. Anal.} {\bf 96} (1986), 243-264.

\bibitem{JP} V. Julin and G. Pisante, Minimality via second variation for microphase separation of diblock copolymer melts, \emph{J. fur Reine Angew. Math} {\bf 729} (2017), 81-117.

\bibitem{KM} J. Kristensen and G. Mingione,  The singular set of lipschitzian minima of multiple integrals, \emph{Arch. Ration. Mech. Anal.} {\bf 184} (2007), 341-369.

\bibitem{Lar} C. J. Larsen, Regularity of componrnts in optimal design problems with perimeter penalization, \emph{Calc. Var. Part. Diff. Eq.} {\bf 16} (2003), 17-29.

\bibitem{Lin} F. H. Lin, Variational problems with free interfaces, \emph{Calc. Var. Part. Diff. Eq.} {\bf 1} (1993), 149-168.

\bibitem{LH} H. Li, T. Halsey and A. Lobkovsky,
Singular shape of a fluid drop in an electric or magnetic field, \emph{Europhys. Lett.} {\bf 27} (1994), 575-580.

\bibitem{LK} F. H. Lin and R. V. Kohn, Partial regularity for optimal design problems involving both bulk and surface
energies, \emph{Chin. Ann. of Math.} {\bf 20B} (1999), 137-158

\bibitem{Ma} F. Maggi, \emph{Sets of finite perimeter and geometric variational problems. An introduction to geometric measure theory},  Cambridge Studies in Advanced Mathematics, 135. Cambridge University Press,Cambridge, 2012.

\bibitem{MS} F. Maddalena and S. Solimini, Regularity properties of free discontinuity sets.
\emph{Ann. Inst. H. Poincar\'{e} Anal. Non
Lin \'{e}aire} {\bf 18}
(2001), 675-685.

\bibitem{MV} E. Mukoseeva and G. Vescovo, Minimality of the ball for a model of charged liquid droplets, https://arxiv.org/abs/1912.07092

\bibitem{Sim}
L. Simon, \emph{Lectures on geometric measure theory}, Proceedings of the Centre for Mathematics and its Applications, Volume 3, Australian National University, Centre for Mathematical Analysis, Canberra, 1983.

\bibitem{SY} V. \^{S}ver\'{a}k and X. Yan, Non-Lipschitz minimizers of smooth uniformly convex variational integrals, \emph{Proc. Nat. Acad. Sci. USA} {\bf 99} (2002), 15269-15276.

\bibitem{Tam} I. Tamanini, Boundaries of Caccioppoli sets with H\"older-continuous normal vector, \emph{J. Reine Angew. Math.} {\bf 334} (1982), 27-39.

\bibitem{Tay} G. I. Taylor, Disintegration of water drops in an electric field, \emph{Proc. R. Soc. Lond. A}
{\bf 280} (1964), 383-397.
\end{thebibliography}

\end{document}